\documentclass[11pt,a4paper]{article}
\usepackage{amssymb,amsmath,amsthm,bm}
\usepackage{graphicx}
\usepackage{geometry}
\usepackage{float}
\usepackage{tikz}
\graphicspath{{./figures/}}
\usepackage[maxbibnames=99]{biblatex}
\addbibresource{mybib.bib}

\newtheorem*{teo*}            {Theorem}
\newtheorem{teo}            {Theorem}
\newtheorem{lema}     [teo]{Lemma}
\newtheorem{defin}[teo]{Definition}
\newtheorem{corollary} [teo]{Corollary}

\newtheorem{example} [teo]          {Example}
\newtheorem{obs} [teo]           {Remark}
\newtheorem{prop} [teo]       {Proposition}

\DeclareMathOperator{\pos}{pos}
\DeclareMathOperator{\Spec}{Spec}
\DeclareMathOperator{\ric}{Ric}
\DeclareMathOperator{\Ric}{Ric}

\title{Ricci curvature of Bruhat orders}
\author{Viola Siconolfi\ \footnotemark[1]}

\begin{document}
\maketitle
\footnotetext[1]{
Fakult\"at f\"ur Mathematik, Universit\"at Bielefeld, Germany.
\textit{Email address: }\texttt{vsiconolf@math.uni-bielefeld.de}.}

\begin{abstract}
We study the Ricci curvature of the  Hasse diagrams of the  Bruhat order of  finite irreducible Coxeter groups. For this purpose we compute the maximum degree of these graphs for types $B_n$ and $D_n$. The proof uses a new graph $\Gamma(\pi)$ defined for any element $\pi$ in the corresponding group.
\end{abstract}

\section{Introduction}
The study of discrete analogues of concepts from differential geometry is a wide spread research topic, that allows to work with the definition of curvature of graphs (\cite{bre1},\cite{bre8},\cite{bre10},\\
\cite{bre11},\cite{bre12},\cite{bre14}) with applications to heat and Laplacian operators (\cite{bre3},\cite{heat}),  isoperimetry (\cite{bre17},\cite{isop}) and other inequalities (\cite{bre2},\cite{bre4}, \cite{bre7},\cite{bre5},\cite{bre9},\cite{bre16}).

In particular, a definition of Ricci curvature for graphs was proposed in \cite{shmuck} and further studied in \cite{klart}. The study of Ricci curvature for graphs is still an elusive issue, among the difficulties in this framework there is the lack of examples and explicit computations that assist our intuition.

In \cite{sic},\cite{sic2} and \cite{sic3} the Ricci curvature  of Bruhat graphs of Coxeter groups and of the Hasse graphs of the weak orders of Coxeter and affine Weyl groups is studied.

These results lead to applications such as a lower bound for the spectral gap of the graphs considered and isoperimetric inequalities.


In this work we consider the Ricci curvature of the Hasse graphs of the Bruhat order of finite irreducible Coxeter systems. This is a considerably harder problem that those studied by the same author on \cite{sic2} and \cite{sic3} because not all vertices are 'isomorphic' in these graphs.

Our approach is to apply to the Bruhat orders some results obtained in \cite{sic3} bound the Ricci curvature of a graph in terms of the degree of its vertices. 

More precisely one needs to compute the maximal degree of the vertices of these graphs.
 Such a result is known  in type $A_n$, (see \cite{adinroich})



A sustantial part od this paper is devoted to the proof of the corresponding result in types $B$ and $D$, more precisely we obtain the following theorems
 whose proofs appear in Subsections \ref{dmaxbn} and \ref{dmaxdn} :
\begin{teo*}
The maximum degree of the Hasse diagram of the strong Bruhat order in $B_n$ with $n\geq 5$ is 
$\lfloor \frac{n^2}{2}\rfloor+n-1.$
\end{teo*}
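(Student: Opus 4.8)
The plan is to reformulate the statement as an extremal problem about a graph invariant and then carry out a finite optimisation, exactly as the introduction anticipates with the graph $\Gamma(\pi)$.

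First I would record that the degree of $\pi$ in the Hasse diagram is the number of \emph{covering reflections} of $\pi$, i.e.\ the reflections $t \in B_n$ with $|\ell(\pi t) - \ell(\pi)| = 1$: every Hasse edge at $\pi$ is $\{\pi,\pi t\}$ for a unique reflection $t$, distinct reflections give distinct edges, and covers in Bruhat order are always obtained by reflections. The reflections of $B_n$ come in three families: the $n$ sign changes $s_i$, and the $2\binom n2 = n^2-n$ transposition--type reflections $t^{+}_{ij}$ and $t^{-}_{ij}$ ($1\le i<j\le n$); in total $n^2$. I would encode the covering reflections of $\pi$ in a graph $\Gamma(\pi)$ on vertex set $\{1,\dots,n\}$: a loop at $i$ for each covering sign change $s_i$, and a ($+$ or $-$)--labelled edge $\{i,j\}$ for each covering reflection $t^{\pm}_{ij}$, so that the degree of $\pi$ in the Hasse diagram equals the total number of edges of $\Gamma(\pi)$ (each loop and each labelled edge counted once). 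The first real step is a combinatorial description of these edges. Using the embedding $B_n\hookrightarrow S_{2n}$ given by the doubled word $\widetilde\pi$ on positions $-n,\dots,-1,1,\dots,n$ and the identity $\ell(\pi)=\tfrac12\big(\mathrm{inv}(\widetilde\pi)+\mathrm{neg}(\pi)\big)$, one computes the length change produced by each reflection and obtains: $s_i$ is a covering reflection if and only if $|\pi_i|<|\pi_k|$ for all $k<i$ (i.e.\ $i$ is a left--to--right minimum of $(|\pi_1|,\dots,|\pi_n|)$), while $t^{\pm}_{ij}$ is a covering reflection if and only if a prescribed axis--parallel rectangle determined by the two relevant points of the diagram of $\widetilde\pi$ contains no further point of that diagram. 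These ``empty rectangle'' conditions, together with the left--to--right minimum condition, are precisely the data $\Gamma(\pi)$ records.

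With this dictionary the theorem becomes $\max_{\pi\in B_n}|E(\Gamma(\pi))| = \lfloor n^2/2\rfloor + n - 1$. For the upper bound I would bound the number of loops by the number of left--to--right minima of $|\pi|$, and then establish the crucial \emph{trade--off}: the more left--to--right minima $|\pi|$ has (so the more the absolute values tend to decrease), the more staircase--like the diagram of $\widetilde\pi$ becomes, which destroys the emptiness of many of the rectangles governing the $t^{\pm}_{ij}$. Quantifying this, together with a case analysis on the sign pattern and on the order type of $(|\pi_1|,\dots,|\pi_n|)$, should yield $|E(\Gamma(\pi))|\le \lfloor n^2/2\rfloor+n-1$, the floor and the additive term reflecting boundary effects at positions $1$ and $n$ and a parity count in the rectangle conditions. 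For the lower bound I would exhibit, for every $n\ge 5$, an explicit signed permutation realising the bound — one whose absolute--value word has as many left--to--right minima as is compatible with keeping roughly half of the transposition--type reflections covering (for instance a suitable interleaving of a short decreasing block of small values with a decreasing block of large values of the opposite sign) — and verify its degree directly from the edge description of $\Gamma(\pi)$.

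The main obstacle is the upper bound: converting the per--reflection length--change formulas into the single global inequality requires controlling the simultaneous behaviour of all three families of covering reflections and handling the parity and boundary corrections exactly, which is why a dedicated combinatorial object such as $\Gamma(\pi)$ is needed rather than a naive count. I expect the hypothesis $n\ge 5$ to enter precisely here: for small $n$ the staircase obstruction is too weak, the extremal configuration is of a different shape (the maximum being closer to $n^2$), and the closed formula only stabilises to the stated one once $n$ is large enough.
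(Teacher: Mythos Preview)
Your setup coincides with the paper's: the graph $\Gamma(\pi)$ on $\{1,\ldots,n\}$ with loops and $\pm$-labelled edges is exactly what the paper defines, and your empty-rectangle description of the covering reflections matches the paper's proposition on covering relations in $B_n$. The explicit maximiser the paper exhibits is $[1,2,\ldots,m,-n,-(n-1),\ldots,-(m+1)]$ with $m=\lfloor n/2\rfloor$, which is of the two-block shape you anticipate.

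The divergence, and the gap, is in the upper bound. You propose a direct global trade-off: more left-to-right minima in $|\pi|$ (hence more loops) should force a more staircase-like diagram and thereby destroy many empty rectangles governing the $t^{\pm}_{ij}$. But you do not supply the quantitative step turning this heuristic into the exact inequality $|E(\Gamma(\pi))|\le\lfloor n^2/2\rfloor+n-1$, and you yourself flag this as the main obstacle. A case analysis ``on the sign pattern and the order type'' is not a proof unless the cases and the count in each case are specified, and there is no indication that such a direct global argument is feasible, let alone shorter than what the paper actually does.

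The paper bypasses global optimisation entirely by induction on $n$. Its key lemma is local: every $\Gamma(\pi)$ contains a vertex of degree at most $n+1$, and when $n\ge7$ is odd there is one of degree at most $n$. Deleting such a vertex from $\pi$ yields an element $\pi'\in B_{n-1}$ whose $\Gamma(\pi')$ has at least as many edges as the truncated $\Gamma(\pi)$, and the induction step goes through (the parity refinement for odd $n$ is exactly what makes the floor come out right); the base case $n=5$ is verified by computer. All the delicate casework is thus concentrated in the low-degree-vertex lemma, which analyses only the neighbourhood of $\pi(n)$ and of one further carefully chosen vertex $\pi(p)$ in an auxiliary graph $\tilde\Gamma(\pi)$ on $\{\pm1,\ldots,\pm n\}$; this localisation is what makes the argument tractable, and it is the idea your sketch is missing. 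Your guess about the hypothesis $n\ge5$ is correct: for $n\le5$ the maximum degree is $4(n-1)$, and the two formulas agree at $n=5$.
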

\begin{teo*}
The maximum degree of the Hasse diagram of the strong Bruhat order in $D_n$ is 
$\lfloor \frac{n^2}{2}\rfloor+n-1.$
\end{teo*}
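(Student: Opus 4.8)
The plan is to follow the same strategy as in the proof of the $B_n$ theorem (Subsection~\ref{dmaxbn}): reduce the computation of the maximal degree in the Hasse diagram to an extremal problem on the auxiliary graphs $\Gamma(\pi)$, and then solve that extremal problem while keeping track of the parity constraint on signs that distinguishes $D_n$ from $B_n$.

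First I would fix the combinatorial model of $D_n$ as the group of signed permutations of $\{\pm 1,\dots,\pm n\}$ with an even number of negative values in the window $[\pi(1),\dots,\pi(n)]$, together with the standard length formula. By the characterisation of the covering relation of the Bruhat order, the degree of a vertex $\pi$ in the Hasse diagram equals the number of reflections $t$ with $|\ell(\pi t)-\ell(\pi)|=1$, that is, the number of elements covering $\pi$ plus the number of elements covered by $\pi$ (the map $t\mapsto \pi t$ being injective, this count is exact). I would then recall the explicit pattern characterisation of these ``covering reflections'' for $D_n$: a reflection of type $t_{(i,j)}$ or $t_{(i,-j)}$ contributes exactly when the associated pair of signed window positions realises a consecutive step, i.e. an increase (respectively a decrease) with no intermediate value strictly in between. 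This identifies $\deg_{\mathrm{Hasse}}(\pi)$ with the cardinality of an explicit set of admissible pairs of positions.

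Second, I would show that this set of admissible pairs is exactly the edge set of $\Gamma(\pi)$, up to the explicit constant $c$ already isolated in Subsection~\ref{dmaxbn}, so that
\[
\deg_{\mathrm{Hasse}}(\pi)=|E(\Gamma(\pi))|+c ,
\]
and the theorem becomes: over all $\pi\in D_n$, the maximum of $|E(\Gamma(\pi))|$ is $\lfloor n^2/2\rfloor+n-1-c$. The core of the work is the upper bound on $|E(\Gamma(\pi))|$. One first describes which graphs can occur: the ``no intermediate value'' conditions forbid certain induced subconfigurations, so $\Gamma(\pi)$ lies in a restricted class — essentially the $B_n$ class with the edges coming from the $t_{(i,-i)}$-reflections removed and with the window subject to the even-sign condition. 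Within this class I would establish the bound $\lfloor n^2/2\rfloor+n-1-c$, either by a degree-sum estimate (bounding, for each index $i$, how many admissible pairs can contain $i$) or by induction on $n$, deleting the position carrying the extreme value and controlling how many edges can disappear; the $B_n$ estimate of Subsection~\ref{dmaxbn} provides both the template and, in most cases, an immediate majorant, the residual cases being exactly those in which $\pi$ is forced to flip a sign to satisfy the parity constraint. Third, I would exhibit an explicit extremal element $\pi\in D_n$ — a $D_n$-version of the $B_n$ maximiser of Subsection~\ref{dmaxbn}, modified in one entry so as to keep an even number of negative values — and check directly that every admissible pair for it is realised, so that $\Gamma(\pi)$ attains $\lfloor n^2/2\rfloor+n-1-c$ edges and hence $\deg_{\mathrm{Hasse}}(\pi)=\lfloor n^2/2\rfloor+n-1$; together with the upper bound this proves the theorem (the few small values of $n$ being dealt with by direct inspection).

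The main obstacle I anticipate is precisely this upper bound. One must control simultaneously the two shapes of covering reflections in $D_n$ together with the global even-sign constraint, and the delicate point is to rule out that sacrificing a sign in order to meet the parity condition could be compensated by gaining edges elsewhere in $\Gamma(\pi)$. Thus the extremal count cannot be read off the $B_n$ argument verbatim: it has to be organised as a case analysis on the sign pattern of $\pi$, in the spirit of — but somewhat more involved than — the type $A_n$ computation of \cite{adinroich}.
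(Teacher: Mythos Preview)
Your overall architecture matches the paper's: translate the degree of $\pi$ into the edge count of an auxiliary graph $\Gamma(\pi)$, prove an upper bound by induction on $n$ via deletion of a suitable vertex, and exhibit an explicit maximiser. Two points, however, need correction.

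First, the constant $c$ is spurious. In the paper's conventions (with $\Gamma(\pi)$ on $2n$ vertices $\{\pm1,\dots,\pm n\}$ and no loops, since $D_n$ has no reflections of type $(i,-i)$), the degree of $\pi$ in the Hasse diagram is exactly half the number of edges of $\Gamma(\pi)$; there is no additive shift.

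Second, and more importantly, you have misidentified the main obstacle. You anticipate that the even-sign constraint is the delicate point of the upper bound and plan a case analysis on the sign pattern of $\pi$. In fact the paper observes explicitly, at the end of the proof of the low-degree lemma, that the parity condition on $\{i:\pi(i)<0\}$ is \emph{never used} in the upper bound argument: the bound holds for all signed permutations, and parity only matters when checking that the explicit maximiser $[1,\dots,m,-n,\dots,-(m+1)]$ actually lies in $D_n$. The real work is elsewhere: one must prove that $\Gamma(\pi)$ always contains a vertex of degree at most $n+1$, and at most $n$ when $n$ is odd, so that deleting it and passing to $D_{n-1}$ loses at most $n+1$ (resp.\ $n$) edges. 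This is a substantial case analysis on the structure of $B(1,\pi(n))$ in $\Gamma(\pi)$ --- not on the global sign pattern --- and it does not follow from the $B_n$ estimate ``in most cases'' as you suggest; the paper redoes it from scratch for $D_n$ because the covering relations of types~3 and~4 in Proposition~\ref{propdn} have no $B_n$ counterpart. Your phrase ``deleting the position carrying the extreme value'' is too optimistic: the paper starts at $\pi(n)$ but, when that vertex has high degree, must locate a different low-degree vertex by a careful search.
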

 The main point of the proofs is the definition, for any element $\pi\in B_n$ (resp. $D_n$) of a graph $\Gamma(\pi)$ that describes the edge structure of the Hasse diagrams Bruhat order in a neighborhood of $\pi$. In particular the number of edges of $\Gamma(\pi)$ ($\Gamma(\mu)$) coincides with the number of elements that cover, or are covered by, $\pi$ in the Bruhat order . The main part of the proofs of these theorems \ref{teobn} and \ref{degdn} is the study of the maximal degree in graphs of type $\Gamma(\pi)$.

This work is subdivided into two parts, we now list the contents of each of these parts.\\
Section 2 is a preliminaries section. We introduce the Ricci curvature of a graph and briefly describe the historical motivation for such a definition. We then present some results related to the computation of the discrete Ricci curvature for graphs. We conclude this  part by recalling the main notions of Coxeter theory used in the rest of the paper.

Section 3 contains the new results of this work. We consider the  finite rreducible Coxeter groups and study the Ricci curvature of the Hsse graph of the Bruhat order. We start from the  dihedral case, the result follows from a particular property of the local structure of these graphs. We then study the Ricci curvature of the other irreducible Coxeter groups in terms of the maximal degree of the vertices of these graphs. This follows from known results ($A_n$), from a SageMath computation ($E_6,F_4,H_3$) and from new results ($B_n$, $D_n$). Subsections 3.1 and 3.2 are devoted to the proof in types $B_n$ and $D_n$ respectively.

\section{Preliminaries}\label{prelimin}

\subsection{Ricci curvature of a locally finite graph}\label{radici}
We recall here the definition and main facts about the discrete Ricci curvature of a locally finite graph, the main reference for this section is Subsection $1.1$ from \cite{klart}, the notation is slightly different.  

Let $G$ be a graph, 
we assume it to be undirected, with no loops and with no multiple edges (i.e. a simple graph); also we ask that it has no isolated vertices.
We denote by $\mathcal{V}(G)$ the set of vertices of $G$, by $\mathcal{E}(G)$ its edges and we take $\delta(x,y)$ as the function $\delta:\mathcal{V}(G)\times\mathcal{V}(G)\rightarrow \mathbb{N}\cup \{\infty\}$ that gives the distance between two vertices. For a fixed $x\in\mathcal{V}(G)$ and $i\in\mathbb{N}$ we define the set:
\[
B(i,x):= \{u\in\mathcal{V}(G)|\delta(x,u)=i\}; 
\]
we denote by $d(x)$ the cardinality of $B(1,x)$ and call it the degree of $x$. 

\begin{defin}
We say that a given $G$ is locally finite if $d(x)< \infty$ for all $x\in \mathcal{V}(G)$.
\end{defin}
From now on we assume $G$ to be locally finite.

Given real functions $f$ and $g$ on $\mathcal{V}(G)$ and $x\in\mathcal{V}(G)$ we define the following operators:
\begin{itemize}
  \item $\Delta (f)(x):=\sum_{v\in B(1,x)}(f(v)-f(x))$;
  \item $\Gamma (f,g)(x):=\frac{1}{2}\sum_{v\in B(1,x)}(f(x)-f(v))(g(x)-g(v))$;
  \item $\Gamma_2(f)(x):=\frac{1}{2}\left(\Delta(\Gamma(f,f))(x)\right)-\Gamma(f,\Delta(f))(x)$.
\end{itemize}
Instead of $\Gamma(f,f)(x)$ we write $\Gamma(f)(x)$. Note that $\Gamma(f)(x)\geq 0$ $\forall x\in \mathcal{V}(G)$, the equality holds if and only if $f(x)=f(v)$ for all $v$ in $B(1,x)$. Also note that $\Delta(f)(x)=\sum_{v\in B(1,x)}f(v)-d(x)f(x)$.

\medskip
These definitions allow us, following \cite{klart} and \cite{shmuck}, to introduce the Ricci curvature of a graph:
\begin{defin}\label{curvy}
The discrete Ricci curvature of a graph $G$, denoted $\ric(G)$, is the maximum value $K\in\mathbb{R}\cup \{-\infty\}$ such that for any real function $f$ on $\mathcal{V}(G)$ and any vertex $x$, $\Gamma_2(f)(x)\geq K\Gamma(f)(x)$ .
\end{defin}
There is also a local version of this definition: 
\begin{defin}
The local Ricci curvature of a graph $G$ at a given point $\bar{x}\in\mathcal{V}(G)$ is the maximum value $K\in\mathbb{R}\cup \{-\infty\}$ such that for any real function $f$ on $\mathcal{V}(G)$ , $\Gamma_2(f)(\bar{x})\geq K\Gamma(f)(\bar{x})$ holds. The local curvature so defined is denoted $\Ric(G)_{\bar{x}}$. 
\end{defin}

We obtain that $\Ric(G)=inf_{x\in\mathcal{V}(G)}\Ric(G)_x$.

For brevity, in the rest of this work, we often say "curvature" instead of "Discrete Ricci curvature".

\begin{obs}
Note that if $c,d\in\mathbb{R}$
and $f:\mathcal{V}(G)\rightarrow \mathbb{R}$
then $\Delta(f)=\Delta(f+c)$, $\Gamma(f+c,g+d)=\Gamma(f,g)$ and $\Gamma_{2}(f+c)=\Gamma_2(f)$. 
We may therefore assume that in the expression $\Gamma_2(f)(x)\geq K\Gamma(f)(x)$, $f$ satisfies $f(x)=0$. This allow us to use the following formula for $\Gamma$:
$$\Gamma (f)(x)=\frac{1}{2}\sum_{v\in B(1,x)}f(v)^2.$$
\end{obs}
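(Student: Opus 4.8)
The plan is to verify the three invariance identities by direct substitution into the definitions and then read off both the normalization and the simplified formula as immediate consequences. First I would treat $\Delta$ and $\Gamma$, where the point is simply that each expression depends on $f$ only through the differences $f(v)-f(x)$ (and similarly in $g$), so an additive constant cancels. Concretely, for the Laplacian one writes
\[
\Delta(f+c)(x)=\sum_{v\in B(1,x)}\bigl((f(v)+c)-(f(x)+c)\bigr)=\sum_{v\in B(1,x)}(f(v)-f(x))=\Delta(f)(x),
\]
and for the carré du champ
\[
\Gamma(f+c,g+d)(x)=\tfrac{1}{2}\sum_{v\in B(1,x)}\bigl((f(x)-f(v))\bigr)\bigl((g(x)-g(v))\bigr)=\Gamma(f,g)(x),
\]
since the constants $c$ and $d$ drop out of each factor. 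These two computations are purely mechanical.

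The only step requiring a little care is the identity for $\Gamma_2$, where I would chain the first two results rather than attempt to push the constant through $\Gamma_2$ in one stroke. Starting from the definition $\Gamma_2(f+c)=\tfrac12\Delta(\Gamma(f+c,f+c))-\Gamma(f+c,\Delta(f+c))$, I would argue in two moves. For the first term, the $\Gamma$-invariance gives $\Gamma(f+c,f+c)=\Gamma(f,f)$ as functions on $\mathcal V(G)$, hence $\Delta(\Gamma(f+c,f+c))=\Delta(\Gamma(f,f))$. For the second term, the $\Delta$-invariance gives $\Delta(f+c)=\Delta(f)$, so $\Gamma(f+c,\Delta(f+c))=\Gamma(f+c,\Delta(f))$; then applying the $\Gamma$-invariance once more, now with the additive constant $c$ in the first slot and $d=0$ in the second, yields $\Gamma(f+c,\Delta(f))=\Gamma(f,\Delta(f))$. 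Combining the two gives $\Gamma_2(f+c)=\Gamma_2(f)$.

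Finally I would deduce the normalization and the formula. Since both sides of the defining inequality $\Gamma_2(f)(x)\geq K\Gamma(f)(x)$ are unchanged under $f\mapsto f+c$ by the three identities just established, the inequality holds for $f$ if and only if it holds for $f-f(x)$; choosing $c=-f(x)$ thus lets us assume without loss of generality that $f(x)=0$. Under this normalization the quadratic form collapses, because
\[
\Gamma(f)(x)=\tfrac{1}{2}\sum_{v\in B(1,x)}(f(x)-f(v))^2=\tfrac{1}{2}\sum_{v\in B(1,x)}f(v)^2,
\]
which is exactly the asserted formula. I expect no genuine obstacle here: the entire statement is a routine consequence of the fact that $\Delta$ and $\Gamma$ see $f$ only through first differences, and the sole subtlety is keeping track of which slot the constant sits in when reducing $\Gamma_2$.
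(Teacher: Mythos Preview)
Your argument is correct and is the natural direct verification from the definitions. The paper itself states this observation without proof, treating the invariance identities and the resulting normalization as immediate, so your write-up simply makes explicit what the paper leaves to the reader.
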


The following result appears in \cite[Subsection 1.1]{klart}:

\begin{prop}\label{proof2}
$\Gamma_2$ can be expressed through the following formula when $f(x)=0$:
$$2\Gamma_2(f)(x)=\frac{1}{2}\sum_{u\in B(2,x)}\sum_{v\in B(1,u)\cap B(1,x)}(f(u)-2f(v))^2+\left( \sum_{v\in B(1,x)}f(v) \right)^2+
$$
$$
+\sum_{\{v,v'\}\in \mathcal{E}(G)}\left(2(f(v)-f(v'))^2+\frac{1}{2}(f(v)^2+f(v')^2)\right)+\sum_{v\in B(1,x)}\frac{4-d(x)-d(v)}{2} f(v)^2,
$$
where the third sum runs over all $v,v'\in B(1,x)$ such that $\{v,v'\}$ is an edge in $G$.
\end{prop}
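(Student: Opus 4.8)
The plan is to derive the stated formula for $2\Gamma_2(f)(x)$ directly from the definitions of $\Delta$, $\Gamma$ and $\Gamma_2$, under the normalization $f(x)=0$, by carefully bookkeeping which vertices contribute to each term. First I would write $2\Gamma_2(f)(x) = \Delta(\Gamma(f))(x) - 2\Gamma(f,\Delta f)(x)$ and expand each of the two pieces separately. For the first piece, $\Delta(\Gamma(f))(x) = \sum_{v\in B(1,x)}\bigl(\Gamma(f)(v)-\Gamma(f)(x)\bigr)$, and each $\Gamma(f)(v) = \tfrac12\sum_{w\in B(1,v)}(f(v)-f(w))^2$ is a sum over neighbours $w$ of $v$; the neighbours $w$ of a neighbour $v$ of $x$ are exactly the vertices at distance $0$, $1$ or $2$ from $x$ (i.e. $w=x$, $w\in B(1,x)$, or $w\in B(2,x)$), so I would split that inner sum into three parts according to $\delta(x,w)$.

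Next I would expand $\Gamma(f,\Delta f)(x) = \tfrac12\sum_{v\in B(1,x)}(f(x)-f(v))(\Delta f(x)-\Delta f(v))$, using $f(x)=0$ and $\Delta f(x)=\sum_{v\in B(1,x)}f(v)$ and $\Delta f(v) = \sum_{w\in B(1,v)}(f(w)-f(v))$. Again the neighbours $w$ of $v$ fall into the three distance classes relative to $x$. The strategy is then to collect, across both expansions, all terms indexed by a fixed pair $(u,v)$ with $u\in B(2,x)$ and $v$ a common neighbour of $u$ and $x$; these should assemble into $\tfrac14(f(u)-2f(v))^2$ after combining the $f(u)^2$, $f(u)f(v)$ and $f(v)^2$ contributions. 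Likewise the terms indexed by an unordered pair $\{v,v'\}$ of adjacent vertices in $B(1,x)$ should assemble into $2(f(v)-f(v'))^2 + \tfrac12(f(v)^2+f(v')^2)$, and the remaining purely "$f(v)^2$-type" terms, together with the $\bigl(\sum f(v)\bigr)^2$ coming from $\Delta f(x)$, should produce the $\bigl(\sum_{v\in B(1,x)}f(v)\bigr)^2$ term and the $\sum_v \tfrac{4-d(x)-d(v)}{2}f(v)^2$ correction. The factor $d(x)$ enters because $v\sim x$ contributes to $\Delta f(v)$, and $d(v)$ enters from $\Delta f(v) = \sum_w f(w) - d(v)f(v)$.

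The main obstacle I expect is the combinatorial bookkeeping: several different expansions each produce monomials of the forms $f(u)^2$, $f(v)^2$, $f(v)f(v')$, $f(u)f(v)$ and $f(v)f(x)=0$, and I must be scrupulous about (i) the factors $\tfrac12$ sitting in front of $\Gamma$ and $\Gamma_2$, (ii) whether a pair of adjacent vertices in $B(1,x)$ is counted once or twice (ordered vs. unordered), and (iii) the fact that a vertex $v\in B(1,x)$ is itself a neighbour of itself-via-$x$ in the $\Delta f(v)$ term, which is the source of the $-d(x)$ and the $+4$ in the last sum. A clean way to manage this is to organize the whole computation as a sum over \emph{ordered} pairs $(v,w)$ with $v\in B(1,x)$ and $w\in B(1,v)$, record the coefficient each such pair receives from each of the pieces $\Delta(\Gamma f)(x)$ and $-2\Gamma(f,\Delta f)(x)$, and then regroup by the distance of $w$ from $x$. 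Since this identity is quoted from \cite[Subsection 1.1]{klart}, an acceptable alternative is simply to cite that reference and indicate that the displayed formula is obtained by the above expansion; but carrying out the regrouping explicitly makes the paper self-contained, and I would favour that, presenting it as a short lemma-style verification rather than a long calculation.
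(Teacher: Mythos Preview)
Your plan is sound: expanding $2\Gamma_2(f)(x)=\Delta(\Gamma(f))(x)-2\Gamma(f,\Delta f)(x)$ under $f(x)=0$, splitting the inner sums over $w\in B(1,v)$ according to $\delta(x,w)\in\{0,1,2\}$, and then regrouping monomials, is exactly how this identity is obtained, and the pitfalls you flag (the $\tfrac12$ factors, ordered vs.\ unordered pairs in $B(1,x)$, and the appearance of $d(x)$ and $d(v)$ via $\Delta f(v)$) are the right ones to watch.

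Note, however, that the paper does \emph{not} supply a proof of this proposition at all: it simply states the formula and attributes it to \cite[Subsection~1.1]{klart}. So your proposal is not so much a different route as a route where the paper has none. Your own final paragraph already anticipates this: citing \cite{klart} is what the paper does, while your explicit regrouping would make the argument self-contained. Either choice is acceptable; if you do write out the computation, keep it compact by working throughout with ordered pairs $(v,w)$, $v\in B(1,x)$, $w\in B(1,v)$, as you suggest, and only pass to unordered edges $\{v,v'\}\subset B(1,x)$ at the very end.
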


The following simple observation does not appear anywhere in the literature so we include its proof:

\begin{lema}\label{proof3}
We can write
  \begin{equation}\label{equcurv}
  \text{Ric}(G)=\text{inf}_{x,f} \frac{\Gamma_2(f)(x)}{\Gamma(f)(x)} 
  \end{equation}
  where $x$ ranges in $\mathcal{V}(G)$ and $f$ ranges over the real functions defined on $\mathcal{V}(G)$ such that $f(x)=0$ and $\Gamma(f)(x)>0$.
\end{lema}

\begin{proof}
We know that $G$ has no isolated vertices, so there is a function $\bar{f}$ on $\mathcal{V}(G)$ such that $\Gamma(\bar{f})(x)> 0$. Any $K$ that satisfies 
$$\Gamma_2(f)(x)\geq K\Gamma(f)(x )\quad \forall x,\forall f$$  also satisfies
$$
K\leq \frac{\Gamma_2(g)(x)}{\Gamma(g)(x)}<\infty
\quad \forall x,g \text{ s.t.} \Gamma(g)(x)>0.
$$
Let now $\tilde{f}:\mathcal{V}(G)\rightarrow \mathbb{R}$ be such that $\Gamma(\tilde{f})(x)=0$, it comes easily from Proposition \ref{proof2} that in this case $\Gamma_2(\tilde{f})(x)\geq 0$ and so in particular $\Gamma_2(\tilde{f})(x)\geq K\Gamma(\tilde{f})(x)$.
The statement follows. 
\end{proof}

\begin{obs}
From Lemma \ref{proof3} we deduce the following formula for the local Ricci curvature:
\[
  \text{Ric}(G)_x:=\text{inf}_{f} \frac{\Gamma_2(f)(x)}{\Gamma(f)(x)},
\]
where $f$ ranges among the functions on $\mathcal{V}(G)$ such that $f(x)=0$ and $\Gamma(f)(x)\neq 0$.
\end{obs}

\begin{obs}\label{obs2}
From the formulas for $\Gamma$ and $\Gamma_2$ we obtain that $\Ric(G)_x$ depends only on the subgraph obtained as the union of the paths of length 1 and 2 starting from $x$. The set of vertices of this graph is $\{x\}\cup B(1,x)\cup B(2,x)$ and the edges are the ones connecting vertices in $B(1,x)$ to vertices in $\{x\}\cup B(1,x)\cup B(2,x)$. We will refer to such a subgraph as the length-2 path subgraph of $x$. As a consequence two vertices with isomorphic length-2 path subgraphs have the same local Ricci curvature.
\end{obs}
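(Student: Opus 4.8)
The plan is to read off, from the explicit formula in Proposition~\ref{proof2} and from the identity $\Gamma(f)(x)=\frac12\sum_{v\in B(1,x)}f(v)^2$, precisely which combinatorial data of $G$ the ratio $\Gamma_2(f)(x)/\Gamma(f)(x)$ uses, and to verify that all of that data is already visible in the length-2 path subgraph of $x$. Since, by the remark following Lemma~\ref{proof3}, $\Ric(G)_x$ is the infimum of this ratio over real functions $f$ with $f(x)=0$ and $\Gamma(f)(x)\neq 0$, the statement will follow once we know that the ratio itself, as a function of $f$, is an invariant of that subgraph (with $x$ marked).

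After normalizing $f(x)=0$, every term occurring in $\Gamma(f)(x)$ and in $2\Gamma_2(f)(x)$ involves only the values $f(v)$ for $v\in B(1,x)$ and $f(u)$ for $u\in B(2,x)$, so both quantities depend only on the restriction of $f$ to $B(1,x)\cup B(2,x)$. Inspecting the four summands of $2\Gamma_2(f)(x)$, the structural parameters they require are: for the first double sum, the sets $B(1,u)\cap B(1,x)$ of common neighbours of $u\in B(2,x)$ and $x$; for the third sum, the set of edges of $G$ with both endpoints in $B(1,x)$; for the fourth sum, the degrees $d(x)$ and $d(v)$, $v\in B(1,x)$; the second sum needs only the vertex set $B(1,x)$. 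The adjacency information in the first and third sums is internal to the subgraph, since any edge of $G$ with an endpoint in $B(1,x)$ has its other endpoint within distance $2$ of $x$ and is, by definition, retained. The point that genuinely needs an argument --- and which I expect to be the only subtlety --- is that $d(x)$ and $d(v)$ for $v\in B(1,x)$, a priori global invariants, coincide with the degrees computed inside the subgraph; this holds because every edge of $G$ incident to a vertex of $\{x\}\cup B(1,x)$ is kept in the subgraph (an edge at $x$ joins $x$ to $B(1,x)$; an edge at $v\in B(1,x)$ is incident to $B(1,x)$). The degrees of vertices of $B(2,x)$ are not recoverable, but they never enter the formula.

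Putting this together, the pair $\bigl(\Gamma(f)(x),\Gamma_2(f)(x)\bigr)$ is a function of $f|_{B(1,x)\cup B(2,x)}$ that depends only on the isomorphism type of the length-2 path subgraph of $x$ with $x$ marked. Concretely, if $\phi$ is an isomorphism from the length-2 path subgraph of $x$ to that of $y$ with $\phi(x)=y$, then $\phi$ carries $B(1,x)$ onto $B(1,y)$ (these being the neighbours of the marked vertex), hence $B(2,x)$ onto $B(2,y)$, and preserves the parameters listed above; consequently $f\mapsto f\circ\phi^{-1}$ is a ratio-preserving bijection between $\{f:f(x)=0,\ \Gamma(f)(x)\neq 0\}$ and the analogous set at $y$, giving $\Ric(G)_x=\Ric(G)_y$. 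Thus the only step requiring real care is the verification that the degrees entering the fourth summand are intrinsic to the subgraph; the remainder is a direct inspection of the formula in Proposition~\ref{proof2}.
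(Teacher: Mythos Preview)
Your argument is correct and is exactly the reasoning the paper has in mind: the remark is stated without proof in the paper, as an immediate consequence of the explicit formulas for $\Gamma$ and $\Gamma_2$, and your proposal simply unpacks that consequence carefully. The one point you flag as needing real care---that $d(x)$ and $d(v)$ for $v\in B(1,x)$ coincide with the degrees computed inside the length-2 path subgraph---is indeed the only nontrivial check, and your justification via the triangle inequality is sound.
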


We conclude with the following remark about triangle-free graphs. These are graphs with no $x,u,v\in\mathcal{V}(G)$ such that $\{x,u\},\{x,v\},\{u,v\}\in \mathcal{E}(G)$:

\begin{obs}\label{triang}
If $G$ is a triangle-free graph, then:
\[
2\Gamma_2(f)(x)=\frac{1}{2}\sum_{u\in B(2,x)}\sum_{v\in B(1,u)\cap B(1,x)}(f(u)-2f(v))^2+
\]
\[
+(\sum_{v\in B(1,x)}f(v))^2+\sum_{v\in B(1,x)}\frac{4-d(x)-d(v)}{2} f(v)^2.
\]
We obtain this formula just by erasing the third sum in the equation in Proposition \ref{proof2}.
\end{obs}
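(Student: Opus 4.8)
The plan is to read the claim off directly from Proposition~\ref{proof2}, the only point being to argue that, under the triangle-free hypothesis, the third sum in that formula is a sum over the empty set. Recall that this sum ranges over all pairs $\{v,v'\}\in\mathcal{E}(G)$ with $v,v'\in B(1,x)$. Suppose such a pair existed. By definition, $v,v'\in B(1,x)$ means exactly that $\{x,v\}\in\mathcal{E}(G)$ and $\{x,v'\}\in\mathcal{E}(G)$, and by assumption $\{v,v'\}\in\mathcal{E}(G)$ as well; hence $x,v,v'$ would be three distinct vertices pairwise joined by edges, i.e.\ a triangle in $G$, contradicting the hypothesis. So edges inside $B(1,x)$ are in bijection with triangles through $x$, and triangle-freeness forces the index set of the third sum to be empty; the sum therefore vanishes identically.

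Substituting this into the formula of Proposition~\ref{proof2} and leaving the remaining three terms unchanged — the double sum over $u\in B(2,x)$, the term $\bigl(\sum_{v\in B(1,x)}f(v)\bigr)^2$, and the sum $\sum_{v\in B(1,x)}\frac{4-d(x)-d(v)}{2}f(v)^2$, none of which refers to edges inside $B(1,x)$ — yields exactly the stated identity (with $f(x)=0$ still in force, inherited from Proposition~\ref{proof2}). There is essentially no obstacle: the whole content is the elementary observation above that triangle-freeness is equivalent to the emptiness of the third sum's index set, after which the reduction is immediate.
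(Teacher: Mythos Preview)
Your argument is correct and is exactly the approach the paper intends: the remark in the paper simply asserts that one erases the third sum from Proposition~\ref{proof2}, and you have supplied the (one-line) justification that triangle-freeness makes its index set empty. There is nothing to add.
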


We go on by recalling \cite[Theorem 1.2]{klart} and a corollary:

\begin{teo}\label{teotriang}
Let $G$ be a locally finite graph, $t(v,v')$ be the function that counts the number of triangles containing both the vertices $v$ and $v'$ and let $T=sup_{\{v,v'\}\subset \mathcal{V}(G)} t(v,v')$. Then $\Ric(G)\leq 2+\frac{T}{2}$.
\end{teo}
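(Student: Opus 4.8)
The plan is to produce an explicit vertex $x$ and an explicit test function $f$ with $f(x)=0$ and $\Gamma(f)(x)>0$ for which the ratio $\Gamma_2(f)(x)/\Gamma(f)(x)$ is at most $2+T/2$; by Lemma \ref{proof3} this already gives $\Ric(G)\le 2+T/2$. If $T=\infty$ there is nothing to prove, so I assume $T<\infty$.

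I would take $x$ to be a vertex of \emph{minimum} degree — this exists since degrees are positive integers and $G$ is locally finite — and set $f(x)=0$, $f(v)=1$ for all $v\in B(1,x)$, and $f(u)=2$ for all $u\in B(2,x)$ (the values on vertices farther than $2$ from $x$ are irrelevant by Remark \ref{obs2}). Since $G$ has no isolated vertices, $\Gamma(f)(x)=\tfrac12 d(x)>0$, so $f$ is admissible. The point of the value $2$ on $B(2,x)$ is that, when $f$ is fed into the formula of Proposition \ref{proof2}, the first double sum vanishes term by term, as $f(u)-2f(v)=0$ whenever $u\in B(2,x)$ and $v\in B(1,u)\cap B(1,x)$. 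Evaluating the remaining sums on $f$: the second sum is $d(x)^2$, the third sum is the number $e_1$ of edges of $G$ inside $B(1,x)$ (each edge contributes $2\cdot 0+\tfrac12\cdot 2=1$), and the fourth sum is $\tfrac12\bigl(4d(x)-d(x)^2-\sum_{v\in B(1,x)}d(v)\bigr)$. Using the identity $\sum_{v\in B(1,x)}d(v)=d(x)+2e_1+e_{12}$, where $e_{12}$ is the number of edges between $B(1,x)$ and $B(2,x)$, everything collapses to
\[
\frac{\Gamma_2(f)(x)}{\Gamma(f)(x)}=\frac{d(x)}{2}+\frac32-\frac{e_{12}}{2\,d(x)}.
\]

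It then remains to bound $e_{12}$ from below. Each $v\in B(1,x)$ has exactly $d(v)-1-t(v,x)$ neighbours in $B(2,x)$, where $t(v,x)$ counts the triangles on the edge $\{v,x\}$; since $t(v,x)\le T$ this gives $e_{12}\ge\sum_{v\in B(1,x)}d(v)-(T+1)d(x)$. Substituting into the displayed identity yields
\[
\frac{\Gamma_2(f)(x)}{\Gamma(f)(x)}\le 2+\frac T2+\frac{d(x)^2-\sum_{v\in B(1,x)}d(v)}{2\,d(x)},
\]
and the last fraction is $\le 0$ precisely because $x$ has minimum degree, so $d(v)\ge d(x)$ for every $v\in B(1,x)$ and hence $\sum_{v\in B(1,x)}d(v)\ge d(x)^2$. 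This proves $\Ric(G)\le\Ric(G)_x\le 2+T/2$.

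Conceptually the argument has only two real ingredients: choosing $f$ to be $0,1,2$ on the three successive spheres around $x$ so that the $B(2,x)$–sum in Proposition \ref{proof2} disappears, and localizing the estimate at a minimum-degree vertex so that $\sum_{v\in B(1,x)}d(v)\ge d(x)^2$. I expect the main (and only mild) obstacle to be the bookkeeping inside Proposition \ref{proof2} — in particular keeping track of how $\sum_{v\in B(1,x)}d(v)$ splits according to whether a neighbour of $v$ is $x$, lies in $B(1,x)$, or lies in $B(2,x)$ — rather than any genuine conceptual difficulty; everything else is a routine expansion.
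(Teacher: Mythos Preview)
The paper does not contain its own proof of this statement: it is quoted verbatim as \cite[Theorem 1.2]{klart}, with only the corollary for triangle-free graphs stated afterwards. So there is no in-paper argument to compare against.

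Your proof is correct. The computation from Proposition~\ref{proof2} is accurate: with $f\equiv 0,1,2$ on the spheres of radius $0,1,2$ the first double sum vanishes, and the identity $\sum_{v\in B(1,x)}d(v)=d(x)+2e_1+e_{12}$ together with $e_{12}\ge\sum_v d(v)-(T+1)d(x)$ gives exactly
\[
\frac{\Gamma_2(f)(x)}{\Gamma(f)(x)}\le 2+\frac{T}{2}+\frac{d(x)^2-\sum_{v\in B(1,x)}d(v)}{2d(x)},
\]
with the last term nonpositive at a minimum-degree vertex. The existence of such a vertex is guaranteed since degrees lie in the well-ordered set of positive integers. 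This is in fact the standard argument for this bound (and essentially how it is proved in \cite{klart}): the ``distance'' test function $f=0,1,2$ kills the $B(2,x)$-contribution, and minimising the degree at $x$ handles the remaining correction.
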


\begin{corollary}\label{coro}
Let $G$ be a graph with no triangles, then $\Ric(G)\leq2$.
\end{corollary}

We include now the statement of \cite[Theorems 27, 29]{sic3} which are crucial to prove main Theorem \ref{H(W)1}. 
\begin{teo}\label{teomatrix}
Given a locally finite graph $G$ and $x$ a vertex of $G$, then

\[
\Ric(G)_x=\min \{\lambda|\lambda \text{ is an eigenvalue of }A(x)\}.
\]
As a consequence
\[
\Ric(G)=\inf\{\lambda|\lambda \text{ is an eigenvalue of }A(x),x\in \mathcal{V}(G)\}.
\]
\end{teo}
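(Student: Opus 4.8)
The statement asserts that the local Ricci curvature $\Ric(G)_x$ equals the minimal eigenvalue of a certain symmetric matrix $A(x)$ associated to the length-2 path subgraph of $x$, and that the global curvature is the infimum of these. The natural route is to recognize that the numerator $\Gamma_2(f)(x)$ and the denominator $\Gamma(f)(x)$ are both quadratic forms in the finitely many values $f(v)$ for $v\in B(1,x)$ (together with $f(u)$ for $u\in B(2,x)$, which can be eliminated). Indeed, by Remark \ref{obs2} only these values matter, and by the normalization $f(x)=0$ we have $2\Gamma(f)(x)=\sum_{v\in B(1,x)}f(v)^2$, which is already a positive-definite form in the variables $f(v)$. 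So the plan is: first, express $2\Gamma_2(f)(x)$ using the formula in Proposition \ref{proof2}; second, eliminate the $B(2,x)$ variables; third, read off the resulting quadratic form in the $B(1,x)$ variables as $f^{\mathrm{t}} A(x) f$; fourth, invoke the Rayleigh quotient / Courant--Fischer characterization to conclude that the infimum of $\Gamma_2(f)(x)/\Gamma(f)(x)$ over nonzero $f$ equals the smallest eigenvalue of $A(x)$.

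The one subtlety is the elimination of the $B(2,x)$ variables, and this is really the heart of the argument. For a fixed $u\in B(2,x)$, the only place $f(u)$ appears in the formula of Proposition \ref{proof2} is in the term $\tfrac12\sum_{v\in B(1,u)\cap B(1,x)}(f(u)-2f(v))^2$. Minimizing a sum $\sum_{v}(f(u)-2f(v))^2$ over $f(u)$ is an elementary least-squares problem: the optimal value is $f(u)=\tfrac{2}{m_u}\sum_{v\in B(1,u)\cap B(1,x)} f(v)$, where $m_u=|B(1,u)\cap B(1,x)|$, and substituting back yields a quadratic form purely in the $f(v)$'s (a multiple of a graph-Laplacian-type form on the neighbors of $u$ in $B(1,x)$, minus a rank-one correction). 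Since for every choice of the $B(1,x)$-values one may then choose the $B(2,x)$-values to attain this minimum, and since these choices are independent across distinct $u$, the infimum of the Rayleigh quotient over all $f$ equals the infimum over $f$ supported on $\{x\}\cup B(1,x)$ with the optimal $B(2,x)$ extension. This defines $A(x)$ as a symmetric matrix indexed by $B(1,x)$, with diagonal entries coming from the $\tfrac{4-d(x)-d(v)}{2}$ terms, the $(\sum f(v))^2$ term, the edge-terms $2(f(v)-f(v'))^2+\tfrac12(f(v)^2+f(v'')^2)$ over edges inside $B(1,x)$, and the contributions of the minimized $B(2,x)$ terms; all entries depend only on the combinatorics of the length-2 path subgraph. (One should double-check that the minimum over $f(u)$ is genuinely attained and that the resulting form is well-defined even when some $m_u$ coincide across different $u$ — it is, since the $u$-contributions are literally separate summands.)

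Once $A(x)$ is in hand, the remaining step is routine linear algebra: $2\Gamma_2(f)(x)=f^{\mathrm{t}}A(x)f$ and $2\Gamma(f)(x)=f^{\mathrm{t}}f$ (restricting to the $B(1,x)$ coordinates), so by Lemma \ref{proof3} and the following remark, $\Ric(G)_x=\inf_{f\ne 0} \frac{f^{\mathrm{t}}A(x)f}{f^{\mathrm{t}}f}$, which is exactly the smallest eigenvalue of the symmetric matrix $A(x)$ by the Rayleigh–Ritz theorem; the case $\Gamma(f)(x)=0$ was already dispatched inside Lemma \ref{proof3}. The global formula $\Ric(G)=\inf_x \Ric(G)_x$ was noted right after the definition of local curvature, so taking the infimum over $x$ gives the second assertion immediately. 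I expect the main obstacle to be purely bookkeeping: carrying the $B(2,x)$-elimination through cleanly and verifying that every term in Proposition \ref{proof2}'s formula lands in the claimed entry of $A(x)$, so that $A(x)$ is manifestly symmetric and depends only on the length-2 neighborhood as promised by Remark \ref{obs2}.
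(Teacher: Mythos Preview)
The paper does not contain a proof of Theorem \ref{teomatrix}; it is quoted from \cite[Theorems 27, 29]{sic3} as an external input used later in Proposition \ref{H(W)dihedral} and Theorem \ref{H(W)1}. Hence there is no ``paper's own proof'' to compare against, and in particular the matrix $A(x)$ is never defined in the present paper.

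That said, your proposed argument is the natural one and is essentially how such a matrix $A(x)$ is produced. The only point worth flagging is that you must check that the infimum over the $B(2,x)$ variables is compatible with taking the infimum of the full Rayleigh quotient: since $\Gamma(f)(x)$ is independent of the values $f(u)$ for $u\in B(2,x)$ and each such $f(u)$ appears in $2\Gamma_2(f)(x)$ only through the nonnegative summand $\tfrac12\sum_{v\in B(1,u)\cap B(1,x)}(f(u)-2f(v))^2$, minimizing over these variables first is legitimate and yields exactly the quadratic form you describe. After that reduction the Rayleigh--Ritz step is immediate, and the global statement follows from $\Ric(G)=\inf_x \Ric(G)_x$. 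So your plan is correct; it simply cannot be compared to anything in this paper because the proof lives in the cited reference.
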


\begin{teo}\label{corostima}
Let $G$ be a triangle free graph then 
\[
\Ric(G)\geq 4-\max_{x,y\in \mathcal{E}(G)} \left( \frac{3d(x)+d(y)}{2} \right)
\]

\end{teo}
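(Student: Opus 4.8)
The plan is to derive the inequality directly from the triangle-free formula for $\Gamma_2$ recorded in Remark \ref{triang}, combined with the eigenvalue characterization in Theorem \ref{teomatrix}. First I would normalize and fix a vertex $x$ and a function $f$ with $f(x)=0$, so that $\Gamma(f)(x)=\frac12\sum_{v\in B(1,x)}f(v)^2$, and aim to bound $2\Gamma_2(f)(x)$ from below by $(8-3d(x)-d(y)$-type coefficients$)\Gamma(f)(x)$; equivalently, using Theorem \ref{teomatrix}, I would show that the associated quadratic form $A(x)$ has smallest eigenvalue at least $4-\max_{\{x,y\}\in\mathcal E(G)}\frac{3d(x)+d(y)}{2}$.

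The key steps, in order: (1) Start from the triangle-free identity
\[
2\Gamma_2(f)(x)=\frac12\sum_{u\in B(2,x)}\sum_{v\in B(1,u)\cap B(1,x)}(f(u)-2f(v))^2+\Bigl(\sum_{v\in B(1,x)}f(v)\Bigr)^2+\sum_{v\in B(1,x)}\frac{4-d(x)-d(v)}{2}f(v)^2.
\]
(2) Discard the nonnegative square term $\bigl(\sum_{v}f(v)\bigr)^2$, which only weakens the bound. (3) The double sum over $B(2,x)$ is the delicate part: for each $v\in B(1,x)$, the vertex $v$ appears in the inner sum once for every neighbor $u$ of $v$ lying in $B(2,x)$, i.e.\ for every neighbor of $v$ other than $x$ and other than the common neighbors of $v$ and $x$ in $B(1,x)$. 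Since $G$ is triangle-free, $v$ and $x$ have no common neighbor, so $v$ has exactly $d(v)-1$ neighbors in $B(2,x)$. Expanding $(f(u)-2f(v))^2=f(u)^2-4f(u)f(v)+4f(v)^2$ and summing, the pure $f(v)^2$ contribution coming from the double sum is $\frac12(d(v)-1)\cdot 4 f(v)^2=2(d(v)-1)f(v)^2$; the cross terms $-2f(u)f(v)$ and the terms $f(u)^2$ must be controlled. The cleanest way is to keep the whole square and instead bound each $(f(u)-2f(v))^2\ge 0$, but that throws away too much; so instead I would complete the square in the $f(u)$ variables, or group the double sum by edges $\{v,u\}$ with $u\in B(2,x)$, writing the contribution of such an edge as $\frac12(f(u)-2f(v))^2$ and then using $2ab\le a^2+b^2$ to absorb the cross terms, arriving at a bound of the shape $2\Gamma_2(f)(x)\ge \sum_{v\in B(1,x)}\bigl(c_v f(v)^2\bigr) + (\text{nonneg.\ terms in }f(u))$ with $c_v = \frac{4-d(x)-d(v)}{2} + 2(d(v)-1) - (\text{correction})$, and then estimate $d(v)$ and the correction by the maximum over edges $\{x,v\}$.

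(4) Convert the resulting lower bound $2\Gamma_2(f)(x)\ge \bigl(8-3d(x)-d(y)\bigr)\Gamma(f)(x)$ (with $y$ the worst neighbor) into $\Gamma_2(f)(x)\ge \bigl(4-\tfrac{3d(x)+d(y)}{2}\bigr)\Gamma(f)(x)$, take the infimum over $f$ and over $x$ using Lemma \ref{proof3}, and bound by the maximum over all edges of $G$ to get the stated global inequality. The main obstacle I anticipate is step (3): handling the double sum over $B(2,x)$ without discarding so much that the constant degrades. The subtlety is that a single $u\in B(2,x)$ may have several neighbors $v,v'\in B(1,x)$, and the naive pointwise bound $(f(u)-2f(v))^2\ge 0$ loses the useful negative cross terms $-4f(u)f(v)$; getting exactly the coefficient $3d(x)$ rather than something larger requires carefully optimizing how the $f(u)^2$ and cross terms are redistributed (for instance choosing, for each $u$, the right convex combination, or using that the number of $v\in B(1,x)$ adjacent to a fixed $u$ is at most $d(x)$). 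I expect the final constant to come out of balancing these redistributions against the coefficient $\frac{4-d(x)-d(v)}{2}$ already present.
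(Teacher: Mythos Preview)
This statement is not proved in the present paper: it is quoted from \cite[Theorem~29]{sic3} and invoked as a black box in the proof of Theorem~\ref{H(W)1}. There is therefore no proof here to compare your outline against; the comments below concern the outline itself.

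Your starting point, the triangle-free identity of Remark~\ref{triang}, is the right one, but step~(3) is far easier than you fear. The double sum over $B(2,x)$ is a sum of squares, hence nonnegative; simply drop it together with the term $\bigl(\sum_{v\in B(1,x)}f(v)\bigr)^{2}$. What remains is
\[
2\Gamma_{2}(f)(x)\ \ge\ \sum_{v\in B(1,x)}\frac{4-d(x)-d(v)}{2}\,f(v)^{2}
\ \ge\ \Bigl(2-\tfrac{d(x)+\max_{v\sim x}d(v)}{2}\Bigr)\cdot 2\Gamma(f)(x),
\]
and via Lemma~\ref{proof3} this already gives $\Ric(G)\ge 2-\max_{\{x,y\}\in\mathcal{E}(G)}\tfrac{d(x)+d(y)}{2}$. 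If $\{a,b\}$ maximises $d(a)+d(b)$ with $d(a)\ge d(b)$, then $3d(a)+d(b)=(d(a)+d(b))+2d(a)$; hence as soon as $d(a)\ge 2$ this last bound is at least the stated one. The only excluded case is $d(a)=1$, which forces every vertex of $G$ to have degree~$1$; then $G$ is a disjoint union of single edges, a direct computation gives $\Ric(G)=2$, and the theorem holds with equality.

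Thus no expansion of the squares, no redistribution of cross terms, and no optimisation over the values $f(u)$ on $B(2,x)$ is needed. Your concern about ``getting exactly the coefficient $3d(x)$ rather than something larger'' has the direction of the difficulty reversed: discarding the double sum entirely yields the coefficient $d(x)$ in place of $3d(x)$, which for $d(x)\ge 2$ produces a \emph{stronger} bound than the one asserted, not a weaker one.
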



\subsection{Coxeter groups}\label{ssec1}

In this subsection we recall the definitions and main facts about Coxeter groups, the main reference is \cite{BB}. Our main goal is to define a family of graphs associated to Coxeter groups, namely the Hasse diagrams of the Bruhat order (denoted $H(W)$).

 Coxeter systems are pairs $(W,S)$ where $W$ is a group generated by the elements in $S$  and $S=\{s_i\}_{i\in I}$ is a finite set with the following relations:
\[
(s_is_j)^{m_{i,j}}=e.
\] 
The values $m_{i,j}$ are usually seen as the entries of a symmetric matrix with $m_{i,i}=1$ for all $i\in I$ and $m_{i,j}\geq 2$ (including $m_{i,j}=\infty$) for all $i,j\in I, i\neq j$. $W$ is called a Coxeter group, $S$ turns out to be a minimal set of generators, its elements are called Coxeter generators.
All the information about a Coxeter group can be encoded in a labeled graph called the Coxeter graph. Its set of vertices is $S$, and there is an edge between two vertices $s_i$ and $s_j$ if $m_{i,j}\geq 3$, such an edge is labeled with $m_{i,j}$ if $m_{i,j}\geq 4$. 
We state a very classical result from Coxeter Theory, namely the classification of finite Coxeter groups, for more details the reader can see \cite[Appendix A1]{BB} and \cite[Chapter 2]{hump}.
\begin{teo}
Given a finite Coxeter group $W$, this can be written in a unique way as direct product of the following irreducible Coxeter groups:
\[
W=W_1\times\ldots\times W_k
\] 
With $W_i$ of the following kind:
\begin{itemize}
\item $A_n$ $n\geq 1$;
\item $B_n$ $n\geq 2$;
\item $D_n$ $n\geq 3$;
\item $I_2(m)$ $m\geq 2$;
\item $H_3$, $H_4$;
\item $E_6$, $E_7$, $E_8$;
\item $F_4$.
\end{itemize}
\end{teo}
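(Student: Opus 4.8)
The plan is to prove this in the three classical stages: reduce to the irreducible case, characterize finiteness geometrically, and then classify the admissible Coxeter graphs by a finite combinatorial argument.

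\textbf{Stage 1: reduction to irreducibles.} Given a Coxeter system $(W,S)$, look at its Coxeter graph and partition $S=S_1\sqcup\dots\sqcup S_k$ according to connected components. If $s\in S_i$ and $t\in S_j$ with $i\neq j$ then $m_{s,t}=2$, i.e.\ $st=ts$, so the standard parabolic subgroups $W_i:=\langle S_i\rangle$ pairwise commute and together generate $W$. Using the basic facts about standard parabolics (in particular $W_i\cap\langle\bigcup_{j\neq i}S_j\rangle=\{e\}$, which follows from the length function) one gets $W=W_1\times\dots\times W_k$ with each $(W_i,S_i)$ irreducible, and $W$ is finite iff every $W_i$ is. For the uniqueness clause I would argue that the connected components of the Coxeter graph are the intrinsic "indecomposable" factors: any product decomposition of $W$ into irreducible Coxeter systems must match the commuting structure, hence refine and therefore equal this partition. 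So the whole problem reduces to: \emph{classify the finite irreducible Coxeter groups.}

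\textbf{Stage 2: the finiteness criterion.} Introduce the geometric representation of $(W,S)$ on $V=\bigoplus_{s\in S}\mathbb{R}\alpha_s$, equipped with the symmetric bilinear form $B$ with $B(\alpha_s,\alpha_t)=-\cos(\pi/m_{s,t})$, where each generator $s$ acts by the $B$-reflection $\alpha\mapsto\alpha-2B(\alpha,\alpha_s)\alpha_s$. The key input (Tits) is that this representation is \emph{faithful} and that $W$ is finite \emph{if and only if} $B$ is positive definite. The direction ``finite $\Rightarrow$ $B$ positive definite'' I would get by averaging $B$ over $W$ to a $W$-invariant inner product and comparing with $B$ on the fundamental chamber; the converse ``$B$ positive definite $\Rightarrow$ $W$ finite'' by observing that $W$ then sits inside $O(V,B)$ as a group permuting the discrete set of roots $\{w\alpha_s\}$, hence is a discrete and therefore finite subgroup of a compact orthogonal group. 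Faithfulness — which is needed so that positive-definiteness of $B$ is really a statement about $W$ and not merely about the matrix $(m_{s,t})$ — is the technical heart and is proved via the Tits cone together with the exchange/deletion condition.

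\textbf{Stage 3: classifying positive-definite Coxeter graphs.} This is now purely combinatorial: list the connected Coxeter graphs whose form $B$ is positive definite. I would use three facts: (i) \emph{heredity} — deleting a vertex, or lowering a label $m_{s,t}$, preserves positive-definiteness, so it suffices to identify the minimal forbidden configurations; (ii) \emph{forbidden subgraphs} — direct determinant computations show $\det B\le 0$ for the affine diagrams $\widetilde A_n$ (the $n$-cycle), $\widetilde D_n$, $\widetilde E_6,\widetilde E_7,\widetilde E_8$, $\widetilde B_n,\widetilde C_n,\widetilde F_4,\widetilde G_2$, and for a few small indefinite paths (e.g.\ a path carrying two labels $\ge 4$, or an over-long $H$-chain); (iii) \emph{structural consequences} — $\widetilde A_n$ forbidden forces the graph to be a tree, $\widetilde D_4$ forbidden forces all degrees $\le 3$ with at most one branch vertex, and the labelled forbidden diagrams force: a branch point implies all labels are $3$, an edge labelled $\ge 4$ implies the graph is a path with exactly one such edge, labelled $4$ unless the graph is $I_2(m)$ itself or $H_3,H_4$. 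Enumerating the survivors yields exactly $A_n,\ B_n=C_n,\ D_n,\ E_6,E_7,E_8,\ F_4,\ H_3,H_4,\ I_2(m)$ (with $G_2=I_2(6)$), and a final determinant check confirms each is positive definite.

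\textbf{Main obstacle.} The genuinely hard step is Stage 2 — faithfulness of the geometric representation and the equivalence finiteness $\Leftrightarrow$ positive-definiteness — since that is where the full Coxeter machinery (exchange condition, Tits cone, fundamental domain) is used; Stages 1 and 3 are, respectively, formal bookkeeping and a finite, if somewhat lengthy, determinant computation. As the text indicates, in this paper one simply cites \cite[Appendix A1]{BB} and \cite[Chapter 2]{hump} for the complete argument.
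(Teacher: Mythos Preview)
Your outline is correct and is exactly the classical proof found in the references the paper cites; note however that the paper itself does not prove this theorem at all but simply states it and refers the reader to \cite[Appendix A1]{BB} and \cite[Chapter 2]{hump}, as you yourself acknowledge in your final sentence.
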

\begin{obs}
Groups of type $A_n$ are symmetric groups, in particular $A_{n}=S_{n+1}$. The groups of type $B_n$, called hyperoctahedral groups, are the groups of permutations $\pi$ of the set $\{\pm 1,\ldots,\pm n\}$ such that $\pi(a)=-\pi(a)$. Finally $D_n$ is the subgroup of $B_n$ of permutations such that an even number of positive elements has negative image. $D_n$ is called even hyperoctahedral group.
\end{obs}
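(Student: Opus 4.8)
The statement is the classical identification of the Coxeter groups of types $A_n$, $B_n$, $D_n$ with (subgroups of) groups of (signed) permutations; the plan is to prove all three by the same two-step scheme. \emph{Step one}: build an explicit surjective homomorphism from the abstract Coxeter group onto the proposed permutation group, by sending each Coxeter generator to a concrete (signed) transposition and checking that the finitely many braid and commutation relations encoded in the Coxeter graph are satisfied. \emph{Step two}: upgrade surjectivity to an isomorphism by a cardinality count, comparing with the orders $|W(A_n)|=(n+1)!$, $|W(B_n)|=2^n n!$, $|W(D_n)|=2^{n-1}n!$ of the finite Coxeter groups involved.

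For type $A_n$ the Coxeter graph is the unlabelled path $s_1-s_2-\cdots-s_n$, so the defining relations are $s_i^2=e$, $(s_is_{i+1})^3=e$, and $(s_is_j)^2=e$ for $|i-j|\ge 2$. Sending $s_i$ to the adjacent transposition $(i,i+1)\in S_{n+1}$ respects all of them --- the braid relation because $(i,i+1)(i+1,i+2)$ is a $3$-cycle --- so one obtains a homomorphism $\phi\colon W(A_n)\to S_{n+1}$, which is onto since adjacent transpositions generate $S_{n+1}$. As $|W(A_n)|=(n+1)!=|S_{n+1}|$, the map $\phi$ is an isomorphism.

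For $B_n$ and $D_n$, view signed permutations of $\{\pm1,\ldots,\pm n\}$ (those $\pi$ with $\pi(-a)=-\pi(a)$, so $\pi$ is determined by its values on $1,\ldots,n$) as acting on $\mathbb{R}^n$ by permuting and sign-changing coordinates. For $B_n$, send the generator at the end of the $4$-labelled edge to the sign change $e_1\mapsto -e_1$ and the remaining generators, which form an unlabelled path, to the coordinate swaps $e_{i-1}\leftrightarrow e_i$; one checks the single order-$4$ relation (the product of the first two generators is a signed $4$-cycle) together with the type-$A$ relations and the commutations. These images generate the full hyperoctahedral group, since the swaps give all coordinate permutations and conjugating the single sign change by them yields all $2^n$ sign patterns; hence $W(B_n)\twoheadrightarrow B_n$, and this is an isomorphism because both groups have order $2^n n!$. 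For $D_n$ keep the coordinate swaps and replace the sign-change generator by the ``branch'' generator $u$ with $u(e_1)=-e_2$, $u(e_2)=-e_1$; then $u$ is an involution commuting with the swap $e_1\leftrightarrow e_2$, satisfying an order-$3$ relation with the swap $e_2\leftrightarrow e_3$, and commuting with the remaining swaps, exactly matching the type-$D$ diagram. Each of these generators changes the number of negative coordinates by an even amount, so the image lies in the even hyperoctahedral group; conversely that index-$2$ subgroup is generated by them, and the count $|W(D_n)|=2^{n-1}n!$ forces the surjection to be an isomorphism.

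The only place genuine content is hidden is Step two, i.e. the assertion that the abstract Coxeter groups really have the stated finite orders --- equivalently, that no relations beyond the braid relations collapse them. The cleanest self-contained way to get this, without quoting the order formulas as a black box, is Tits' theorem that the geometric (reflection) representation of a Coxeter group is faithful: for the Coxeter graphs of types $A_n$, $B_n$, $D_n$ this representation is realised inside the (signed) permutation action described above, so its faithfulness gives the injectivity of $\phi$ directly, after which the cardinality of the image is computed on the permutation side. Verifying the finitely many Coxeter relations and exhibiting generating sets of the symmetric and (even) hyperoctahedral groups is then routine bookkeeping; the one point to watch is the parity constraint in the $D_n$ case, which is precisely what pins down the even hyperoctahedral group rather than the whole of $B_n$.
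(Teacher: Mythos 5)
Your proposal is mathematically sound, but note that the paper does not prove this statement at all: it is stated as a Remark recording the standard identification of the type $A$, $B$, $D$ Coxeter groups with the symmetric, hyperoctahedral and even hyperoctahedral groups, with the classification material deferred to \cite{BB} and \cite{hump}. So there is no in-paper argument to compare against; what you have written is essentially the textbook proof that those references contain (map Coxeter generators to adjacent transpositions, the sign change, resp.\ the reflection in $e_1+e_2$; verify the braid relations; check surjectivity; conclude by the order formulas or by faithfulness of the geometric representation). Two small points of care: first, invoking Tits' faithfulness requires you to identify the (signed) permutation action restricted to the appropriate subspace with the geometric representation attached to the Coxeter matrix (i.e.\ check that the simple reflections act as reflections with the prescribed pairwise orders and that the form matches), which you gesture at but should state explicitly if you want the argument self-contained; otherwise you are implicitly using the order formulas $|W(A_n)|=(n+1)!$, $|W(B_n)|=2^n n!$, $|W(D_n)|=2^{n-1}n!$ as external input, which is also acceptable but should be flagged as such. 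Second, you silently (and correctly) repaired the paper's typo: the defining condition for a signed permutation is $\pi(-a)=-\pi(a)$, not $\pi(a)=-\pi(a)$ as printed in the Remark.
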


We continue with some classical definitions in Coxeter theory. 
Given an element $w$ in $W$, this can be written as a product of elements in $S$
\[
w=s_1\ldots s_k. 
\]
If $k$ is the minimal length of all the possible expressions for $w$, we say that $k$ is the length of $w$ and we write $\ell(w)=k$. We define in $W$ the set of reflections as the union of all the conjugates of $S$, $T:=\cup_{w\in W}wSw^{-1}$. The definitions of length and reflections allow us to define a partial order on the set $W$:

\begin{defin}
Given $w\in W$ and $t\in T$, if $w'=tw$ and
$\ell(w')<\ell(w)$ we write $w'\rightarrow w$.
 Given two elements $v,w\in W$ we say that 
 $v \geq w$ according to the Bruhat order if there are 
 $w_0,\ldots, w_k\in W$ such that
\[
v=w_0\leftarrow w_1\ldots w_{k-1}\leftarrow w_k=w.
\]
\end{defin}
Thus we obtain an order on $W$. The undirected Bruhat graph associated to a Coxeter group, denoted $B(W)$, is the graph whose set of vertices is $W$ and such that there is an edge between two vertices $w,v$ if and only if $w\rightarrow v$ or $v\rightarrow w$.

Another graph associated to the Bruhat order is its Hasse graph. We denote it by $H(W)$, its vertices are the elements of $W$, there is an edge between two elements if and only if one covers the other according to the Bruhat order.
In the following proposition we describe the pairs of adjacent vertices in $H(A_n)$, in Section \ref{subsec2} we present the analogous results for $H(B_n)$ (Proposition \ref{coveringB}) and $H(D_n)$ (Proposition \ref{propdn}).

\begin{prop}\label{coveringA}
Let $\pi$ and $\sigma$ be two elements in $A_n$, then $\pi$ covers $\sigma$ if and only if there exist $1\leq i<k\leq n+1$ such that:
\begin{itemize}
\item$\sigma=(ab)\pi$ (namely $\pi=[\ldots,b,\ldots,a,\ldots]$ and $\sigma=[\ldots,a,\ldots,b,\ldots]$);
\item $b=\pi(i)>\pi(k)=a$;
\item there is no $i<j<k$ such that $a<\pi(j)<b$.
\end{itemize}
\end{prop}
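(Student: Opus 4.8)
The plan is to derive the statement from two standard facts about the Bruhat order together with one explicit inversion count. I would first recall (see \cite[\S2.1--2.2]{BB}) that the Bruhat order on a Coxeter group is graded by the length function, so that $\sigma$ is covered by $\pi$ if and only if $\sigma<\pi$ and $\ell(\pi)=\ell(\sigma)+1$; that in this situation $\pi\sigma^{-1}\in T$; and that, conversely, for $t\in T$ one has $t\pi<\pi$ precisely when $\ell(t\pi)<\ell(\pi)$. In $A_n=S_{n+1}$ the reflections are exactly the transpositions $(ab)$ with $1\le a<b\le n+1$, and left multiplication $\sigma=(ab)\pi$ just interchanges the entries $a$ and $b$ in the one‑line notation of $\pi$. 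For the length to decrease, $b$ must occur to the left of $a$ in $\pi$, i.e.\ there are positions $i<k$ with $\pi(i)=b$ and $\pi(k)=a$; this is exactly the configuration described by the first two bullet points. Hence, for both implications of the proposition, what remains is to decide when $\ell(\sigma)=\ell(\pi)-1$ rather than merely $\ell(\sigma)<\ell(\pi)$.

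To that end I would compute $\ell(\pi)-\ell(\sigma)$ directly, using that for a permutation the length equals its number of inversions, by comparing the inversions of $\pi$ and $\sigma$ pair by pair. Since $\pi$ and $\sigma$ agree outside positions $i$ and $k$, only pairs meeting $\{i,k\}$ can contribute, and for a fixed index $j\notin\{i,k\}$ the two relevant pairs are $\{j,i\}$ and $\{j,k\}$. A short case analysis on the position of $j$ (namely $j<i$, $i<j<k$, $j>k$) and on the value $\pi(j)$ relative to $a$ and $b$ shows: if $j<i$ or $j>k$ the two pairs contribute the same total number of inversions to $\pi$ and to $\sigma$ (the roles of $a$ and $b$ merely swap); the pair $\{i,k\}$ itself is an inversion of $\pi$ but not of $\sigma$; and for $i<j<k$ the pair $\{i,j\}$ together with $\{j,k\}$ contributes two more inversions to $\pi$ than to $\sigma$ when $a<\pi(j)<b$, and equally many otherwise. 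Summing gives the identity
\[
\ell(\pi)-\ell(\sigma)=1+2\,\bigl|\{\, j : i<j<k,\ a<\pi(j)<b \,\}\bigr|.
\]

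The proposition then follows immediately: $\ell(\pi)=\ell(\sigma)+1$ holds if and only if there is no $j$ with $i<j<k$ and $a<\pi(j)<b$, which is the third bullet point; and in every instance of the configuration above we already have $\ell(\sigma)<\ell(\pi)$ and $\sigma=(ab)\pi$ with $(ab)\in T$, hence $\sigma<\pi$. Combined with the grading of the Bruhat order, this yields that $\pi$ covers $\sigma$ exactly under the three stated conditions. The only genuinely computational part is the inversion bookkeeping leading to the displayed identity; I do not expect a real obstacle there, the mild subtlety being only to organise the case split on $j$ cleanly. (One could instead bypass the two cited facts via the subword characterisation of the Bruhat order, but the length computation would be needed either way.)
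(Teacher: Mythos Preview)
Your argument is correct and complete: the inversion count yielding $\ell(\pi)-\ell(\sigma)=1+2\,|\{j:i<j<k,\ a<\pi(j)<b\}|$ is exactly the standard computation, and combined with the grading of Bruhat order and the fact that $t\pi<\pi\iff\ell(t\pi)<\ell(\pi)$ it gives both directions cleanly.

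As for comparison with the paper: the paper does not supply its own proof of this proposition at all; it simply refers the reader to \cite[Lemma~2.1.4]{BB} (written there as ``Lemma 2.i.4''). Your write-up is essentially the argument one finds in that reference, so there is no real methodological difference to discuss---you have just made explicit what the paper leaves to the citation.
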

A proof of Proposition \ref{coveringA} can be found in
\cite[Lemma 2.i.4]{BB}, yet with a slightly different, though equivalent,  statement. In Section \ref{subsec2} we also present results about the maximal number of edges of $H(B_n)$ and $H(D_n)$, an analogue result for type $A_n$ is proved in \cite{adinroich}.

\section{Ricci curvature of Hasse diagrams of the Bruhat order}
\label{subsec2}
This Section is devoted to the study of the Ricci curvature of the Hasse diagram associated to the Bruhat order of finite irreducible Coxeter groups. These graphs, that we denote by $H(W)$, where $W$ is a Coxeter group, whose set of vertices coincides with the set of elements of $W$. Two vertices are adjacent by an edge if one of the two corresponding elements of $W$ covers the other according to the strong Bruhat order.

Unlike the cases of Bruhat graphs and weak order graphs, in general the multiplication by an element of $W$ does not induce a graph automorphism and two elements may have non-isomorphic length-2 path subgraphs. Therefore the study of the global Ricci curvature doesn't coincide with the local study. For this reason most of the results of this Section are bounds of the global Ricci curvature. The only exception is the case of dihedral groups where we have been able to find exact values of the Ricci curvature of the Hasse diagrams. Even if it is not sufficient to work locally on a single vertex, in the case of dihedrals the structure of $H(I_2(m))$ allows us to recover the Ricci curvature studying only a finite number of length-2 path subgraphs.

\begin{prop}\label{H(W)dihedral}
The following identities hold for the Ricci curvature of the Hasse diagram of the dihedral groups:
\begin{itemize}
\item $\ric(H(I_2(3)))=\frac{21-\sqrt{33}}{12}$,
\item $\ric(H(I_2(4)))=\frac{1}{2}$,
\item $\ric(H(I_2(5)))=\frac{5-\sqrt{17}}{2}$,
\item $\ric(H(I_2(n)))=0$ for any $n>5$.
\end{itemize}
\end{prop}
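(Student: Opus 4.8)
The plan is to compute the Ricci curvature at each vertex of $H(I_2(m))$ directly, exploiting the fact that the Hasse diagram of a dihedral group is extremely simple: it is a "hexagon-like" graph consisting of two maximal chains (one starting $s_1 s_2 s_1 \cdots$, the other $s_2 s_1 s_2 \cdots$) of length $m$ joined at the identity $e$ at the bottom and at the longest element $w_0$ at the top, with no other edges. Concretely, every vertex has degree $1$ or $2$ except $e$ and $w_0$, which have degree $2$, and the only subtlety is that for small $m$ the two chains are short enough that the length-$2$ neighborhoods of $e$ and $w_0$ (and the vertices adjacent to them) "see" each other. Since $H(I_2(m))$ is bipartite (it is a cycle of length $2m$, in fact), it is triangle-free, so by Remark \ref{triang} and Theorem \ref{teomatrix} the local curvature at $x$ is the smallest eigenvalue of the associated matrix $A(x)$, built only from the length-$2$ path subgraph of $x$.

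The key steps, in order: (1) Describe $H(I_2(m))$ explicitly as a $2m$-cycle, identify the five "types" of length-$2$ path subgraphs that can occur — a vertex in the "interior" of a chain (two degree-$2$ neighbors, clean), a vertex adjacent to $e$ or $w_0$, the vertex $e$ (resp. $w_0$) itself, and the degenerate small cases $m=3,4,5$ where chains have length $\leq 3$ so these neighborhoods overlap or coincide. (2) For each type, write down the length-$2$ path subgraph and the matrix $A(x)$ from Theorem \ref{teomatrix} (using the triangle-free $\Gamma_2$ formula of Remark \ref{triang}), then compute its minimal eigenvalue. (3) For $m > 5$ observe that every length-$2$ path subgraph is one of a bounded list of "large-$m$-stable" shapes, independent of $m$, and check that the minimum over all of them of the smallest eigenvalue is exactly $0$ — the $0$ being attained, e.g., at a vertex deep inside a chain where the subgraph is a path $x$–$B(1,x)$–$B(2,x)$ and a suitable harmonic-type function $f$ makes $\Gamma_2(f)(x)=0$. (4) For $m=3,4,5$ do the finite eigenvalue computations for the (now genuinely different) small configurations and read off $\tfrac{21-\sqrt{33}}{12}$, $\tfrac12$, $\tfrac{5-\sqrt{17}}{2}$ respectively; since $\Ric(H(I_2(m))) = \inf_x \Ric(H(I_2(m)))_x$ and there are only finitely many vertex types, the infimum is a minimum and equals the claimed value.

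The main obstacle I expect is bookkeeping in step (2)–(4): the matrix $A(x)$ from \cite{sic3} (Theorem \ref{teomatrix}) encodes the quadratic form $2\Gamma_2(f)(x)$ on the variables $f(v)$, $v \in B(1,x)\cup B(2,x)$, and one must be careful about which vertices of $B(2,x)$ are shared between the two neighbors in $B(1,x)$ — precisely the overlaps that differ between small and large $m$. In particular, for $m=3$ the group has only $6$ elements and $w_0$ has both chains meeting it after two steps, so the "top" and "bottom" neighborhoods are tightly coupled and the $6\times 6$ (or reduced) eigenvalue problem must be solved carefully to get $\tfrac{21-\sqrt{33}}{12}$; similarly $m=4$ gives $H(I_2(4))$ an $8$-cycle with a special symmetry forcing the clean value $\tfrac12$, and $m=5$ a $10$-cycle giving $\tfrac{5-\sqrt{17}}{2}$. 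The conceptual content — that for $m>5$ the chains are long enough that the worst vertex behaves like an interior path vertex with curvature $0$ — is straightforward once the finite case analysis is organized; the risk is purely in not mis-handling a shared $B(2,x)$-vertex.
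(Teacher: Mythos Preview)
Your plan has a genuine gap at the very first step: the Hasse diagram of the \emph{Bruhat} order on $I_2(m)$ is not a $2m$-cycle. What you describe---two maximal chains from $e$ to $w_0$ with no other edges---is the Hasse diagram of the \emph{weak} order. In the Bruhat order on a dihedral group one has $u\le v$ if and only if $\ell(u)\le\ell(v)$, so every element of length $k$ (for $0<k<m$) covers \emph{both} elements of length $k-1$ and is covered by \emph{both} elements of length $k+1$. Hence $e$ and $w_0$ have degree $2$, while every other vertex has degree $4$; the graph is a ``ladder with crossbars,'' not a cycle. A quick sanity check: on the $2m$-cycle with $m\ge 3$, the triangle-free $\Gamma_2$ formula gives local curvature $0$ at every vertex (take $f(u_i)=2f(v_i)$ and $f(v_1)=-f(v_2)$), so your model would predict $\ric(H(I_2(3)))=0$, contradicting the stated value $\tfrac{21-\sqrt{33}}{12}$.

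The paper's proof proceeds exactly along the lines you outline \emph{after} this correction: once one has the right graph, there are (up to isomorphism) only six possible length-$2$ path subgraphs as $m$ varies---corresponding to $x=e$ (or $w_0$), to $x$ at distance $1$ or $2$ from an endpoint, and to the generic interior vertex, with the small-$m$ cases producing overlaps. For each type one writes down the matrix $A(x)$ of Theorem~\ref{teomatrix} (sizes $2\times 2$, $3\times 3$, or $4\times 4$, since $|B(1,x)|\le 4$) and reads off the minimal eigenvalue. The generic interior type gives minimal eigenvalue $0$, which is why $\ric(H(I_2(m)))=0$ for $m>5$; the special endpoint-adjacent types that only occur for $m=3,4,5$ produce the three exceptional values. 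So your overall strategy is correct, but you must redo the enumeration of length-$2$ path subgraphs starting from the correct degree-$4$ ladder graph rather than a cycle.
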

\begin{proof}
We use Theorem \ref{teomatrix}. Notice that in $H(I_2(m))$  only six length-2 path subgraphs appear up to isomorphism, see Figures \ref{pica}-\ref{picf}.

\begin{figure}
\begin{minipage}[b]{14cm}
\begin{minipage}[b]{3.4 cm}
\centering
  \begin{tikzpicture}[scale=.6]
\foreach \Point in {(0,0), (1,1),(-1,1),(-1,2), (1,2)}{
    \node at \Point {\textbullet};
}
\node[shape=circle,draw=black] () at (0,0) {} ;
    \draw[thick] (0,0) -- +(-1,1);
    \draw[thick] (0,0) -- +(1,1);
    \draw[thick] (1,1) -- +(-2,1);
    \draw[thick] (1,1) -- +(0,1);
    \draw[thick] (-1,1) -- +(2,1);
    \draw[thick] (-1,1) -- +(0,1);
  \end{tikzpicture}
\caption{A}\label{pica}
\end{minipage}
\ \hspace{2mm} \hspace{5mm} \
\begin{minipage}[b]{3.4cm}
\centering
  \begin{tikzpicture}[scale=.6]
\foreach \Point in {(0,0), (1,1),(-1,1),(-1,2), (1,2),(0,3)}{
    \node at \Point {\textbullet};
}
\node[shape=circle,draw=black] () at (-1,1) {} ;
    \draw[thick] (0,0) -- +(-1,1);
    \draw[thick] (0,0) -- +(1,1);
    \draw[thick] (1,1) -- +(-2,1);
    \draw[thick] (1,1) -- +(0,1);
    \draw[thick] (-1,1) -- +(2,1);
    \draw[thick] (-1,1) -- +(0,1);
    \draw[thick] (-1,2) -- +(1,1);
    \draw[thick] (1,2) -- +(-1,1);
  \end{tikzpicture}
\caption{B}
\end{minipage}
\ \hspace{5mm} \hspace{5mm} \
\begin{minipage}[b]{3.4cm}
\centering
  \begin{tikzpicture}[scale=.6]
\foreach \Point in {(0,0), (1,1),(-1,1),(-1,2), (1,2),(-1,3),(1,3)}{
    \node at \Point {\textbullet};
}
\node[shape=circle,draw=black] () at (-1,1) {} ;
    \draw[thick] (0,0) -- +(-1,1);
    \draw[thick] (0,0) -- +(1,1);
    \draw[thick] (1,1) -- +(-2,1);
    \draw[thick] (1,1) -- +(0,2);
    \draw[thick] (-1,1) -- +(2,1);
    \draw[thick] (-1,1) -- +(0,2);
    \draw[thick] (-1,2) -- +(2,1);
    \draw[thick] (1,2) -- +(-2,1);
  \end{tikzpicture}
\caption{C}
\end{minipage}
\end{minipage}
\end{figure}

\begin{figure}
\begin{minipage}[b]{14cm}
\begin{minipage}[b]{3.4 cm}
\centering
  \begin{tikzpicture}[scale=.6]
\foreach \Point in {(0,0), (1,1),(-1,1),(-1,2), (1,2),(-1,3),(1,3),(0,4)}{
    \node at \Point {\textbullet};
}
\node[shape=circle,draw=black] () at (-1,2) {} ;
    \draw[thick] (0,0) -- +(-1,1);
    \draw[thick] (0,0) -- +(1,1);
    \draw[thick] (1,1) -- +(-2,1);
    \draw[thick] (1,1) -- +(0,2);
    \draw[thick] (-1,1) -- +(2,1);
    \draw[thick] (-1,1) -- +(0,2);
    \draw[thick] (-1,2) -- +(2,1);
    \draw[thick] (1,2) -- +(-2,1);
    \draw[thick] (-1,3) -- +(1,1);
    \draw[thick] (1,3) -- +(-1,1);
  \end{tikzpicture}
\caption{D}
\end{minipage}
\ \hspace{2mm} \hspace{5mm} \
\begin{minipage}[b]{3.4 cm}
\centering
  \begin{tikzpicture}[scale=.6]
\foreach \Point in {(0,0), (1,1),(-1,1),(-1,2), (1,2),(-1,3),(1,3),(1,4),(-1,4)}{
    \node at \Point {\textbullet};
}
\node[shape=circle,draw=black] () at (-1,2) {} ;
    \draw[thick] (0,0) -- +(-1,1);
    \draw[thick] (0,0) -- +(1,1);
    \draw[thick] (1,1) -- +(-2,1);
    \draw[thick] (1,1) -- +(0,3);
    \draw[thick] (-1,1) -- +(2,1);
    \draw[thick] (-1,1) -- +(0,3);
    \draw[thick] (-1,2) -- +(2,1);
    \draw[thick] (1,2) -- +(-2,1);
    \draw[thick] (-1,3) -- +(2,1);
    \draw[thick] (1,3) -- +(-2,1);
  \end{tikzpicture}
\caption{E}
\end{minipage}
\ \hspace{5mm} \hspace{5mm} \
\begin{minipage}[b]{3.4 cm}
\centering
  \begin{tikzpicture}[scale=.6]
\foreach \Point in {(1,0),(-1,0),(1,1),(-1,1),(-1,2), (1,2),(-1,3),(1,3),(1,4),(-1,4)}{
    \node at \Point {\textbullet};
}
\node[shape=circle,draw=black] () at (-1,2) {} ;
    \draw[thick] (1,0) -- +(-2,1);
    \draw[thick] (-1,0) -- +(2,1);
    \draw[thick] (1,1) -- +(-2,1);
    \draw[thick] (1,0) -- +(0,4);
    \draw[thick] (-1,1) -- +(2,1);
    \draw[thick] (-1,0) -- +(0,4);
    \draw[thick] (-1,2) -- +(2,1);
    \draw[thick] (1,2) -- +(-2,1);
    \draw[thick] (-1,3) -- +(2,1);
    \draw[thick] (1,3) -- +(-2,1);
  \end{tikzpicture}
\caption{F}\label{picf}
\end{minipage}
\end{minipage}
\end{figure}

The matrices associated to these neighbourhoods according to Theorem \ref{teomatrix} are listed below, together with their spectrum:\\
 \begin{tabular}{cc}
\begin{minipage}{0.4\textwidth}

\[
M_A=
\begin{bmatrix}
\frac{5}{2} & -1         \\
-1          &\frac{5}{2}
\end{bmatrix}
\]
\[
\Spec(M_A)=\biggl[\frac{7}{2},\frac{3}{2} \biggr].
\]

\end{minipage}
&
\begin{minipage}{0.4\textwidth}
\[
M_B=
\begin{bmatrix}
\frac{7}{3} & -\frac{2}{3} & \frac{1}{3}\\
-\frac{2}{3}& \frac{7}{3}  &\frac{1}{3} \\
\frac{1}{3} & \frac{1}{3}  &\frac{11}{6}
\end{bmatrix}
\]
\[
\Spec(M_B)=\biggl[3, \frac{21-\sqrt{33}}{12},\frac{21+\sqrt{33}}{12} \biggr]
\]
\end{minipage}
\end{tabular}

 \begin{tabular}{cc}
\begin{minipage}{0.4\textwidth}
\[
M_C=
\begin{bmatrix}
\frac{17}{6} & -\frac{5}{3} & \frac{1}{3}\\
-\frac{5}{3}& \frac{17}{6}  & \frac{1}{3}\\
\frac{1}{3} & \frac{1}{3}   & \frac{11}{6} 
\end{bmatrix}.
\]
\[
\Spec(M_C)=\biggl[\frac{9}{2},\frac{3}{2}+\frac{\sqrt{3}}{3},\frac{3}{2}-\frac{\sqrt{3}}{3}\biggr]
\]
\end{minipage}
&
\begin{minipage}{0.4\textwidth}
\[
M_D=
\begin{bmatrix}
2 & -\frac{1}{2}&\frac{1}{2} & \frac{1}{2} \\    
-\frac{1}{2} & 2 & \frac{1}{2} &\frac{1}{2}\\
\frac{1}{2} & \frac{1}{2} & 2 & -\frac{1}{2}\\
\frac{1}{2} & \frac{1}{2} & -\frac{1}{2}& 2             \end{bmatrix}
\]
\[
\Spec(M_D)=\biggl[\frac{5}{2},\frac{5}{2},\frac{5}{2},\frac{1}{2}\biggr].
\]
\end{minipage}
\end{tabular}

 \begin{tabular}{cc}
\begin{minipage}{0.4\textwidth}
\[
M_E=
\begin{bmatrix}
\frac{5}{2} & -\frac{3}{2} & \frac{1}{2} &\frac{1}{2}\\
-\frac{3}{2}& \frac{5}{2}  &\frac{1}{2} & \frac{1}{2} \\
\frac{1}{2} & \frac{1}{2}  & 2 & -\frac{1}{2}\\
\frac{1}{2} & \frac{1}{2} & -\frac{1}{2} & 2
\end{bmatrix}
\]
\[
\Spec(M_E)\biggl[4,\frac{5}{2},\frac{5+\sqrt{17}}{4},\frac{5-\sqrt{17}}{4}\biggr]
\]
\end{minipage}
&
\begin{minipage}{0.4\textwidth}
\[
M_F=
\begin{bmatrix}
\frac{5}{2} & -\frac{3}{2} & \frac{1}{2}& \frac{1}{2}\\
-\frac{3}{2}& \frac{5}{2} & \frac{1}{3} & \frac{1}{2}\\
\frac{1}{2} & \frac{1}{2} & \frac{5}{2} & -\frac{3}{2}\\
\frac{1}{2} & \frac{1}{2} & -\frac{3}{2} &
\frac{5}{2}
\end{bmatrix}.
\]
\[
\Spec(M_F)=\biggl[4,4,2,0\biggr]
\]
\end{minipage}
\end{tabular}

And the result follows.
\end{proof}

Our next goal is to work with the discrete curvature of the Hasse diagrams of types $A_n$, $B_n$ and $D_n$.
The idea we used in Proposition \ref{H(W)dihedral}
is of no help here, it is actually quite simple to show that in these cases there are infinitely many isomorphism classes of length-2 path subgraphs.
The result we obtained for type $A,B$ and $D$
are the inequalities stated below:

\begin{teo}\label{H(W)1}
The following inequalities hold for the discrete Ricci curvature  of the graphs $H(A_n), H(B_n)$ and $H(D_n)$.
$$
\Ric(H(A_{n-1}))\geq -\lfloor \frac{n^2}{2}\rfloor-2n+8;
$$ 
$$
\Ric(H(B_n))\geq 4(-2n+1)   \text{  for }n\leq 5,\quad
\Ric(H(B_n)) \geq -2\lfloor\frac{n^2}{2}\rfloor-2n+6
   \text{  for }n\geq 5;
$$ 
$$
\Ric(H(D_n)) \geq -2\lfloor\frac{n^2}{2}\rfloor-2n+6.
$$
\end{teo}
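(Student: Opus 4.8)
The plan is to reduce everything to the curvature estimate for triangle-free graphs, namely Theorem \ref{corostima}, and to feed it the maximum-degree values announced in the two unnamed theorems of the introduction. The first point I would check is that the Hasse diagrams $H(A_{n-1})$, $H(B_n)$ and $H(D_n)$ are triangle-free. This is immediate from the grading of the Bruhat order: the Hasse diagram only connects elements whose lengths differ by exactly one, so any cycle — in particular a triangle — would have to alternate up and down in length, which is impossible for an odd cycle. Hence Theorem \ref{corostima} applies and gives, for each of these graphs $G$,
\[
\Ric(G)\geq 4-\max_{\{x,y\}\in\mathcal{E}(G)}\frac{3d(x)+d(y)}{2}.
\]
Bounding $\max(3d(x)+d(y))$ crudely by $4\,d_{\max}$, where $d_{\max}$ is the maximum degree of $G$, yields $\Ric(G)\geq 4-2d_{\max}$.

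Next I would insert the known maximum-degree values. For type $A$, the result of \cite{adinroich} gives $d_{\max}(H(A_{n-1}))=\lfloor n^2/4\rfloor+\lfloor n/2\rfloor$ (or whatever the precise statement there is); substituting into $\Ric\geq 4-2d_{\max}$ must reproduce $-\lfloor n^2/2\rfloor-2n+8$, so I would verify that $2\big(\lfloor n^2/4\rfloor+\lfloor n/2\rfloor\big)=\lfloor n^2/2\rfloor+2n-4$ by a parity case-split on $n$ — a routine check. For types $B_n$ with $n\geq 5$ and $D_n$ with $n\geq 3$, the two introduction theorems give $d_{\max}=\lfloor n^2/2\rfloor+n-1$; substituting into $4-2d_{\max}$ gives exactly $4-2\lfloor n^2/2\rfloor-2n+2=-2\lfloor n^2/2\rfloor-2n+6$, matching the claimed bounds. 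For the small cases $n\leq 5$ in type $B$ one instead uses the trivial bound: in a graph of max degree $d_{\max}$ one always has $3d(x)+d(y)\leq 4d_{\max}$, and $d_{\max}(H(B_n))\leq 2n-1+\,(\text{number of non-adjacent transpositions})$ — more simply, one can bound $d_{\max}(H(B_n))$ by the total number of reflections that can act nontrivially, and plug the (possibly non-sharp) value into $4-2d_{\max}$; for $n\leq 5$ the stated $4(-2n+1)=-8n+4=4-2(4n)$ corresponds to using $d_{\max}\leq 4n$, a bound that holds for all $n$ and is cited or checked directly.

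The main obstacle is not in this section at all: it is establishing the two maximum-degree theorems for $B_n$ and $D_n$, whose proofs are deferred to Subsections \ref{dmaxbn} and \ref{dmaxdn} via the auxiliary graphs $\Gamma(\pi)$. Granting those, the derivation here is essentially bookkeeping. The one genuine subtlety to get right is whether to use the sharpened bound $\max(3d(x)+d(y))$ over \emph{edges} rather than $4d_{\max}$: an adjacent pair $\{x,y\}$ in $H(W)$ has lengths differing by one, so their degrees are typically close but not equal, and in principle using the edge-wise maximum could improve the constant. I would first attempt the clean statement with $4d_{\max}$, confirm it yields exactly the announced right-hand sides for $A$, $B$ ($n\geq 5$) and $D$, and only if a discrepancy appears fall back to analyzing the degrees of Bruhat-adjacent pairs more carefully. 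I expect the $4d_{\max}$ route to work verbatim, so the proof of Theorem \ref{H(W)1} should consist of: (i) the triangle-free observation, (ii) one application of Theorem \ref{corostima} with the bound $3d(x)+d(y)\leq 4d_{\max}$, and (iii) substitution of the three maximum-degree values together with the parity check in type $A$ and the elementary degree bound $d_{\max}(H(B_n))\leq 4n$ used for $n\leq 5$.
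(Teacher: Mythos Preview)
Your approach is exactly the paper's: observe that the Hasse diagrams are triangle-free (the paper simply asserts this; your grading argument is the correct justification), apply Theorem~\ref{corostima} in the crude form $\Ric(G)\geq 4-2d_{\max}(G)$, and substitute the maximum-degree values coming from \cite{adinroich} and from Sections~\ref{dmaxbn} and \ref{dmaxdn}.

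Two small numerical points to fix. First, the Adin--Roichman value the paper uses is $d_{\max}(H(A_{n-1}))=\lfloor n^2/4\rfloor+n-2$, not $\lfloor n^2/4\rfloor+\lfloor n/2\rfloor$; with this and the identity $2\lfloor n^2/4\rfloor=\lfloor n^2/2\rfloor$ (valid for all $n$) the substitution gives $-\lfloor n^2/2\rfloor-2n+8$ directly, no parity split required. Second, for $B_n$ with $n\leq 5$ the paper invokes the exact value $d_{\max}(H(B_n))=4(n-1)$, obtained by a SageMath check. Your reverse-engineered $d_{\max}\leq 4n$ does reproduce the stated bound $4(-2n+1)$, but note that plugging in $4(n-1)$ actually gives the slightly stronger $12-8n$; the mismatch is a minor slip in the paper's theorem statement rather than in your argument. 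Drop the aside that $d_{\max}(H(B_n))\leq 4n$ ``holds for all $n$'': it fails once $n\geq 7$.
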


\begin{proof}
The graphs are all triangle free, this allows us to use Theorem \ref{corostima}, we can bound $\Ric(G)$ as follows
\begin{equation}\label{sopra}
 \Ric(G)\geq 4-\frac{3d(x)+d(y)}{2}\geq 4-2d_{\max}(G),
\end{equation}
 where $d_{\max}(G)$ is the maximum degree of a vertex in $G$. For $A_{n-1}$ it is known from \cite[section 3]{adinroich} that $d_{max}(H(A_{n-1}))=\lfloor \frac{n^2}{4}\rfloor+n-2$. For type $B_n$ the maximum degree of a vertex is $4(n-1)$ for $n\leq 5$ and 
$\lfloor \frac{n^2}{2}\rfloor+n-1$ otherwise; this is proved in Section \ref{dmaxbn}. For type $D_n$ the maximum degree of an element in the Hasse graph is $\lfloor \frac{n^2}{2}\rfloor+n-1$, this is proved in Section \ref{dmaxdn}. Applying these results in equation (\ref{sopra}) we conclude.
\end{proof}

A similar reasoning leads to a bound of the Ricci curvature of Hasse diagrams of some exceptional irreducible Coxeter group.

\begin{prop}\label{H(W)}
The following inequalities hold for the discrete Ricci curvature  of the graphs $H(F_4),H(E_6)$ and $H(H_3)$.
$$
\Ric(H(F_4))\geq -28;
$$ 
$$
\Ric(H(E_6))\geq -46;
$$ 
$$
\Ric(H(H_3)) \geq -14.
$$
\end{prop}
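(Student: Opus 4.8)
The plan is to mirror exactly the argument used for Theorem \ref{H(W)1}, since the three exceptional groups $F_4$, $E_6$, $H_3$ are finite and small enough to be handled by direct computation. First I would observe that the Hasse diagram of the Bruhat order is triangle-free for every Coxeter system: a triangle would require three elements with pairwise covering relations, which is impossible since covers strictly increase length by $1$, so any path of covers of length $2$ changes the length by $2$ and cannot close up. Hence Theorem \ref{corostima} applies and gives
\[
\Ric(H(W))\geq 4-\max_{\{x,y\}\in\mathcal{E}(H(W))}\frac{3d(x)+d(y)}{2}\geq 4-2\,d_{\max}(H(W)),
\]
so it suffices to bound $d_{\max}$ from above for each of the three groups.

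Next I would compute $d_{\max}(H(W))$ for $W\in\{F_4,E_6,H_3\}$ explicitly. Since these groups have respectively $1152$, $51840$ and $120$ elements, this is a finite check that can be carried out with SageMath (as the introduction already indicates was done for $E_6$, $F_4$, $H_3$): for each element $w$ count the number of $t\in T$ with $\ell(tw)=\ell(w)\pm1$ and $tw$ covering or covered by $w$, i.e. the number of Bruhat covers and cocovers, and take the maximum over $W$. Plugging the resulting values into the displayed inequality should yield the stated bounds: $\Ric(H(F_4))\geq 4-2d_{\max}$ forces $d_{\max}(H(F_4))=16$, giving $-28$; similarly $d_{\max}(H(E_6))=25$ gives $-46$, and $d_{\max}(H(H_3))=9$ gives $-14$. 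I would present these three maximal-degree values as the content of a short lemma (or simply cite the SageMath computation referenced in Section 3) and then conclude by substitution.

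Alternatively, if one wants to avoid the slightly lossy step $\tfrac{3d(x)+d(y)}{2}\leq 2d_{\max}$, one could instead use Theorem \ref{teomatrix} directly and compute, for each vertex $x$, the smallest eigenvalue of the matrix $A(x)$ attached to its length-2 path subgraph, taking the infimum over all $x\in W$; this is again a finite computation for these three groups and would in principle give exact values rather than lower bounds. For the present proposition, however, the coarser bound via Theorem \ref{corostima} suffices, so I would stick with the first, simpler route to keep the proof parallel to that of Theorem \ref{H(W)1}.

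The only genuine obstacle is the correctness and completeness of the finite computation: one must be sure the maximum degree is attained and correctly identified for each $W$ (in particular for $E_6$, whose order is large but still well within reach of SageMath's Coxeter group machinery and its built-in Bruhat-order routines). There is no conceptual difficulty beyond this — the triangle-free observation is immediate and Theorem \ref{corostima} does the rest.
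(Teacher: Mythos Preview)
Your proposal is correct and follows essentially the same route as the paper: apply the triangle-free bound $\Ric(G)\geq 4-2d_{\max}(G)$ from Theorem~\ref{corostima} and compute $d_{\max}$ by machine, obtaining $16$, $25$, $9$ for $F_4$, $E_6$, $H_3$ respectively. The only cosmetic difference is that the paper uses a Maple program rather than SageMath for $H_3$.
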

\begin{proof}
We apply again inequality \ref{sopra} used to prove the above theorem. This time the values for the maximal degree of the graphs are computed using bruhat\textunderscore upper\textunderscore
covers and bruhat\textunderscore lower \textunderscore covers on SageMath (\cite{sage}) (for $E_6$ and $F_4$) and program Mongelli2  for Maple \cite{maple} (for $H_3$). We obtain the following values:
\[
d_{max}(H(F_4))=16;\quad d_{max}(H(E_6))=25;\quad d_{max}(H(H_3))=9.
\]
An the statement follows.
\end{proof}

\subsection{On the maximum degree in the Hasse diagram of hyperoctahedral group}
\label{dmaxbn}

In this section we study the maximal degree in the Hasse diagram of Bruhat order in $B_n$ ($H(B_n)$). The result that we obtain is the following:
\begin{teo}\label{teobn}
The maximum degree of the Hasse diagram of the strong Bruhat order in $B_n$ with $n\geq 5$ is 
\[
\lfloor \frac{n^2}{2}\rfloor+n-1.
\]
\end{teo}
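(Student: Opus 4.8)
The plan is to attack this via the covering relations in $H(B_n)$, i.e. we want to maximize over $\pi \in B_n$ the number of elements that cover $\pi$ plus the number of elements covered by $\pi$. The first step is to establish an analogue of Proposition \ref{coveringA} for $B_n$ (this will be Proposition \ref{coveringB}): a combinatorial description of which transpositions-by-a-reflection $t\pi$ are actual covers of $\pi$ in the strong Bruhat order. In $B_n$ the reflections $t\in T$ come in two flavors — those swapping the values at positions $i,j$ (and simultaneously at $-i,-j$), and those negating the value at a single position (together with possibly a paired swap) — so the covering description splits into cases. As announced in the introduction, the key device is the graph $\Gamma(\pi)$ on a vertex set built from $\{1,\dots,n\}$ (or $\{\pm1,\dots,\pm n\}$) whose edges are in bijection with the covers/cocovers of $\pi$; so the second step is to define $\Gamma(\pi)$ precisely and verify that $\#\mathcal{E}(\Gamma(\pi)) = d_{H(B_n)}(\pi)$, where the "no intermediate value" condition from the type-$A$ covering criterion becomes a "consecutiveness" condition that translates into an edge being present exactly when two indices are adjacent in the relevant linear/circular order induced by $\pi$.

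With that reduction in hand, the third step is the genuine combinatorial optimization: maximize the number of edges of $\Gamma(\pi)$ over all $\pi\in B_n$. I expect the structure of $\Gamma(\pi)$ to be something like a union of a bounded number of paths (or a path plus a few extra edges coming from sign-change reflections), so that counting edges reduces to counting adjacent pairs among the $n$ positions under the permutation order, split according to sign patterns. The natural candidate extremal element is one with a highly "layered" one-line notation — something analogous to the type-$A$ maximizer in \cite{adinroich} but adjusted for the negative entries — and the count should come out to $\lfloor n^2/2\rfloor + n - 1$ as the sum of a $\lfloor n^2/2\rfloor$-type contribution from the value-swap reflections and an $n-1$ contribution from sign-change reflections (or some similar bookkeeping). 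I would first compute $\#\mathcal{E}(\Gamma(\pi))$ for this candidate to confirm the target value, then prove the upper bound by a case analysis or an exchange/smoothing argument showing that any local modification of $\pi$ cannot increase the edge count beyond this.

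The main obstacle will be the upper bound in step three: the lower bound is just exhibiting a good $\pi$, but showing $\#\mathcal{E}(\Gamma(\pi)) \le \lfloor n^2/2\rfloor + n - 1$ for \emph{all} $\pi$ requires controlling the interaction between the two types of reflections simultaneously — the sign-change covers and the value-swap covers both depend on $\pi$ in competing ways, and an arrangement that creates many cheap sign-change edges tends to destroy value-swap edges and vice versa. A secondary technical annoyance is the hypothesis $n\ge5$: for small $n$ the "generic" extremal shape degenerates and the maximum is instead $4(n-1)$, so the argument must identify exactly where the crossover happens, presumably by comparing the two closed-form expressions and checking the boundary cases $n=2,3,4$ (and $n=5$, where the two formulas may coincide or nearly so) directly. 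I would handle those small cases by explicit computation and reserve the structural argument for $n\ge5$ (or $n$ large, with a finite check bridging the gap).
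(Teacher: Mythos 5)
Your steps 1 and 2 (the covering criterion for $B_n$, i.e.\ Proposition \ref{coveringB}, and the graph $\Gamma(\pi)$ whose edge count equals $d_{H(B_n)}(\pi)$) coincide with the paper's setup, and your plan for the lower bound (exhibit a layered extremal element, here $[1,\ldots,m,-n,\ldots,-(m+1)]$) is also what the paper does. But the heart of the theorem is the upper bound, and there your proposal has a genuine gap. First, your structural guess is wrong: $\Gamma(\pi)$ is not ``a union of a bounded number of paths'' --- for the extremal element it has $\lfloor n^2/2\rfloor+n-1$ edges, i.e.\ quadratically many, so no bookkeeping of boundedly many adjacent pairs can give the count. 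Second, the ``exchange/smoothing'' or unspecified case analysis you defer to is not a proof and does not match what actually makes the bound work. The paper's argument is an induction on $n$: one shows (analogue of Adin--Roichman's Lemma 3.2) that every $\Gamma(\pi)$ has a vertex of degree at most $n+1$, and --- crucially --- of degree at most $n$ when $n$ is odd (a long two-part case analysis on $\tilde\Gamma(\pi)$); one then deletes such a vertex $a$, observes that the remaining edges embed into $\Gamma(\pi')$ for the element $\pi'\in B_{n-1}$ obtained by removing $a$ from the window notation, and applies the induction hypothesis. The parity-refined bound is not optional: using only ``degree $\le n+1$'' at an odd step gives $\lfloor (n-1)^2/2\rfloor+(n-1)-1+(n+1)=\lfloor n^2/2\rfloor+n$, one more than the claimed maximum, so without the ``$\le n$ for odd $n$'' lemma the induction fails. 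Your proposal contains neither the reduction-by-vertex-deletion idea nor this degree lemma, so the decisive step is missing.

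A smaller point: you correctly sense that small $n$ must be handled separately, but in the paper the role of $n=5$ is specifically as the base case of the induction (checked by computer, maximum degree $16$, attained e.g.\ by $[1,2,-5,-4,-3]$), while $n=2,3,4$ fall under the separate formula $4(n-1)$ and are outside the statement; identifying $n=5$ as the anchor of the induction, rather than merely ``where the crossover happens,'' is part of what your outline would need to supply.
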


First we start by studying the covering relations in such a graph:

\begin{prop}\label{coveringB}
Given $\pi$ and $\sigma$ in $B_n$, we have that $\sigma$ covers $\pi$ according to the Bruhat order if and only if $\sigma=s \pi$ where $s$ is a reflection of the following three types:
\begin{itemize}
\item[1)] $s=(\pi(i),\pi(j))(-\pi(i),-\pi(j))$ with $0<i<j$, $\pi(i)<\pi(j)$ and for every $i<k<j$ $	\pi(k)\notin [\pi(i),\pi(j)]$; 
\item[2)]  $s=(\pi(i),-\pi(i))$ such that $i>0$, $\pi(i)>0$ and for every $j$, $0<j<i$ $\pi(j)\notin [-\pi(i),\pi(i)]$;
\item[3)] $s=(\pi(i),-\pi(j))(\pi(j),-\pi(i))$ with $i,j>0$, $sign(\pi(i))=sign(-\pi(j))$, $-\pi(j)<\pi(i)$ and such that  given $k\in [-j,i]$, $\pi(k)>\pi(i)$ or $\pi(k)<-\pi(j)$.
\end{itemize}
\end{prop}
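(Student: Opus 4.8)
The plan is to reduce the statement to the standard Coxeter‑theoretic description of cover relations, and then, for each of the three kinds of reflection occurring in $B_n$, to compute explicitly the change of length under left multiplication.

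\emph{Reduction.} I would first recall (see \cite{BB}) that $\sigma$ covers $\pi$ in the Bruhat order if and only if $\sigma=t\pi$ for some reflection $t\in T$ with $\ell(\sigma)=\ell(\pi)+1$; moreover $\ell(t\pi)>\ell(\pi)$ already forces $t\pi>\pi$, and the Bruhat order is graded by $\ell$, so it suffices to decide, for each reflection $t$, exactly when $\ell(t\pi)=\ell(\pi)+1$. The reflection set of $B_n$ has $n^2$ elements: the \emph{window transpositions} $(\pi(i),\pi(j))(-\pi(i),-\pi(j))$ with $i,j>0$, the \emph{sign changes} $(\pi(i),-\pi(i))$ with $i>0$, and the \emph{mixed transpositions} $(\pi(i),-\pi(j))(\pi(j),-\pi(i))$ with $i,j>0$. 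The first and third families together are all the transposition‑type reflections, partitioned according to whether the two values they exchange sit at positions of the same sign or of opposite sign in $\pi$ (one can always pass to the complementary pair of exchanged values, so this partition is well defined). In each case left multiplication by $t$ changes the window of $\pi$ in an elementary way: transpose the entries in positions $i$ and $j$; negate the entry in position $i$; or replace the entries $\pi(i),\pi(j)$ in positions $i,j$ by $-\pi(j),-\pi(i)$.

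\emph{Length computation.} I would then use a combinatorial length formula for $B_n$, such as $\ell_{B}(w)=\operatorname{inv}(w)+\operatorname{neg}(w)+\operatorname{nsp}(w)$ (see \cite{BB}), where $\operatorname{inv}$ counts the inversions of the window, $\operatorname{neg}$ the negative window values and $\operatorname{nsp}$ the pairs of positions whose values have negative sum, and evaluate the change of each term. For a window transposition $\operatorname{neg}$ and $\operatorname{nsp}$ are unchanged, so $\ell(t\pi)-\ell(\pi)=\operatorname{inv}(t\pi)-\operatorname{inv}(\pi)$ and the analysis is verbatim the one in type $A$: the difference is $1$ exactly when $\pi(i)<\pi(j)$ and no position strictly between $i$ and $j$ carries a value in $[\pi(i),\pi(j)]$, which is the first bullet (and Proposition \ref{coveringA}). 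For a sign change, a short count over the pairs containing position $i$ gives $\ell(t\pi)-\ell(\pi)=1+2\lvert\{\,k:\ 0<k<i,\ \lvert\pi(k)\rvert<\pi(i)\,\}\rvert$, so the move is increasing precisely when $\pi(i)>0$, and it equals $1$ iff no position $k$ with $0<k<i$ carries a value in $[-\pi(i),\pi(i)]$, which is the second bullet. For a mixed transposition, combining the contributions of $\operatorname{inv}$, $\operatorname{neg}$ and $\operatorname{nsp}$ and reading $\pi$ also on negative positions via $\pi(-k)=-\pi(k)$, one obtains $\ell(t\pi)-\ell(\pi)=1+2\lvert\{\,p:\ -j<p<i,\ -\pi(j)<\pi(p)<\pi(i)\,\}\rvert$, with $p$ ranging over $\{-n,\dots,-1,1,\dots,n\}$; here the hypotheses $\operatorname{sign}(\pi(i))=\operatorname{sign}(-\pi(j))$ and $-\pi(j)<\pi(i)$ are exactly what makes this an increasing move, and the difference is $1$ iff no such $p$ exists, which is the third bullet.

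\emph{Main obstacle.} The genuinely delicate case is the mixed transposition: there all three statistics change simultaneously and asymmetrically in $i$ and $j$, so one must track carefully which ordered pairs of positions flip their contribution to $\operatorname{inv}$ and to $\operatorname{nsp}$, and then recombine the three partial counts into the single expression $1+2m$, with $m$ the number of relevant intermediate signed positions. One must also fix the normalisation — whether $i<j$ or $i>j$, and which of $\pi(i),\pi(j)$ is positive — and check that exchanging the roles of $i$ and $j$ together with the substitution $p\mapsto -p$ interchanges the two sign patterns, so that the single condition of the third bullet covers every case without overlap or omission. The window‑transposition and sign‑change cases, by contrast, reduce to short bookkeeping.
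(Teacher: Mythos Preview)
Your approach---computing $\ell_B(s\pi)-\ell_B(\pi)$ directly from the formula $\ell_B=\operatorname{inv}+\operatorname{neg}+\operatorname{nsp}$---is valid and genuinely different from the paper's. The paper never touches this length formula: it uses instead that the Bruhat order on $B_n$ is the order induced from $S_{[-n,n]}$ (Proposition~\ref{s2n}) and, for each reflection $s$, analyses the interval $[\pi,s\pi]$ inside $S_{2n}$. For type~1 and for the opposite-sign instance of type~3 that interval has rank~$2$ in $S_{2n}$ and its two intermediate elements are not signed permutations, so $s\pi$ covers $\pi$ in $B_n$; for type~2 the $B_n$- and $S_{2n}$-covers coincide. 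Your route is self-contained and avoids the embedding; the paper's is quicker once Proposition~\ref{s2n} and the type-$A$ covering criterion are in hand.

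There is, however, a genuine gap in your treatment of the mixed transposition. Your claim that the hypotheses $\operatorname{sign}(\pi(i))=\operatorname{sign}(-\pi(j))$ and $-\pi(j)<\pi(i)$ are ``exactly what makes this an increasing move'' is false: when $\pi(i)$ and $\pi(j)$ have the \emph{same} sign one still has $\ell(s\pi)>\ell(\pi)$. For instance $\pi=[2,1]\in B_2$ with $s=(2,-1)(1,-2)$ gives $s\pi=[-1,-2]$, an increasing move with $\ell$ jumping from $1$ to $4$. Your formula $1+2m$ is not valid in this regime either (here $m=0$ but $\Delta\ell=3$): the two $S_{2n}$-transpositions composing $s$ now interact, and $\Delta\operatorname{neg}=\pm2$ enters. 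So your argument as written does not exclude same-sign mixed reflections from giving covers, and the ``normalisation'' you invoke cannot repair this, since no relabelling of $i,j$ turns a same-sign pair into an opposite-sign one. To close the gap you must handle this case separately---either by redoing the length computation with the interaction term, or by observing (as the paper does) that $(\pi(i),-\pi(i))\pi$ is a signed permutation lying strictly between $\pi$ and $s\pi$.
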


To prove this Proposition first we need the following fact which is a consequence of \cite[Corollary 8.1.9]{BB}:
\begin{prop}\label{s2n}
The Bruhat order on $B_n$ is a subposet of $S_{2n}=S_{[-n,\ldots,n]}$ and
\[
u\leq v \text{ in }B_n \Leftrightarrow u\leq v \text{ in }S_{[-n,\ldots,n]}
\]
for any $u,v\in B_n\subset S_{[-n,\ldots,n]}$.
\end{prop}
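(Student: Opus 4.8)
The plan is to derive Proposition~\ref{s2n} from a standard combinatorial criterion for Bruhat order in the symmetric group; in the paper it can be recorded as a one-line consequence of \cite[Corollary~8.1.9]{BB}, and what follows is the reasoning behind that. First I would fix the realization of $B_n$ inside $S_{2n}=S_{[-n,\dots,n]}$: a signed permutation $\pi$ (so $\pi(-a)=-\pi(a)$) is already a permutation of the $2n$-element totally ordered set $\{-n<\dots<-1<1<\dots<n\}$, and this inclusion is an injective group homomorphism identifying $B_n$ with the centralizer in $S_{2n}$ of the involution $a\mapsto -a$, equivalently with the fixed subgroup of the order-two diagram automorphism of $A_{2n-1}$. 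I would stress at the outset that this embedding preserves neither Coxeter length nor the reflection set (the $B_n$-reflections $(\pi(i),-\pi(i))$ map to transpositions, but the $B_n$-reflections $(\pi(i),\pi(j))(-\pi(i),-\pi(j))$ map to products of two disjoint transpositions), so the subword/lifting properties of the Bruhat order cannot be transferred by any purely group-theoretic argument; a combinatorial characterization is unavoidable.

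Next I would invoke a rank-function (tableau) criterion for Bruhat order: for $u,v$ in a symmetric group on a finite totally ordered set one has $u\le v$ iff $|\{a\le i:\ u(a)\le j\}|\ \ge\ |\{a\le i:\ v(a)\le j\}|$ for all positions $i,j$ (see \cite[Ch.~2]{BB}). Applied inside $S_{2n}$ to a pair $u,v\in B_n$, the symmetry $w(-a)=-w(a)$ induces a symmetry of the associated rank array of $w$, relating its value at an index pair $(i,j)$ to its value at the complementary pair; as a result the whole family of inequalities over all $(i,j)$ becomes equivalent to the subfamily obtained by letting $(i,j)$ range over, say, the pairs with $i,j>0$. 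That subfamily is exactly the intrinsic Bruhat criterion for $B_n$ as formulated in \cite[Corollary~8.1.9]{BB} --- which is where the index bookkeeping for the $B_n$-criterion is carried out --- whence $u\le v$ in $B_n$ iff $u\le v$ in $S_{2n}$, as claimed.

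A more structural alternative is the folding $A_{2n-1}\to C_n=B_n$: since $B_n=\Fix(\tau)$ for the diagram automorphism $\tau$ of $S_{2n}$, and Bruhat order is known to be inherited by such fixed subgroups, it then only remains to check that the Coxeter structure on $\Fix(\tau)$ obtained this way is the usual one on $B_n$. Either way, the main obstacle is precisely the discrepancy flagged above: because length and reflections do not transport cleanly under $B_n\hookrightarrow S_{2n}$, the equivalence is not formal, and the only genuine work is the sign-and-index bookkeeping that collapses the $S_{2n}$ rank conditions onto the $B_n$ ones --- bookkeeping already done in \cite[Corollary~8.1.9]{BB}, so that in the paper this proposition reduces to citing it.
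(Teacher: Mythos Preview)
Your proposal is correct and matches the paper's approach: the paper records Proposition~\ref{s2n} without proof, simply stating it as a consequence of \cite[Corollary~8.1.9]{BB}, which is exactly what you identify as the key input. Your additional sketch of the rank-function argument and the folding alternative goes beyond what the paper provides, but the core route---reduce to the tableau criterion in \cite{BB}---is the same.
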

Therefore we recall the result of Proposition \ref{coveringA}:
\begin{prop}
Let $\pi$ and $\sigma$ be two elements in $A_n$, $\pi$ covers $\sigma$ if and only if there exist $1\leq i<k\leq n+1$:
\begin{itemize}
\item$\sigma=(ab)\pi$, $\pi=[\ldots,b,\ldots,a,\ldots]$ and $\sigma=[\ldots,a,\ldots,b,\ldots]$.
\item $b:=\pi(i)>\pi(k)=a$
\item There is no $i<j<k$ such that $a<\pi(j)<b$.
\end{itemize}
\end{prop}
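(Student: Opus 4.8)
The plan is to derive this classical characterization from two standard ingredients: the general structure of covering relations in the Bruhat order of a Coxeter group, and the explicit description of the length function on $S_{n+1}$ via inversions. Recall that for $A_n = S_{n+1}$ one has $\ell(\pi) = \mathrm{inv}(\pi) := \#\{(p,q) : p<q,\ \pi(p)>\pi(q)\}$, and that the set of reflections $T$ consists exactly of the value-transpositions $(ab)$ with $1\le a<b\le n+1$. These facts are recalled in Subsection \ref{ssec1} or available in \cite{BB}.

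First I would invoke the general fact from Coxeter theory that the Bruhat order is graded by $\ell$, so that the covering relations are precisely the pairs $\{\sigma,\pi\}$ with $\pi = t\sigma$ for some reflection $t\in T$ and $|\ell(\pi)-\ell(\sigma)|=1$. This reduces the statement to the following problem: given that $\sigma=(ab)\pi$, determine exactly when $\ell(\sigma)=\ell(\pi)-1$. Note that multiplying on the left by $(ab)$ interchanges the two \emph{positions} at which the \emph{values} $a$ and $b$ occur in the one-line notation of $\pi$; being careful about this left-versus-right convention is what makes the identities $\sigma(i)=a$, $\sigma(k)=b$ match the first bullet.

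Next I would carry out the inversion count. Write $a<b$ and let $i,k$ be the two positions with $\{\pi(i),\pi(k)\}=\{a,b\}$. For the length to \emph{decrease} the swapped pair must be an inversion of $\pi$ turned into a non-inversion of $\sigma$, which forces $\pi(i)=b>a=\pi(k)$ with $i<k$; this produces the first two bullets. The core of the argument is then a short case analysis: for each position $j$ with $i<j<k$, compare the contributions of the pairs $(i,j)$ and $(j,k)$ to $\mathrm{inv}$ before and after the swap. If $\pi(j)<a$ or $\pi(j)>b$ the net contribution is $0$, whereas if $a<\pi(j)<b$ the net contribution is $-2$. Adding the $-1$ contributed by the pair $(i,k)$ itself gives $\ell(\pi)-\ell(\sigma)=1+2m$, where $m$ is the number of positions $j$ with $i<j<k$ and $a<\pi(j)<b$.

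Finally, combining the two steps, $\pi$ covers $\sigma$ precisely when $\ell(\pi)-\ell(\sigma)=1$, i.e.\ when $m=0$, which is exactly the third bullet. I expect the only genuinely substantive input to be the graded-cover fact for the Bruhat order, which I would cite from \cite{BB}; the remaining work is essentially bookkeeping, and the main obstacle is simply organizing the inversion comparison cleanly and keeping the value/position conventions straight throughout.
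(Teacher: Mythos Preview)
Your argument is correct and is essentially the standard proof: reduce to computing $\ell(\pi)-\ell(\sigma)$ for $\sigma=(ab)\pi$ via the inversion formula, obtain $1+2m$, and conclude using that the Bruhat order is graded by length. The paper does not supply its own proof of this proposition; it simply cites \cite[Lemma~2.1.4]{BB}, and what you have written is precisely the argument given there, so your approach coincides with the one the paper defers to.

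One minor remark on presentation: for the converse direction you implicitly use not only that the poset is graded but that $\sigma<\pi$ in Bruhat order whenever $\sigma=t\pi$ with $\ell(\sigma)<\ell(\pi)$; this is immediate from the definition of the order via the relation $\to$ recalled in Subsection~\ref{ssec1}, but it is worth making explicit so that both implications are visibly complete.
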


Now we can give the following proof:
\begin{proof}[Proof of Theorem \ref{coveringB}]
 $\sigma$ covers $\pi$ in $B_n$ if and only if $\sigma> \pi$ in the strong Bruhat order of $S_{2n}=S_{[-n,\ldots,n]}$ and there are no elements of $\theta\in B_{n}\subset S_{2n}=S_{[-n,\ldots,n]}$ such that $\sigma>\theta>\pi$.

We consider $s$ of type $1$, we notice that in $S_{2n}$ $\sigma>\pi$
and $\ell(\sigma)-\ell(\pi)=2$. This means that the interval $[\pi,\sigma]$ has the following shape (\cite[Lemma 2.7.3]{BB}):

\begin{center}
\begin{tikzpicture}[scale=1]
     \draw (0,0) node[anchor=north]  {$\pi$};
     \draw (-0.5,1) node[anchor=east]  {$(a,b)\pi$};
     \draw (0.5,1) node[anchor=west]  {$(-a,-b)\pi$};
     \draw (0,2) node[anchor=east]  {$\sigma$};
\foreach \Point in {(0,0), (0,2),(-0.5,1),(0.5,1)}{
    \node at \Point {\textbullet};
}
    \draw[thick] (0,0) -- +(-0.5,1);
    \draw[thick] (0,0) -- +(0.5,1);
    \draw[thick] (0,2) -- +(0.5,-1);
    \draw[thick] (0,2) -- +(-0.5,-1);
  \end{tikzpicture}
\end{center} 
It follows that there are no elements of $B_n$ strictly contained in $[\pi,\sigma]$.  

Assume now $s$ of type $2$. In this case $\sigma$ covers $\pi$ in $B_n$ if and only if it covers $\pi$ in $S_{2n}$, and this happens if and only if the condition required for $0<j<i$ is verified.

We consider at last the case of $s$ a type $3$ reflection. Reasoning as for type $1$ reflections we obtain that $\sigma$ covers $\pi$. Also, assuming $sign(\pi(i))\neq sign(\pi(j))$ the condition on $k$ is necessary and sufficient in order to have a covering.

Now it is left to prove that in case of $(\pi(i),-\pi(j))(-\pi(i),\pi(j))$ $i,j>0$, if $sign(\pi(i))= sign(\pi(j))$ $\sigma$ doesn't cover $\pi$. Clearly $\sigma$ doesn't cover $\pi$ if there is a $-j<k<i$ such that $min(\pi(i),-\pi(j))<\pi(k)<max(\pi(i),-\pi(j))$. We assume that such a $k$ doesn't exist. Now it is sufficient to notice that $s'=(\pi(i),-\pi(i))$ satisfies the hypothesis of case $2$. 
In $S_{2n}$ we have that $\pi< s'\pi< \sigma$, in particular $\pi$ is not covered by $\sigma$.
\end{proof}

Given an element $\pi\in B_n$, we associate to it two  graphs. The first one is $\Gamma(\pi)$ with set of vertices $\{1,\ldots,n\}$ and edges as follows:
\begin{itemize}
\item an undirected edge between $a$ and $b$ if $|\ell((a,b)(-a,-b)\pi)-\ell(\pi)|=1$, which is to say that $\{(a,b)(-a,-b)\pi,\pi\}$ is an edge in the Hasse diagram of $B_n$;
\item a loop at $a$ if $(a,-a)$ is an edge starting from $\pi$ in the Bruhat order of $B_{n}$;
\item an undirected edge between $a$ and $b$ labeled with a minus sign ($-$) if $|\ell((a,-b)$ $(-a,b)\pi)$ $-\ell(\pi)|=1$ .
\end{itemize}

The second graph is $\tilde{\Gamma}(\pi)$ and is defined as follows:
\begin{itemize}
\item the set of vertices is $\{\pm 1,\ldots,\pm n\}$;
\item there is an undirected edge from $i$ to $j$ if $j\neq \pm i$ and $(i,j)(-i,-j)\pi$ is adjacent to  $\pi$ in $H(B_n)$;
\item there is an undirected edge from $i$ to $-i$ if  $(i,-i)$ is adjacent to $\pi$ in $H(B_n)$.
\end{itemize}

\begin{obs}\label{archigamma}
We note that given a vertex $i$ of $\Gamma(\pi)$ then 
\[
d_{\Gamma}(i)=d_{\tilde{\Gamma}}(i)=d_{\tilde{\Gamma}}(-i).
\]

Also, the total number of edges in $\Gamma(\pi)$ coincides with the degree of $\pi$ in $H(B_n)$.
\end{obs}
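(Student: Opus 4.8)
The plan is to reduce everything to two facts: a classification of the reflections of $B_n$, and the sign-reversal symmetry of $\tilde{\Gamma}(\pi)$. Recall the classical fact that the reflections of $B_n$, viewed inside $S_{[-n,\ldots,n]}$, are exactly the elements $t^+_{ab}=(a,b)(-a,-b)$ and $t^-_{ab}=(a,-b)(-a,b)$ for $1\le a<b\le n$, together with $t_a=(a,-a)$ for $1\le a\le n$ (there are $\binom{n}{2}+\binom{n}{2}+n=n^2$ of them, which is the total number of reflections). By construction the potential edges of $\Gamma(\pi)$ are in bijection with these reflections: an unlabeled edge $\{a,b\}$ with $t^+_{ab}$, a minus-labeled edge $\{a,b\}$ with $t^-_{ab}$, and a loop at $a$ with $t_a$; and such an edge is actually present precisely when the corresponding reflection $s$ satisfies $|\ell(s\pi)-\ell(\pi)|=1$. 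Since for a reflection $s$ this condition is equivalent to $\{s\pi,\pi\}$ being an edge of $H(B_n)$ (the Bruhat order is graded by length and a reflection always renders $\pi$ and $s\pi$ comparable), and since by Proposition \ref{coveringB} every edge of $H(B_n)$ incident to $\pi$ is of the form $\{s\pi,\pi\}$ for a unique reflection $s$, this yields a bijection between the edge set of $\Gamma(\pi)$ and the set of edges of $H(B_n)$ incident to $\pi$. That already proves the last assertion, $|\mathcal{E}(\Gamma(\pi))|=d_{H(B_n)}(\pi)$.

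For the degree identities I would first treat $\tilde{\Gamma}(\pi)$. Consider the sign-reversal map $\iota\colon j\mapsto -j$ on its vertex set $\{\pm 1,\ldots,\pm n\}$. I claim $\iota$ is a graph automorphism of $\tilde{\Gamma}(\pi)$: for $j\neq\pm i$ the transpositions $(i,j)$ and $(-i,-j)$ are disjoint and hence commute, so $(i,j)(-i,-j)=(-i,-j)(i,j)$, and therefore $\{i,j\}$ is an edge of $\tilde{\Gamma}(\pi)$ exactly when $\{-i,-j\}=\{\iota(i),\iota(j)\}$ is; moreover $\iota$ sends the edge $\{i,-i\}$ to itself. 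Consequently $\iota\in\mathrm{Aut}(\tilde{\Gamma}(\pi))$, and in particular $d_{\tilde{\Gamma}}(i)=d_{\tilde{\Gamma}}(-i)$ for every vertex $i$.

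Next I would prove $d_{\Gamma}(i)=d_{\tilde{\Gamma}}(i)$ for $i\in\{1,\ldots,n\}$ by a direct edge-by-edge matching. Partition the edges of $\tilde{\Gamma}(\pi)$ incident to $i$ according to the other endpoint $j$ into three classes: (a) $j=b$ with $1\le b\le n$ and $b\neq i$; (b) $j=-b$ with $1\le b\le n$ and $b\neq i$; (c) $j=-i$. Since $(i,b)(-i,-b)=t^+_{ib}$ and $(i,-b)(-i,b)=t^-_{ib}$ (and these reflections are symmetric in $i,b$), class (a) is in bijection with the unlabeled edges of $\Gamma(\pi)$ at $i$, class (b) with the minus-labeled edges of $\Gamma(\pi)$ at $i$, and class (c) (a single possible edge, via $t_i$) with the loop at $i$. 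Summing, and adopting the convention that a loop contributes $1$ to the degree (which is consistent with counting it as a single edge, as used in the last assertion), we get $d_{\Gamma}(i)=d_{\tilde{\Gamma}}(i)$, and combining with the previous paragraph, $d_{\Gamma}(i)=d_{\tilde{\Gamma}}(i)=d_{\tilde{\Gamma}}(-i)$.

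The whole argument is essentially bookkeeping, so I do not expect a genuine obstacle; the only point requiring care is to reconcile the classification of reflections into $t^+_{ab},t^-_{ab},t_a$ with the ``type $1$, $2$, $3$'' classification of Proposition \ref{coveringB}: the type $1$ and type $3$ reflections there are not disjoint families of group elements but are, as abstract elements of $B_n$, of the form $t^+_{ab}$ or $t^-_{ab}$ according to the signs of $\pi(i)$ and $\pi(j)$, so one must make sure that every reflection is accounted for exactly once and matched to the correct kind of edge of $\Gamma(\pi)$. Getting the loop convention and this identification straight is the crux; everything else is the two immediate observations above.
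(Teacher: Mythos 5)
Your argument is correct: the identification of the three edge types of $\Gamma(\pi)$ with the reflections $(a,b)(-a,-b)$, $(a,-b)(-a,b)$, $(a,-a)$ of $B_n$, the sign-reversal automorphism $j\mapsto -j$ of $\tilde{\Gamma}(\pi)$, and the loop-counts-once convention together give exactly the stated equalities. The paper offers no proof of this remark, treating it as immediate from the definitions of $\Gamma(\pi)$ and $\tilde{\Gamma}(\pi)$, and your write-up is precisely the bookkeeping that is being left implicit there.
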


\begin{example}
We give an example of $\Gamma(\pi)$ and $\tilde{\Gamma}(\pi)$ when $\pi=[4,-3,2,-1]\in B_{4}$. The coverings of $\pi$ are $(4,-4)\pi$, $(2,-2)\pi$, $(2,3)(-2,-3)\pi$, $(1,3)(-1,-3)\pi$, $(1,-2)(-1,2)\pi$ and $(3,-4)(-3,4)\pi$. The elements covered by $\pi$ are 
$(3,-3)\pi$, $(1,-1)\pi$, $(3,4)(-3,-4)\pi$ $(2,4)(-2,-4)\pi$, $(1,2)(-1,-2)\pi$ and $(-2,3)(2,-3)\pi$. Therefore the graphs associated to $\pi$ are the ones in Figures \ref{gamma}, \ref{tildegamma} (the edges labeled with a minus sign are represented with a dotted line).

\begin{figure}
\begin{minipage}[b]{7cm}
\centering
\begin{tikzpicture}[scale=1]
     \draw (0,-1) node[anchor=north]  {$3$};
     \draw (0,1) node[anchor=west]  {$1$};
     \draw (-1,0) node[anchor=north]  {$4$};
     \draw (1,0) node[anchor=north]  {$2$};
\foreach \Point in {(0,-1), (0,1),(-1,0),(1,0)}{
    \node at \Point {\textbullet};
}
\path
(1,0) edge [bend right] node {} (0,-1)
(1,0) edge [dotted, bend left] node {} (0,-1)
(0,1) edge [bend right] node {} (1,0)
(0,1) edge [dotted, bend left] node {} (1,0)
(0,-1) edge [bend right] node {} (-1,0)
(0,-1) edge [dotted, bend left] node {} (-1,0)
(1,0)  edge [loop] node {} (1,0)
(-1,0)  edge [loop] node {} (-1,0)
(0,1)  edge [loop] node {} (0,1)
(0,-1)  edge [loop] node {} (0,-1)
(0,1) edge [] node {-} (0,-1)
(1,0) edge [] node {-} (-1,0);
\end{tikzpicture}
\caption{$\Gamma([4,-3,2,-1])$}\label{gamma}
\end{minipage}
\ \hspace{2mm} \hspace{2mm} \
\begin{minipage}[b]{5cm}
\begin{tikzpicture}[scale=1]
     \draw (-1.39,0.57) node[anchor=east]  {$4$};
     \draw (-1.39,-0.57) node[anchor=east]  {$-4$};
     \draw (1.39,0.57) node[anchor=west]  {$2$};
     \draw (1.39,-0.57) node[anchor=west]  {$-2$};
     \draw (0.57,-1.39) node[anchor=north]  {$3$};
     \draw (-0.57,-1.39) node[anchor=north]  {$-3$};
     \draw (0.57,1.39) node[anchor=south]  {$-1$};
     \draw (-0.57,1.39) node[anchor=south]  {$1$};
\foreach \Point in {(0.57,1.39),(-0.57,1.39),(-0.57,-1.39),(0.57,-1.39),(1.39,-0.57),(1.39,0.57),(-1.39,-0.57),(-1.39,0.57)}{
    \node at \Point {\textbullet};
}
\path
(0.57,1.39) edge [] node {} (1.39,-0.57)
(0.57,1.39) edge [] node {} (1.39,0.57)
(0.57,1.39) edge [] node {} (-0.57,-1.39)
(-0.57,1.39) edge [] node {} (1.39,-0.57)
(-0.57,1.39) edge [] node {} (1.39,0.57)
(-0.57,1.39) edge [] node {} (0.57,-1.39)
(1.39,0.57) edge [] node {} (0.57,-1.39)
(1.39,0.57) edge [] node {} (-0.57,-1.39)
(1.39,0.57) edge [] node {} (-1.39,0.57)
(1.39,-0.57) edge [] node {} (0.57,-1.39)
(1.39,-0.57) edge [] node {} (-0.57,-1.39)
(1.39,-0.57) edge [] node {} (-1.39,-0.57)
(0.57,-1.39) edge [] node {} (-1.39,0.57)
(0.57,-1.39) edge [] node {} (-1.39,-0.57)
(-0.57,-1.39) edge [] node {} (-1.39,0.57)
(-0.57,-1.39) edge [] node {} (-1.39,-0.57)
(0.57,1.39) edge [bend right] node {} (-0.57,1.39)
(-0.57,-1.39) edge [bend right] node {} (0.57,-1.39)
(-1.39,-0.57) edge [bend left] node {} (-1.39,0.57)
(1.39,0.57) edge [bend left] node {} (1.39,-0.57);
\end{tikzpicture}
\caption{$\tilde{\Gamma}([4,-3,2,-1])$}\label{tildegamma}
\end{minipage}
\end{figure}
\end{example}

We begin with the following result, which is the analogue, in type $B$, of \cite[Lemma 3.2]{adinroich}.

\begin{lema}\label{lemman+1}
Let $\pi\in B_n$, there is a vertex in $\Gamma(\pi)$ which has degree at most $n+1$.
\end{lema}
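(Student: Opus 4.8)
The plan is to show that among the $n$ vertices of $\Gamma(\pi)$, some vertex has at most $n+1$ incident edges (counting loops and minus-labeled edges). First I would recall the structure of $\tilde{\Gamma}(\pi)$: its $2n$ vertices are $\{\pm 1,\dots,\pm n\}$, and by Remark \ref{archigamma} the degree of a vertex $i$ in $\Gamma(\pi)$ equals its degree $d_{\tilde\Gamma}(i)=d_{\tilde\Gamma}(-i)$ in $\tilde\Gamma(\pi)$ (where in $\tilde\Gamma$ a loop at $\Gamma$ becomes the single edge $\{i,-i\}$ and a minus-edge $\{a,b\}$ becomes the pair of edges $\{a,-b\},\{-a,b\}$). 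So it suffices to exhibit a pair $\{i,-i\}$ with $d_{\tilde\Gamma}(i)\le n+1$.

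The key combinatorial idea, mirroring \cite[Lemma 3.2]{adinroich} in type $A$, is to look at the position of the smallest (most negative) value, or equivalently to exploit a linear-order argument on the one-line notation of $\pi$ viewed as an element of $S_{[-n,\dots,n]}$ via Proposition \ref{s2n}. Concretely, I would consider the vertex labeled $\pi(1)$ — the value in the first position — or, dually, apply the argument to $\pi^{-1}$ and the position-1 argument there. The point is that an edge of $\tilde\Gamma(\pi)$ incident to $\pi(1)$ comes from a covering move (of type 1, 2, or 3 in Proposition \ref{coveringB}) whose "no element strictly between" condition is highly restrictive when one of the two swapped positions is the first position: there is no position to the left of position $1$, so the interval condition forces the partner value to be essentially a nearest neighbor in a suitable sense. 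I would count separately the edges of each type: type 1 and type 3 edges incident to the vertex in position $1$, and the type 2 (loop) contribution, bounding the total by $n+1$. One clean way to organize this: sort the values $\pi(2),\dots,\pi(n)$ and their negatives; among positions $2,\dots,n$, only those values that are "record-like" relative to $\pi(1)$ can produce an edge, and there are at most $n-1$ positions, plus the loop, plus one more edge from the $-\pi(1)$ side, giving $n+1$.

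The main obstacle I anticipate is bookkeeping across the three edge types simultaneously: a single vertex $a=\pi(1)$ can be the endpoint of ordinary edges (type 1), of a loop (type 2), and of minus-labeled edges (type 3), and in $\tilde\Gamma(\pi)$ these unfold into edges at both $a$ and $-a$, so one must be careful not to double-count or undercount. To handle this I would work entirely in $\tilde\Gamma(\pi)$ (where every covering move contributes edges in a uniform way), fix the vertex $v=\pi(1)$, and show: any neighbor $w$ of $v$ in $\tilde\Gamma(\pi)$ corresponds to a transposition $(v,w)$ acting on $S_{[-n,\dots,n]}$ that is a covering in the Bruhat order, hence (by Proposition \ref{coveringA}, transported via Proposition \ref{s2n}) satisfies a betweenness condition; since $v$ sits in position $1$, this condition pins down $w$ to lie in a set of size at most $n+1$. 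A short case check — whether $w$ is larger or smaller than $v$, and whether $w$ has absolute value already "used up" — completes the count, and then Remark \ref{archigamma} transfers the bound back to $\Gamma(\pi)$.
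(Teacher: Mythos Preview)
Your plan has a genuine gap. The premise that ``there is no position to the left of position~$1$'' is false in the setting that governs the cover relations: via Proposition~\ref{s2n} the Bruhat order on $B_n$ is induced from $S_{[-n,\dots,n]}$, and in that linear order the positions $-n,\dots,-1$ all lie to the \emph{left} of position~$1$. Consequently the type~3 reflections incident to $\pi(1)$ (the minus-labeled edges in $\Gamma(\pi)$, equivalently the edges $\{\pi(1),\pi(-j)\}$ in $\tilde\Gamma(\pi)$) involve nontrivial betweenness conditions over the positions $-j,\dots,-1$, and there is no reason to expect only ``one more edge from the $-\pi(1)$ side''. Your count ``$n-1$ type~1 edges $+$ loop $+$ one more'' simply undercounts the type~3 contribution.

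Concretely, for $\pi=[1,2,-5,-4,-3]\in B_5$ one checks that in $\tilde\Gamma(\pi)$ the vertex $\pi(1)=1$ is adjacent to $2,-5,-4,-3$ (four type~1 edges), to $-1$ (the loop), and to $5,4,3$ (three type~3 edges), so $d_{\tilde\Gamma}(\pi(1))=8>n+1=6$. Thus the direct bound you propose is false, not merely unproven. The paper's argument is structurally different: it works at the \emph{extreme} position~$n$ (whose neighbours in $\tilde\Gamma(\pi)$ all lie at positions $<n$, giving a one-sided monotone structure $U\cup L\cup C$), and then uses a two-step dichotomy: either $d(\pi(n))\le n+1$ and we are done, or $d(\pi(n))\ge n+2$ and the leftmost neighbour $\pi(p)$ of $\pi(n)$ satisfies $|B(1,\pi(p))\cap B(1,\pi(n))|\le 2$, whence $d(\pi(p))\le 2n-d(\pi(n))+2\le n$. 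The passage through a second vertex is essential; no single fixed position is guaranteed to have small degree.
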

\begin{proof}
By Remark \ref{archigamma}, it is sufficient to prove the corresponding statement for $\tilde{\Gamma}(\pi)$.

Assume without loss of generality that $\pi(n)$ is a positive number that labels a vertex in $\tilde{\Gamma}(\pi)$ .
First we notice that the vertices adjacent to $\pi(n)$ can be divided in three sets: $U$, $L$ and $C$. $U$ is the set of $\pi(i)\neq \pm \pi(n)$ such that $\pi$ covers $(\pi(i),\pi(n))(-\pi(i),-\pi(n))$ $\pi$. This implies that given $\pi(i),\pi(i')\in U$,  then $\pi(i),\pi(i')>\pi(n)$ and $\pi(i)>\pi(i')$ if $i>i'$. 
$L$ is the set of $\pi(i)\neq \pm n$ such that $(\pi(i),\pi(n))(-\pi(i),-\pi(n))\pi$ covers $\pi$. This gives a decreasing sequence of $\pi(i)$, notice that $\pi(i)<\pi(n)$. In particular $\pi(i)$ may be negative.
$C$ is the set containing $-\pi(n)$ in case $(\pi(n),-\pi(n))\pi$ covers $\pi$, otherwise it is the empty set. The sets $U$, $L$ and $C$ form a partition of the edges starting from $\pi(n)$.

We assume by contradiction that $d(\pi(n))>n+1$, and fix $p$ to be the smallest element such that $\pi(p)\in U\cup L \cup C$. Notice that if $C$ is non-empty $p=-n$, otherwise $\pi(p)\in U\cup L$ and we expect it to be negative. 
 We claim that $\pi(p)$  has degree smaller than $n+1$.
$\pi(p)$ is adjacent to at most one element in $U$ and at most one in $L$ therefore the degree of $\pi(p)$ in $\tilde{\Gamma}(n)$ is:
\[
d(\pi(\pi))\leq  2n-\underbrace{d(\pi(n))}_{|B(1,\pi(n))|}+\underbrace{2}_{|B({1,\pi(n)})\cap B(1,\pi(p))|}
\leq 2n-n-2 +3=n.
\] 
This proves the statement.
\end{proof}

If $n$ is odd we can say more.

\begin{figure}
  \centering
   \begin{tikzpicture}[scale=.8]
      \draw (4.75,-2.25) node[anchor=west]  {\footnotesize $E$};
      \draw (3.5,-0.85) node[anchor=west]  {\footnotesize $A$};
      \draw (1,0.3) node[anchor=west]  {\footnotesize $F$};
      \draw (4.5,4) node[anchor=west]  {\footnotesize $C$}; 
      \draw (-3.25,2.5) node[anchor=east]  {\footnotesize $B$}; 
      \draw (-3.5,.8) node[anchor=east]  {\footnotesize $D$}; 
      \draw (-5,-2) node[anchor=west]  {$-\pi(n)$};  %
      \draw (5,2) node[anchor=west]  {$\pi(n)$};  %
      \draw (-3.7,1.5) node[anchor=south]  {\footnotesize $\pi(p)$};
      \draw (3.3,-1.5) node[anchor=north]  {\footnotesize $-\pi(p)$};
\foreach \Point in {(5,2),
                       (-5,-2),
                              (3.5,-1.5), 
                              (-3.5,1.5), 
               (-2.7,1.1),(-2.3,0.9),(-1,0.6), 
               (0.4,0.4),(0.8,0.2),
               (2.7,-1.1),(2.3,-0.9),(1.3,-0.6),
               (4.1,-1.9),(4.4,-2.3),(4.7,-2.7),               
               (-.4,2.8),(-1.2,2.6),(-1.8,2.2),(-2.7,2.1),
               (0.6,3.4),(1.1,3.6),(2.3,4),(2.9,4.3),(4.1,4.5)
}{
     \node at \Point {\textbullet};
}
     \draw[thick] (-4.5,0) -- +(10,0); 
     \draw[thick] (0,-4) -- +(0,9); 
     \draw[dashed] (-4.5,1.5) -- +(10,0); 
     \draw[dashed] (-4.5,-1.5) -- +(10,0); 
     \draw[dotted] (-3.5,1.5) -- +(3.25,0);
     \draw[dotted] (-.25,.25) -- +(-3.25,0);
     \draw[dotted] (-.25,.25) -- +(0,1.25);
     \draw[dotted] (-3.5,1.5) -- +(0,-1.25);
     \draw[dotted] (-3.25,3) -- +(3,0);
     \draw[dotted] (-.25,2) -- +(-3,0);
     \draw[dotted] (-.25,2) -- +(0,1);
     \draw[dotted] (-3.25,3) -- +(0,-1);
     \draw[dotted] (3.5,-1.5) -- +(-2.25,0);
     \draw[dotted] (1.25,-.25) -- +(2.25,0);
     \draw[dotted] (1.25,-.25) -- +(0,-1.25);
     \draw[dotted] (3.5,-1.5) -- +(0,1.25);
     \draw[dotted] (3.75,-1.75) -- +(1,0);
     \draw[dotted] (4.75,-2.75) -- +(-1,0);
     \draw[dotted] (4.75,-2.75) -- +(0,1);
     \draw[dotted] (3.75,-1.75) -- +(0,-1);
     \draw[dotted] (0.1,.5) -- +(.9,0);
     \draw[dotted] (1,0.1) -- +(-.9,0);
     \draw[dotted] (1,0.1) -- +(0,.4);
     \draw[dotted] (0.1,.5) -- +(0,-.4);
     \draw[dotted] (0.2,3.2) -- +(4.3,0);
     \draw[dotted] (4.5,4.8) -- +(-4.3,0);
     \draw[dotted] (4.5,4.8) -- +(0,-1.6);
     \draw[dotted] (0.2,3.2) -- +(0,1.6);
   \end{tikzpicture}
\caption{Part 1}
\label{picturebn1}
\end{figure}

\begin{figure}
  \centering
   \begin{tikzpicture}[scale=.8]
      \draw (-3.8,1.9) node[anchor=east]  {$-A$};
      \draw (3.8,-1.9) node[anchor=west]  {$A$};
      \draw (-4.3,-2.5) node[anchor=east]  {$-B$};
      \draw (4.3,2.5) node[anchor=west]  {$B$}; 
      \draw (3.9,3.5) node[anchor=north]  {\footnotesize $\pi(b)$}; 
      \draw (-3.9,-3.5) node[anchor=south]  {\footnotesize $-\pi(b)$};
      \draw (-1.2,4.1) node[anchor=south]  {\footnotesize $\pi(p)$}; 
      \draw (1.2,-4.1) node[anchor=north]  {\footnotesize $-\pi(p)$};       
      \draw (-5,-.7) node[anchor=north]  {$-\pi(n)$};  %
      \draw (5,.7) node[anchor=south]  {$\pi(n)$};  %
\foreach \Point in {(5,.7),
                    (-5,-.7),
               (3.4,-2.3),(2.3,-1.9),(1.6,-1.6),
               (-3.4,2.3),(-2.3,1.9),(-1.6,1.6),
               (.4,-1.8),(1.2,-1.2),
               (-0.6,-2.4),(-1.1,-2.6),(-2.3,-3),             			   (-2.9,-3.3),(-4.1,-3.5),
               (-.4,1.8),(-1.2,1.2),
               (0.6,2.4),(1.1,2.6),(2.3,3),			               (2.9,3.3),(4.1,3.5)
}{
     \node at \Point {\textbullet};
}
     \draw[thick] (-4.5,0) -- +(10,0); 
     \draw[thick] (0,-4) -- +(0,9); 
     \draw[dashed] (-5,.7) -- +(10,0); 
     \draw[dashed] (-5,-.7) -- +(10,0); 
     \draw[dotted] (0.8,-1) -- +(3,0);
     \draw[dotted] (3.8,-2.8) -- +(-3,0);
     \draw[dotted] (3.8,-2.8) -- +(0,1.8);
     \draw[dotted] (.8,-1) -- +(0,-1.8);
     \draw[dotted] (-0.8,1) -- +(-3,0);
     \draw[dotted] (-3.8,2.8) -- +(3,0);
     \draw[dotted] (-3.8,2.8) -- +(0,-1.8);
     \draw[dotted] (-.8,1) -- +(0,1.8);
     \draw[dotted] (-1.4,1.1) -- +(5.7,0);
     \draw[dotted] (4.3,3.9) -- +(-5.7,0);
     \draw[dotted] (4.3,3.9) -- +(0,-2.8);
     \draw[dotted] (-1.4,1.1) -- +(0,2.8);
     \draw[dotted] (1.4,-1.1) -- +(-5.7,0);
     \draw[dotted] (-4.3,-3.9) -- +(5.7,0);
     \draw[dotted] (-4.3,-3.9) -- +(0,2.8);
     \draw[dotted] (1.4,-1.1) -- +(0,-2.8);
     \draw [draw,->] (-1.2,4.1) -- +(0,-2.6);
     \draw [draw,->] (1.2,-4.1) -- +(0,2.6);
   \end{tikzpicture}
\caption{Part 2}
\label{picturebn2}
\end{figure}

\begin{teo}
For any $\pi \in B_n$ with $n\geq 7$ and odd there is a vertex in $\Gamma(\pi)$ with degree at most $n$.
\end{teo}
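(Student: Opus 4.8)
The plan is to sharpen Lemma~\ref{lemman+1} by forcing every inequality in its proof to be an equality and reading off the rigid local picture that results. Suppose, for contradiction, that every vertex of $\Gamma(\pi)$ has degree at least $n+1$; by Remark~\ref{archigamma} the same holds for $\tilde{\Gamma}(\pi)$. Replacing $\pi$ by $w_0\pi=-\pi$ if necessary---which leaves $\Gamma(\pi)$ unchanged---we may assume $\pi(n)>0$, as in the proof of Lemma~\ref{lemman+1}. If $d(\pi(n))\geq n+2$ that proof already produces a vertex of degree at most $n$, contradicting our assumption; hence $d(\pi(n))=n+1$ and the estimate $d(\pi(p))\leq 2n-d(\pi(n))+2$ used there holds with equality. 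Unwinding the equality cases pins down the neighbourhood $B(1,\pi(n))=U\sqcup L\sqcup C$ of $\pi(n)$: the minimal-index neighbour $\pi(p)$ is adjacent to exactly one element of $U$ and one of $L$, the intersection $B(1,\pi(n))\cap B(1,\pi(p))$ has size $2$, and $B(1,\pi(n))\cup B(1,\pi(p))$ together with $\{\pm\pi(n),\pm\pi(p)\}$ exhausts $\{\pm1,\dots,\pm n\}$.

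I would then translate these combinatorial equalities into the centrally symmetric plot of the points $(k,\pi(k))$, $k\in\{\pm1,\dots,\pm n\}$. By Proposition~\ref{coveringB} each edge incident to $\pi(n)$ or to $\pi(p)$ is realised by a type-1, type-2 or type-3 reflection, each governed by an ``empty rectangle'' condition in the plot; combined with the equalities above this forces exactly the configuration of Figure~\ref{picturebn1}, in which every point of $\pi$ other than $\pm\pi(n),\pm\pi(p)$ lies in one of the six rectangular blocks $A,\dots,F$, and the assignment of the distinguished edges to $U$, $L$, $C$ prescribes block by block how many points each block may hold. Next I would introduce a third pivot, the unique $\pi(b)\in U$ adjacent to $\pi(p)$: imposing $d(\pi(b))=n+1$ and invoking central symmetry once more refines the picture to that of Figure~\ref{picturebn2}, in which the points outside $\{\pm\pi(n),\pm\pi(p),\pm\pi(b)\}$ are distributed among the mirror-pair blocks $A,-A$ and $B,-B$ subject to a small linear system coming from the three degree constraints.

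The contradiction is then a parity count: solving that system forces the size of one of the blocks to equal one half of an odd number, the odd parity entering precisely because $n$ is odd, which is impossible for a nonnegative integer. Running the residual cases---$C=\emptyset$ versus $C\neq\emptyset$, and the two blocks the $U$- and $L$-edges of $\pi(p)$ may occupy---the same half-integrality obstruction reappears. Hence the assumption is untenable and some vertex of $\Gamma(\pi)$ has degree at most $n$; this is the combinatorial input needed to push the estimate of Theorem~\ref{teobn} down to its exact value for odd $n$ via edge deletion, as in the type-$A$ argument of \cite{adinroich}. The hypothesis $n\geq 7$ guarantees that all the blocks of Figures~\ref{picturebn1}--\ref{picturebn2} are genuinely present and the point counts lie in the required range.

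I expect the main obstacle to be the passage from the combinatorial equalities of the first step to the two figures: each edge incident to $\pi(n)$, $\pi(p)$ or $\pi(b)$ can a priori be any of the three reflection types and can point in either direction of the Bruhat order, so locating it forces a separate empty-rectangle verification, and the central-symmetry bookkeeping must be kept consistent throughout. Once the figures are established, the closing parity computation is short.
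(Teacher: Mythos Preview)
What you have written is a plan rather than a proof, and the plan rests on a misreading of the two figures. In the paper, Figures~\ref{picturebn1} and~\ref{picturebn2} are \emph{not} successive refinements of a single configuration: they correspond to the two disjoint cases of the main dichotomy, namely whether $\pi(n)$ is adjacent to $-\pi(n)$ in $\tilde{\Gamma}(\pi)$ (equivalently, whether $C=\emptyset$ or $C\neq\emptyset$). Figure~\ref{picturebn2} is the picture after one has deduced $\pi(n)=1$ from the adjacency $\{\pi(n),-\pi(n)\}$, and its sets $A,B$ are a fresh partition of $B(1,\pi(n))$, not a refinement of Figure~\ref{picturebn1} obtained by introducing a third pivot $\pi(b)$. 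Your narrative ``first Figure~\ref{picturebn1}, then impose $d(\pi(b))=n+1$ to get Figure~\ref{picturebn2}'' therefore does not match the actual structure, and the residual case split you defer to the end is in fact the primary case split that must come first.

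The second gap is the step ``unwinding the equality cases pins down the neighbourhood \ldots and forces exactly the configuration of Figure~\ref{picturebn1}''. Equality in the single inequality of Lemma~\ref{lemman+1} only tells you that $|B(1,\pi(n))\cap B(1,\pi(p))|=2$ and that $B(1,\pi(n))\cup B(1,\pi(p))$ is as large as possible; it does not by itself locate the six blocks $A,\dots,F$ nor show that $B=F=E=\emptyset$ and $-D=A$. In the paper this is the content of nine separate claims (1.1)--(1.9), each of which uses a different empty-rectangle argument drawn from Proposition~\ref{coveringB} and often exploits a further auxiliary vertex (e.g.\ $\pi(p_-)$, $\pi(c_2)$, or elements of $-E$) together with the running hypothesis $d(\cdot)\geq n+1$. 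Only after all of this does the parity identity $n+1=2|D|+|E|+|C|$ emerge. Your proposal compresses this entire chain into a single sentence, and the ``third pivot $\pi(b)$'' you introduce does not appear in Part~1 at all. The overall strategy---assume all degrees $\geq n+1$, analyse $B(1,\pi(n))$ via $U,L,C$, and finish with a parity obstruction---is indeed the paper's, but the substance of the argument lies precisely in the structural claims you have not supplied.
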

\begin{proof}
As for the previous lemma, we prove the statement for $\tilde{\Gamma}(\pi)$.
We consider the vertex $\pi(n)\in \tilde{\Gamma}(\pi)$ that without loss of generality can be assumed positive; we suppose that $d(\pi(i))>n$ for any $i\in \{\pm1,\ldots,\pm n\}$.
First we notice a general fact that is useful for the rest of the proof. Assume that $\pi(i)$ and $\pi(n)$ are two vertices in $\tilde{\Gamma}(\pi)$ both with degree bigger than $n$, then:
\begin{align*}
|\{\text{elements not adjacent to }\pi(i) \text{or} \pi(n)\}|&= 2n-|B(1,\pi(i))|-|B(1,\pi(n))|+\\
&+|B(1,\pi(n))\cap B(1,\pi(i))|\\
&\leq 2n-2(n+1)+\\&+|B(1,\pi(n))\cap B(1,\pi(i))|\\
&=|B(1,\pi(n))\cap B(1,\pi(i))|-2.
\end{align*}
This means that the set $B(1,\pi(n))\cap B(1,\pi(i))$ has cardinality at least $2$.

The rest of the proof is divided in two parts: in the first one (see Figure \ref{picturebn1}) we assume that $\pi(n)$ is not adjacent to $-\pi(n)$ in $\tilde{\Gamma}(\pi)$ while in the second part (see Figure \ref{picturebn2}) we assume that it is.

\textbf{Part 1}.
We start by studying the case when $((\pi(n),\pi(-n))\pi,\pi)$ is not an edge in $H(B_n)$,
 this implies that in $\tilde{\Gamma}(\pi)$, $\pi(n)$ is not adjacent to $-\pi(n)$ (see Figure \ref{picturebn1}). 
From this we have that if $\pi(i)\in B(1,\pi(n))$ is and edge in $H(B_n)$, then $i<0$ implies $\pi(i)>0$.
We consider the elements adjacent to $\pi(n)$ in $\Gamma(\pi)$,
these may be divided in the two sets $U$ and $L$ as described in the previous Lemma \ref{lemman+1}. 
Let now $p\in [-n,-n+1,\ldots,n]$ be the smallest element such that $\pi(p)\in B(1,\pi(n))$,
 $\pi(p)$ may be an element in $L$ or in $U$. Note that $\pi^{-1}(B(1,\pi(n)))\subset\{\pm 1,\ldots,\pm n-1\}$ this means that there are some $\pi(k)\in B(1,\pi(n))$ such that $k<0$, in particular $p<0$.

We divide the elements in $B(1,\pi(n))$ in six subsets (see Figure \ref{picturebn1} ).
We write $L\cup \{\pi(p)\}$ as a union of four sets:
\begin{itemize}
\item We denote by $F$ the elements $\pi(i)$ such  that $i\geq0$, $0<\pi(i)<\pi(p)$;
\item We denote by $A$ the set of elements $\pi(i)$ adjacent to $\pi(n)$ such that $i>0$ and $0<\pi(i)\leq -\pi(p)$;
\item We denote by $D$ the set of elements $\pi(i)$ with $\pi(p)\geq \pi(i)\geq 0$ and $i<0$, note that $D\neq \emptyset$ because $\pi(p)\in D$;
\item  We denote by $E$ the elements $\pi(i)$ such that $i>0$ and $\pi(i)<\pi(-p)$
\end{itemize}
and we see $N\setminus\{\pi(p)\}$ as union of two:
\begin{itemize}

\item We denote by $B$ the elements $\pi(i)$ such that $i<0$ and $\pi(i)>\pi(p)$;
\item We denote by $C$ the elements $\pi(i)$ such that $i>0$ and $\pi(i)>\pi(p)$.
\end{itemize}
Given a set $X\subset \{\pm 1,\ldots,\pm n\}$, we define $-X:=\{-j: j\in X\}$ and $\pos(X):=\{|j|:j\in X\}$.

From the description we gave of the sets $L$ and $U$ we know that $F\cup A\cup D \cup E$ is a decreasing set while $\{\pi(p)\}\cup B\cup C$ is an increasing set.

We have that $|B(1,\pi(n))|>n$ and that the sets $\pi^{-1}(A)$ $,\pi^{-1}(E)$ $,\pi^{-1}(F)$ $,\pi^{-1}(C)$ are pairwise disjoint, their union has cardinality at most $n-1$ because $\pi^{-1}(A)\cup\pi^{-1}(E)\cup\pi^{-1}(F)\cup\pi^{-1}(C)\subset \{1,\ldots,n-1\}$. This implies that $|D\cup B|\geq 2$. 

We claim that:
\begin{itemize}
    \item[1.1)] $A\neq \emptyset$;
    \item[1.2)] $B\cup C\neq \emptyset$;
    \item[1.3)] $B= \emptyset$;
    \item[1.4)]$\pi(p_-)\in D$;
    \item[1.5)] $|E|\leq 1$;
    \item[1.6)] $-D=A$;
    \item[1.7)] $F=\emptyset$;
    \item[1.8)] $|C|\leq 2$;
    \item[1.9)] $E=\emptyset$.
\end{itemize}

\underline{Proof of 1.1).} Let us now consider any $j\in-\pi^{-1}(B)$, this is such that $\pi(j)<-\pi(n)<0$, in particular $j\notin \pi^{-1}(F\cup C\cup A)$. Also, we have that the elements in $B$ are in decreasing order, so the elements in $-B$ must be in decreasing order too; this implies that $|-\pi^{-1}(B)\cap \pi^{-1}(E)|\leq 1$. We know that the elements in $A$ are positive and smaller than $\pi(n)$, this implies that $-A\cap B=\emptyset$ while $-A$ and $D$ may have elements in common. We want to count the number of elements in $\pos(\pi^{-1}(L))\cup \pos(\pi^{-1}(U))$,
we know that this is: 
\begin{align*}
|\pos(\pi^{-1}(L))\cup &\pos(\pi^{-1}(U))|=\\
&=\underbrace{|A|+|B|+|D|+|C|+|F|+|E|}_{\geq n+1}-\underbrace{1}_{-B\cap E}-|D\cap -(A)|\\
&\leq n-1
\end{align*}
But $
|B(1,\pi(n))|>n
$
it follows that $|D\cap -(A)|\geq 1$ in particular $A\neq \emptyset$. 

Before proving claims 1.2-1.9, we make some remarks about the elements $\pi(p)$ may or may not be adjacent to in $\tilde{\Gamma}(\pi)$:
\begin{itemize}
\item  $\pi(p)$ is adjacent to exactly one element in $D\cup F\cup A\setminus \pi(p)$ (recall that this set is non-empty because  $A\neq \emptyset$);
\item by definition $\pi(p)$ is adjacent to $\pi(n)$;
\item If $B\cup C$ is non-empty then $\pi(p)$ is adjacent to exactly one of its elements;
\item  It is not adjacent to any element in $-B$. 

\end{itemize}

\underline{Proof of 1.2).}
We have seen that $B(1,\pi(i))\cap B(1,\pi(n))$ must have cardinality least two. One of these must be in $B\cup C$ which is therefore non-empty. Notice also that this implies that $B(1,\pi(p))\cup B(1,\pi(n))=\{\pm 1,\ldots,\pm n\}$.

\underline{Proof of 1.3)}. Suppose that $\pi(b)\in B$, we have that $-\pi(b)\notin B(1,\pi(p))$, this implies that $\pi(b)$ must be adjacent to $\pi(n)$. We see that $-\pi(b)<-\pi(p)$ so $-\pi(b)\in E$ but $-b<-p$ which is in contradiction with the monotonicity of $L$, we conclude that $B$ must be empty.

\underline{Proof of 1.4).}.We just found that $B\neq \emptyset$. Because $B\cup C\neq \emptyset$ we conclude that which implies $|C|\geq 1$.
We also know that $|B\cup D|\geq 2$, it follows that $|D|\geq 2$, in particular $\pi(p_-)\in D$.

\underline{Proof of 1.5)}. 
We know that the elements in $E$ are in descending order, as a consequence the elements in $-E$ must be in ascending order. We expect the elements in $-E$ to be on the right of $\pi(p)$, this implies that they don't belogn to $B(1,\pi(n))$. If $E$ had more than one element, than there would be a $\pi(e)\in E$ such that $-\pi(e)$ is not adjacent to $\pi(p)$, that would give one element not adjacent to $\pi(p)$ nor to $\pi(n)$, from now on we assume $|E|\leq 1$.

\underline{Proof of 1.6)}. Now we study the elements in $-D$, none of these is adjacent to $\pi(p)$ so they must be all adjacent to $\pi(n)$. Given $\pi(d)\in D$, we have that $-\pi(p)\leq -\pi(d)<0$, we conclude that $-D\subset A$. 
Let $\pi(a)\in A\setminus -D$, we have that $0>\pi(a)>-\pi(p)$ and  $-\pi(p)$ is the rightmost element in $A$. The elements in $A$ are in ascending order, therefore there is exactly one element in $A$ adjacent to $-\pi(p)$ in $\tilde{\Gamma(\pi)}$, this must be $-\pi(p_-)$. Therefore $\pi(a)$ is not adjacent to $-\pi(p)$ and conversely $-\pi(a)$ is not adjacent to $\pi(p)$, also $\pi(a)\notin D$ and so $\pi(a)\notin B(1,\pi(n))$. We found an element not in $B(1,\pi(n))\cap B(1,\pi(p))$, this leads to a contradiction.

\underline{Proof of 1.7)}. We take now $\pi(f)\in F$, we have that for any $\pi(a)\in A$, $f<a$, therefore for any $-a\in -A$ we have $-f>-a$ and $\pi(f)>0>\pi(a)$. We know that $\pi(p)$ is adjacent to one element in $D$, this implies that it is not adjacent to any element in $-F$, moreover $-F\not\subset B(1,\pi(n))$ we conclude that $F=\emptyset$.

\underline{Proof of 1.8)}. We recall that $|D|\geq 2$ and that $\pi(p)$ must be adjacent to all the elements in $-C$, equivalently $-\pi(p)$ is adjacent to the elements in $C$. We denote $C=\{\pi(c_1),\ldots,\pi(c_k)\} $ where
 $0<c_1<\ldots<c_k$ and $0<\pi(c_1)<\ldots<\pi(c_k)$. We deduce that, except from $c_k$ that may be bigger than $-p$, all the $c_j$s  are in $[-p_-,-p]$. Consider $\pi(p_-)$ in $\tilde{\Gamma}(\pi)$, this is adjacent to at most three elements among the ones adjacent to $\pi(n)$: two of these are in $A\cup (-A)$ and one is in $C$.
This means that there is at most one element in $\{B(1,\pi(p_-))\cup B(1,\pi(n))\}^c$. 
According to what we said about $-C$ we have that $\pi(p')$ is adjacent to at most one element in it, we deduce that $|-C|\leq 2$.

\underline{Proof of $1.9)$}. We notice that $n+1=2|D|+|E|+|C|$, because we assumed that $n$ is odd
 we must have $|E|+|C|\equiv 0$ $mod(2)$ . Only two situations may occur: $|E|=|C|=1$ or $E=\emptyset$ and $|C|=2$.
Assume that $|E|=|C|=1$ and let $\{\pi(e)\}= E$.
The elements that are adjacent to both $\pi(e)$ and $\pi(n)$ in $\tilde{\Gamma}(\pi)$ are two: one in $A$ and the other in $C$. We deduce that there can't be vertices in $\tilde{\Gamma}(\pi(n))$ that are not adjacent to both $\pi(e)$ and $\pi(n)$.
We notice that $-\pi(e)<\pi(p)<\pi(e)$ and $-e<p<e$ so $\pi(e)$ and $-\pi(e)$ are not adjacent and $-\pi(e)$ is not adjacent to $\pi(n)$ as well because $B=\emptyset$. We found an element in $B(1,\pi(e))\cap B(1,\pi(n))$, this leads to a contradiction. This implies that $|E|=0$ and $|C|=2$.

\vspace{3pt}

The only case left is when $E=\emptyset$ and $|C|=2$. Let $\pi(c_1)<\pi(c_2)$ the elements in $C$, we notice that $\pi(c_2)$ in $\tilde{\Gamma}(\pi)$ is adjacent to at most one element in $A$ and to $\pi(c_1)$, we deduce that any element is adjacent to either $\pi(n)$ either to $\pi(c_2)$. We notice that the elements in $-C$
 are not adjacent to $\pi(n)$ nor to $\pi(c_1)$, we conclude that $\pi(p)\notin D$.
 
 What we conclude from this first part of the proof is that if $\pi(n)$ and $-\pi(n)$ are not adjacent in $\tilde{\Gamma}(n)$, then the statement holds.

\textbf{Part 2}. From now on we assume that $\pi(n)$ and $-\pi(n)$ are adjacent in $\tilde{\Gamma}(\pi)$. This means that there are no elements $\pi(i)$ such that $-\pi(n)<\pi(i)<\pi(n)$, as a consequence we have $\pi(n)=1$.

We consider the set of elements that are adjacent to $1$ in $\tilde{\Gamma}(\pi)$ and divide them in three sets (see Picture \ref{picturebn2}):
\begin{itemize}
\item $A$ is the set of negative vertices $\pi(a)\neq -1$ adjacent to $1$. The elements in this set are in descending order;
\item $B$ is the set of positive elements $\pi(b)$ adjacent to $1$. The elements in this set are in ascending order;
\item $C=\{\pi(-n)=-1\}$.
\end{itemize}
We are assuming that in $\tilde{\Gamma}(\pi)$ at least $n+1$ edges start from $1$, this implies that $|A|+|B|\geq n$. Because $A\subset\{-2,\ldots,-n\}$ and $B\subset \{2,\ldots,n\}$ we conclude that both $A$ are non-empty. We claim the following:
\begin{itemize}
    \item[2.1)] $|A\cap -B|=1$;
    \item[2.2)] $(-A)\cap B=\{2\}$;
\end{itemize}

\underline{Proof of 2.1)}. We know that $A$ and $B$ are disjoint sets, yet $A$ and $-B$ may intersect. We show that such an intersection must be non-empty:
\[
\underbrace{|A|+|B|}_{\geq n}+\underbrace{|-A|+|-B|}_{\geq n}-|A\cap -B|-|-A\cap B|\leq 2(n-1).\quad \Rightarrow  A\cap B\neq \emptyset.
\]
We already noticed that the elements in $A$ are in descending order and the elements in $B$ are in ascending order this implies that $|A\cap -B|\leq 1$. We conclude that $|A\cap B|=1$ and $|A|+|B|-1=n-1$.

\underline{Proof of 2.2)}. Let $\pi(p)=-A\cap B$, we notice that it is adjacent to both $\pi(n)=1$ and $-\pi(n)=-1$. This means that there are no $\pi(k)$ with $|k|<n$ such that $-1<\pi(k)<\pi(p)$, implying $\pi(p)=2$.

We consider the rightmost element in $A\cup B$, this can be either $\pi(p)=-A\cap B$ either $-\pi(p)=A\cap -B$, we assume that in our case the leftmost element is $\pi(p)\in B$.

To end the proof we consider $\pi(b)$ as the rightmost element in $B$ and we make some remarks about the elements adjacent and not adjacent to $\pi(p)$ and $\pi(b)$:
\begin{itemize}
\item $\pi(p)$ is adjacent to at most three elements in $B(1,\pi(n))$ and to at most one element in $-A$, so is not adjacent to at least $n-2+|A|-2$ elements ;
\item $\pi(b)$ is adjacent to at most one element in $B$ and potentially all the elements in $A$;
\item if $B_-$ is the set of $\pi(k)\in B$ such that $k<0$, then $\pi(p)$ is not adjacent to $-B_-$;
\item if $B_+$ is the set of elements $\pi(k)\in B$ with $k>0$ and $|B_+|\geq 2$ then $\pi(b)$ is not adjacent to the 
elements in $-B_+$.
\end{itemize}
From these remarks we deduce that if $|A|\geq 4$ then $|B(1,\pi(p))|\leq n$. 
Recall that we are in case $n\geq 7$, so $|A|+|B|\geq 7$. If $|A|=1,2$ we have that $|B|\geq 5$ and this implies that either $|B_+|\geq 2$ or $|B_-|\geq 4$. In the first case we have that $|B(1,\pi(b))|\leq n$ in the second case $|B(1,\pi(p))|\leq n$. Only the case $|A|=3$ is left, we see that if in this case $|B_+|\geq 3$ then $B(1,\pi(b))$ has less than $n+1$ elements. Otherwise $|B_+|<3$ so $\pi(p)$ is not adjacent to any element in $-A$, any element in $-B_{-}$ and $n-2$ elements in $B(1,\pi(n))$. This concludes the study of this second part of the proof. Notice that if we had had $-\pi(p)$ as the leftmost element in $-B$ we could have followed the same reasoning using $\pi(b)$ as the rightmost element in $A$ and changing the roles of $A$ and $B$.
\end{proof}

Now we are ready to prove the main result of this section:

\begin{proof}[Proof of Theorem \ref{teobn}]
The total degree of an element $\pi\in B_n$ is the number of edges (including loops) that appear in $\Gamma(\pi)$.

We start studying the case $n=5$, this is the base step of an induction. The fact that the maximal degree in $H(B_5)$ is $16$ can be easily checked using the functions bruhat\textunderscore upper\textunderscore
covers and bruhat\textunderscore lower \textunderscore covers on SageMath (see \label{sage}). As an example of element that actually reaches this maximum degree we can take $[1,2,-5,-4,-3]$.

 Let now $\pi\in B_{n}$, $n>5$ and assume that the Theorem is true for any element in $B_{n-1}$.
 We consider a vertex $a$ in $\mathcal{V}(\Gamma(\pi))$ with no more than $n+1$ edges if $n$ is even and no more than $n$ edges if $n$ is odd. We define $\hat{\Gamma}(\pi)$ as the graph obtained erasing from $\Gamma(\pi)$ the node $a$ and its edges. We denote by $\pi'$ the element of $B_{n-1}$ obtained deleting $a$ from $\pi$ and subtracting $1$ to the values larger than $a$. 
We compare now the two graphs $\hat{\Gamma}(\pi)$ and $\Gamma(\pi')$ and notice that the edges of the second one include the ones of the first. Therefore we have that
\[
|\{\text{ edges in }(\Gamma(\pi'))\}|\geq |\{\text{ edges is }({\Gamma(\hat{\pi})})\}|.
\]
We use the induction hypothesis and split it into two cases:
\begin{itemize}
\item $n$ even
\[
e(\Gamma)(\pi)=e(\hat{\Gamma}(\pi))+d(a)\leq \lfloor \frac{(n-1)^2}{2}\rfloor+(n-1)-1+n+1=\lfloor \frac{n^2}{2}\rfloor+n-1;
\]
\item $n$ odd
\[
e(\Gamma)(\pi)=e(\hat{\Gamma}(\pi))+d(a)\leq \lfloor \frac{(n-1)^2}{2}\rfloor+(n-1)-1+n=\lfloor \frac{n^2}{2}\rfloor+n-1;
\]
\end{itemize}
To show the equality we exhibit ad element in $B_{n}$, $n\geq 5$ that satisfies the equality:
\[
[1,2,\ldots,m,-n,-(n-1),\ldots,-(m+1)].
\]
\end{proof}

Now we have to study the cases $B_2$, $B_3$ and $B_{4}$. The following Propositions hold:
\begin{prop}
The maximal degree of a vertex in the Hasse diagram of the Bruhat order of $B_n$ is $4(n-1)$ for $n\leq 5$.
\end{prop}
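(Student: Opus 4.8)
The plan is to split the claim according to $n$. For $n=5$ one has $4(n-1)=16=\lfloor n^2/2\rfloor+n-1$, and this is already the base case settled inside the proof of Theorem~\ref{teobn}, where $[1,2,-5,-4,-3]\in B_5$ is exhibited with degree $16$ and the quoted SageMath check rules out anything larger. So the work left is $B_2$, $B_3$, $B_4$; and I would first note that for these values $4(n-1)$ is \emph{strictly} bigger than $\lfloor n^2/2\rfloor+n-1$ (that is, $4>3$, $8>6$, $12>11$), which is exactly why the quadratic estimate governing Theorem~\ref{teobn} is not yet sharp here and each small group needs its own treatment.

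\emph{Lower bound.} By Remark~\ref{archigamma} the degree of $\pi$ in $H(B_n)$ is the number of edges (loops included) of $\Gamma(\pi)$, so for each $n\le 4$ I would simply exhibit one element realising $4(n-1)$. Using Proposition~\ref{coveringB} this is easy: for example the length-$2$ element $[-2,1]\in B_2$ has degree $4$, and one finds analogous explicit signed permutations in $B_3$ and $B_4$ (or reads suitable elements off the enumeration below, as $[1,2,-5,-4,-3]$ was found for $B_5$).

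\emph{Upper bound.} The quickest route is exhaustive verification: since $|B_2|=8$, $|B_3|=48$ and $|B_4|=384$, I would list all elements and compute their degree in $H(B_n)$, either directly from Proposition~\ref{coveringB} or --- as already done for $B_5$ and for the exceptional groups in Proposition~\ref{H(W)} --- with \texttt{bruhat\_upper\_covers} and \texttt{bruhat\_lower\_covers} in SageMath~\cite{sage}; the maximum that comes out is $4(n-1)$ in each case. A more conceptual alternative disposes of $B_2$ (check the eight elements by hand) and $B_3$: Lemma~\ref{lemman+1} gives a vertex $a$ of $\Gamma(\pi)$ with at most $n+1=4$ incident edges, and deleting it exactly as in the proof of Theorem~\ref{teobn} yields $\pi'\in B_2$ with $e(\Gamma(\pi))\le e(\Gamma(\pi'))+4\le 4+4=8$, matching the lower bound.

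\emph{Main obstacle.} The real difficulty will be $B_4$. The naive bootstrap from $B_3$ via Lemma~\ref{lemman+1} gives only $e(\Gamma(\pi))\le d_{\max}(H(B_3))+(n+1)=8+5=13$, one too many, and the refinement that guarantees a vertex of degree at most $n$ in $\Gamma(\pi)$ has so far been proved only for odd $n\ge 7$, so it says nothing about $n=4$. Hence $B_4$ must be closed either by the exhaustive computation above, or by a dedicated case analysis mirroring the one for odd $n$: one would partition $B(1,\pi(n))$ into the same six subsets, now with $n=4$, and show that in every configuration either $\pi(n)$ or a suitably chosen vertex of small index has at most $4$ edges in $\tilde{\Gamma}(\pi)$. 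That produces a vertex of degree $\le 4$ in $\Gamma(\pi)$, and then the induction closes with $d_{\max}(H(B_4))\le d_{\max}(H(B_3))+4=12$.
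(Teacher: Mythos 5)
Your main route --- a hand check for $B_2$ together with exhaustive/SageMath verification of the degrees for $B_3$ and $B_4$ (with $n=5$ already settled in the base step of Theorem~\ref{teobn}) --- is exactly how the paper proves this proposition, so the proposal is correct and essentially identical in approach. The additional remarks (the bootstrap via Lemma~\ref{lemman+1} for $B_3$ and the observation that it fails by one for $B_4$) are sound but not needed once the computation is invoked.
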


\begin{proof}
It is easy to verify that the Proposition holds for $B_2$.
For cases $B_3$ and $B_4$ it is possible to compute de maximum degree of the Bruhat order using the functions bruhat \textunderscore upper \textunderscore covers and bruhat \textunderscore lower \textunderscore covers on SageMath (see \cite{sage}).

\end{proof}

\subsection{On the degree of the Hasse diagram of Dn}
\label{dmaxdn}
In this section we study the elements of maximum degree in the Hasse graph of the Bruhat order of $D_n$. Our proof follows the lines of the one in the previous section.

We start therefore with a description of the covering relations.

\begin{prop}\label{propdn}
Given two elements $\pi,\sigma \in D_n$ with $\pi>\sigma$ there is an edge in $H(D_n)$ between $\pi$ and $\sigma$ if $\pi=s\sigma $ with $s$ of the following kind:
\begin{itemize}
\item[1)] $s=(\pi(i),\pi(j))(-\pi(i),-\pi(j))$ with $0<i<j$, $\pi(i)<\pi(j)$ such that for any $i<k<j$ $	\pi(k)\notin [\pi(i),\pi(j)]$; 
\item[2)] $s=(\pi(i),-\pi(j))(\pi(j),-\pi(i))$ with $i,j>0$, $sign(\pi(i))=sign(-\pi(j))$, $-\pi(j)<\pi(i)$ and such that  given $k\in [-j,i]$, $\pi(k)>\pi(i)$ or $\pi(k)<-\pi(j)$.
\item[3)] $s=(\pi(i),-\pi(j))(-\pi(i),\pi(j))$ with $j>i>0$, $\pi(i)<\pi(j)<0$ and  given $k\in [-i,j]$, $\pi(k)\notin [\pi(i),-\pi(j)]$.
\item[4)] $s=(\pi(i),-\pi(j))(-\pi(i),\pi(j))$ with $j>i>0$, $\pi(j)>\pi(i)>0$ and  given $k\in [-i,j]\setminus\{i\}$, $\pi(k)\notin [-\pi(i),\pi(j)]$.
\end{itemize}
\end{prop}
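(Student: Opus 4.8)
The plan is to follow the proof of Proposition \ref{coveringB}, with two caveats: $D_n$ contains no sign-change reflection — its reflections are only the ``$\pm$-transpositions'' $(a,b)(-a,-b)$ and $(a,-b)(-a,b)$ — and the Bruhat order on $D_n$, while contained in the Bruhat orders of $B_n$ and of $S_{2n}$, is \emph{not} simply the induced order; I would use the combinatorial description of the Bruhat order on $D_n$ from \cite[Ch.~8]{BB}. Since $\pi=s\sigma$ for a reflection $s$, the elements $\pi$ and $\sigma$ are automatically comparable in $D_n$, with $\pi>\sigma$ iff $\ell(\pi)>\ell(\sigma)$ in $D_n$, and then $\pi$ covers $\sigma$ iff the jump $\ell(\pi)-\ell(\sigma)$ equals $1$, by gradedness of the Bruhat order. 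So for each of the four families the point is to show that the displayed ``no entry in the interval'' hypothesis is precisely what forces this jump to be $1$.

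Cases 1 and 2 reduce to cases 1 and 3 of the proof of Proposition \ref{coveringB}. There $s$ is a $t^+$-reflection, respectively a $t^-$-reflection whose two entries have opposite signs; under the betweenness hypothesis the length jump in $S_{2n}$ is exactly $2$, so by \cite[Lemma 2.7.3]{BB} the interval $[\sigma,\pi]$ in $S_{2n}$ is a single diamond whose two middle vertices are obtained from $\sigma$ by a single transposition — hence are not invariant under $x\mapsto -x$, so they lie outside $B_n$, a fortiori outside $D_n$. Thus no element of $D_n$ lies strictly between $\sigma$ and $\pi$ in the $S_{2n}$ order, so none lies strictly between them in the $D_n$ order either; since moreover $\sigma<\pi$ in $S_{2n}$ forces $\sigma<\pi$ in $D_n$ (the two being $D_n$-comparable, and every $D_n$-relation being an $S_{2n}$-relation), $\pi$ covers $\sigma$ in $D_n$.

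Cases 3 and 4 are where $D_n$ departs from $B_n$: $s$ is now a $t^-$-reflection whose two entries have the same sign — both negative in case 3, both positive in case 4. In type $B$ such an $s$ never produced a cover, because the sign change $(\pi(i),-\pi(i))$ sits strictly between $\pi$ and $s\pi$; but $(\pi(i),-\pi(i))\pi$ changes the parity of the number of negative entries, so it is not in $D_n$, and $D_n$ has no sign-change reflection to provide an intermediate element. Here the $S_{2n}$-interval $[\sigma,\pi]$ is no longer a diamond — it does contain further elements, possibly even elements of $D_n$, which is harmless since those need not be $D_n$-comparable to $\sigma$ and $\pi$ — so the easy argument of cases 1--2 is unavailable and one must compute the $D_n$-length jump directly from the one-line formula for $\ell$ on $D_n$. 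One finds that the contribution of the four moved positions is $1$, and the betweenness condition is exactly what kills every other contribution — the extra inversions and ``negative-sum'' pairs created by entries $\pi(k)$ with $k$ in the range attached to the reflection. The asymmetry between ``$k\in[-i,j]\setminus\{i\}$'' in case 4 and ``$k\in[-i,j]$'' in case 3 comes out of this count (the entry at position $i$ would otherwise always violate the condition in case 4), and the same count run backwards shows that if the condition fails the jump is at least $3$, so that the conditions are necessary as well as sufficient for an edge.

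The step I expect to be the main obstacle is the length computation in cases 3 and 4: unlike in type $B$, and unlike cases 1--2 here, the order on $D_n$ near $\sigma$ is not captured by a small $S_{2n}$-interval, so one genuinely has to work with the $D_n$ length function and carry out a somewhat delicate inversion-and-sign count, keeping careful track of which positions lie in the interval governing each reflection.
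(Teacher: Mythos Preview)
The paper does not actually prove this proposition; it only follows the statement with a Remark asserting that the Bruhat order on $D_n$ is the order induced from $B_n$, hence from $S_{2n}$. That fact is the paper's entire argument: with it in hand, all four cases can in principle be treated as in the proof of Proposition~\ref{coveringB}, by checking that no element of $D_n$ lies strictly inside the relevant $S_{2n}$-interval $[\sigma,\pi]$.

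Your proposal contradicts the paper on exactly this point. You assert that the $D_n$ Bruhat order is \emph{not} the induced order from $B_n$ or $S_{2n}$; but it is --- this is the content of the paper's Remark, and it is in \cite[Ch.~8]{BB} (the tableau criterion for $D_n$ gives $u\le v$ in $D_n$ iff $u\le v$ in $S_{2n}$, the exact $D$-analogue of Proposition~\ref{s2n}). Your parenthetical in cases 3--4, ``possibly even elements of $D_n$, which is harmless since those need not be $D_n$-comparable to $\sigma$ and $\pi$'', is therefore wrong: any $D_n$ element in the $S_{2n}$-interval would automatically lie strictly between $\sigma$ and $\pi$ in $D_n$ as well, and would obstruct the cover.

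That said, your fallback plan --- compute the $D_n$-length jump directly from the one-line formula for $\ell$ and show that the betweenness hypothesis forces it to equal~$1$ --- is correct and would prove the proposition. So the proposal is not broken, only based on a misdiagnosis: the diamond argument of cases 1--2 is unavailable in cases 3--4 not because induced comparability fails, but simply because the $S_{2n}$-interval there is longer than a diamond (the sign change $(\pi(i),-\pi(i))\sigma$ already sits inside it, for instance, as does $(\pi(j),-\pi(j))\sigma$). One can either examine that larger interval and rule out $D_n$ elements from it, or, as you propose, bypass the interval entirely and count $D_n$-inversions; both routes work and involve comparable bookkeeping.
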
 

\begin{obs}
As $D_n$ is a subgroup of $B_n$ it is in particular a subset of it. The Bruhat order on $D_n$ is the one induced by the Bruhat order of $B_n$. In particular it is an induced subposet of $A_{2n-1}=S_{[-n,\ldots,n]}$ with its Bruhat order.
\end{obs}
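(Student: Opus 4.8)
The statement packages three facts, and the plan is to dispatch them in increasing order of difficulty. The first assertion, that $D_n$ is a subgroup and hence a subset of $B_n$, is immediate from the description recalled earlier: $D_n$ is by definition the even hyperoctahedral group, i.e.\ the set of signed permutations of $\{\pm 1,\ldots,\pm n\}$ with an even number of negative images, and this set is closed under products and inverses. Composing with the inclusion $B_n \subseteq S_{[-n,\ldots,n]}=S_{2n}$ already used in Proposition \ref{s2n}, we obtain set-theoretic inclusions $D_n \subseteq B_n \subseteq S_{2n}$; the only real content is that these inclusions are compatible with the respective Bruhat orders.

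The crux is the middle assertion, that for $u,v\in D_n$ one has $u\leq v$ in $D_n$ if and only if $u\leq v$ in $B_n$. The plan is first to realize $D_n$ as a reflection subgroup of $B_n$ whose reflection set is exactly $T\cap D_n$. The reflections of $B_n$ are the sign changes $(i,-i)$ together with the paired transpositions $(i,j)(-i,-j)$ and $(i,-j)(-i,j)$; the first family changes exactly one sign and so lies outside $D_n$, while the latter two families change zero or two signs and so lie in $D_n$. These latter two families are precisely the reflections occurring in Proposition \ref{propdn}, so the reflection set of $D_n$ equals $T\cap D_n$. One direction of the order equivalence is then easy: since $D_n$ and $B_n$ act on the same $\mathbb{R}^n$ and a type-$D$ positive root is also a type-$B$ positive root, for $w\in D_n$ and a reflection $t\in T\cap D_n$ the two length functions agree on which of $w$ and $tw$ is larger; hence any Bruhat-increasing chain inside $D_n$ is also Bruhat-increasing inside $B_n$, which yields $u\leq_{D_n} v \Rightarrow u\leq_{B_n} v$.

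The hard direction is the converse: a chain witnessing $u\leq_{B_n} v$ may use sign-change reflections outside $T\cap D_n$ and pass through elements of $B_n\setminus D_n$, so it need not descend verbatim to a chain in $D_n$. Here I would invoke the theory of reflection subgroups (Dyer), by which a reflection subgroup equipped with its canonical set of simple reflections carries a Bruhat order that is exactly the restriction of the ambient one; applied with $W'=D_n$ and $T'=T\cap D_n$ this gives $u\leq_{B_n} v \Rightarrow u\leq_{D_n} v$. Alternatively, and in the spirit of Proposition \ref{s2n}, one can cite the combinatorial Bruhat criterion for type $D$ from \cite[Chapter 8]{BB} and check directly that, for two \emph{even} signed permutations, it coincides with the $S_{2n}$ comparison; the recurring technical subtlety in type $D$ is a parity correction, and the point to verify is that this correction is an artifact of the intrinsic $D_n$ formulation and imposes no extra constraint once the elements are compared inside $S_{2n}$. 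I expect this converse to be the main obstacle, precisely because the $D_n$ and $B_n$ length functions differ and the na\"ive chain argument fails.

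Finally, the third assertion follows by transitivity of the ``induced subposet'' relation. Proposition \ref{s2n} states that $B_n$ is an induced subposet of $S_{2n}=A_{2n-1}$, and the middle assertion states that $D_n$ is an induced subposet of $B_n$; composing these, $D_n$ is an induced subposet of $A_{2n-1}$, which is exactly what is claimed. This also justifies reading off the covering relations of Proposition \ref{propdn} through Proposition \ref{coveringA}, mirroring the role played by Proposition \ref{s2n} in the type-$B$ analysis.
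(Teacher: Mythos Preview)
The paper offers no proof of this remark; it is asserted as a known fact, parallel to Proposition~\ref{s2n} and implicitly resting on the same source \cite[Chapter~8]{BB}. Your proposal is correct and supplies far more justification than the paper does. Of your two routes for the nontrivial direction, the second---comparing the explicit tableau criteria for Bruhat order in $D_n$ and in $S_{2n}$ and checking they agree on even signed permutations---is the one closest in spirit to how the paper treats the $B_n$ case via Proposition~\ref{s2n}; the relevant statements are \cite[Theorem~8.2.8, Corollary~8.1.9]{BB}, and the parity correction you flag does indeed wash out for pairs already lying in $D_n$. The Dyer reflection-subgroup argument also works, with the small caveat that one should verify the canonical simple system Dyer assigns to $D_n\subset B_n$ is the standard one (it is). Your transitivity step for the final assertion is exactly right.
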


We would like to associate to every element $\pi\in D_n$ a graph $\Gamma(\pi)$ that encodes all the information about its covers and coverings in $H(D_n)$. We define these graphs as follows:
\begin{itemize}
\item they have $2n$ vertices labeled with the elements of the set $\{\pm 1,\ldots,\pm n\} $;
\item there is an edge from $i$ to $j$ and one from $-i$ to $-j$ if $(i,j)(-i,-j)\pi$ is adjacent to $\pi$ in $H(D_n)$;
\item there is an edge between $i$ and $j$ and one between $-i$ and $-j$ if $(i,-j)(-i,j)\pi$ is adjacent to $\pi$ in $H(D_n)$.
\end{itemize}
Notice that according to Proposition \ref{propdn} there are no edges between $i$ and $-i$ for any $i\in \{1,\ldots,n\}$; it follows that the number of elements adjacent to $\pi$ in $H(B_n)$ is half of the number of edges in $\Gamma(\pi)$.

\begin{example}
In Figure \ref{gammadn} an example of $\Gamma(\pi)$ can be found, here we have  $\pi=[2,-3,-4,1] $ $\in D_4$. The coverings of $\pi$ are $(2,4)(-2,-4)\pi$, $(3,4)(-3,-4)\pi$; $\pi$ covers $(1,2)(-1,-2)\pi$ $(1,-2)(-1,2)\pi$ $(1,4)(-1,-4)\pi$ and $(2,3)(-2,-3)\pi$.
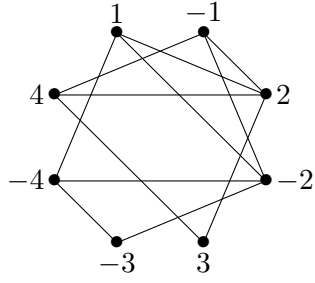
\begin{figure}
\centering
\begin{tikzpicture}[scale=1]
     \draw (-1.39,0.57) node[anchor=east]  {$4$};
     \draw (-1.39,-0.57) node[anchor=east]  {$-4$};
     \draw (1.39,0.57) node[anchor=west]  {$2$};
     \draw (1.39,-0.57) node[anchor=west]  {$-2$};
     \draw (0.57,-1.39) node[anchor=north]  {$3$};
     \draw (-0.57,-1.39) node[anchor=north]  {$-3$};
     \draw (0.57,1.39) node[anchor=south]  {$-1$};
     \draw (-0.57,1.39) node[anchor=south]  {$1$};
\foreach \Point in {(0.57,1.39),(-0.57,1.39),(-0.57,-1.39),(0.57,-1.39),(1.39,-0.57),(1.39,0.57),(-1.39,-0.57),(-1.39,0.57)}{
    \node at \Point {\textbullet};
}
\path
(0.57,1.39) edge [] node {} (1.39,0.57)
(0.57,1.39) edge [] node {} (1.39,-0.57)
(0.57,1.39) edge [] node {} (-1.39,0.57)
(-0.57,1.39) edge [] node {} (1.39,-0.57)
(-0.57,1.39) edge [] node {} (1.39,0.57)
(-0.57,1.39) edge [] node {} (-1.39,-0.57)
(1.39,0.57) edge [] node {} (0.57,-1.39)
(1.39,0.57) edge [] node {} (-1.39,0.57)
(1.39,-0.57) edge [] node {} (-0.57,-1.39)
(1.39,-0.57) edge [] node {} (-1.39,-0.57)
(0.57,-1.39) edge [] node {} (-1.39,0.57)
(-0.57,-1.39) edge [] node {} (-1.39,-0.57);
\end{tikzpicture}
\caption{${\Gamma}([2,-3,-4,1])$}\label{gammadn}
\end{figure}

\end{example}

We begin by proving the analogous of \cite[Lemma 3.2]{adinroich} (or Lemma \ref{lemman+1} of this thesis) for $D_n$.

\begin{prop}
For any $\pi\in D_n$ there is a vertex in $\Gamma(\pi)$ of degree at most $n+1$.
\end{prop}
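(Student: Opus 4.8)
The plan is to mimic the proof of Lemma \ref{lemman+1} for type $B_n$, exploiting the fact that $\Gamma(\pi)$ for $\pi\in D_n$ is symmetric under $i\mapsto -i$ (so $d_\Gamma(i)=d_\Gamma(-i)$ for every vertex) and has no edge joining $i$ to $-i$, by Proposition \ref{propdn}. Arguing by contradiction, I would assume every vertex of $\Gamma(\pi)$ has degree at least $n+2$, and pick a convenient ``extremal'' vertex whose neighbours can be controlled. Concretely, choose $\pi(n)$ (after relabelling we may assume the value in position $n$ is positive) and examine the set $B(1,\pi(n))$ of its neighbours in $\Gamma(\pi)$: by the covering rules in Proposition \ref{propdn}, these split into an ``upper'' set $U$ of values $\pi(i)$ with $i>0$, $\pi(i)>\pi(n)$ lying in strictly increasing position-value order, and a ``lower'' set $L$ of values coming from the type 1 and type 3/4 reflections, again forced into a monotone order along the one-line notation. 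The key structural point, exactly as in the $B_n$ argument, is that because these sets are monotone, any other vertex $\pi(p)$ is adjacent to at most one element of $U$ and at most one element of $L$.

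The heart of the argument is then a counting estimate. Let $p$ be the smallest index with $\pi(p)\in B(1,\pi(n))$; since the preimages of $B(1,\pi(n))$ under $\pi$ avoid $n$ (there is no $i,-i$ edge) they live in $\{\pm1,\dots,\pm(n-1)\}$, forcing $p<0$, and the position $n$ lies at the far right so the extremal neighbour sits on the ``negative'' side. I would then bound the degree of $\pi(p)$ itself:
\[
d_\Gamma(\pi(p))\le \bigl(2n - d_\Gamma(\pi(n))\bigr) + \bigl|B(1,\pi(n))\cap B(1,\pi(p))\bigr|,
\]
where the first term counts vertices not adjacent to $\pi(n)$ at all, and the intersection has size at most a small constant (at most $2$, one from $U$ and one from $L$, together with the automatic edge $\pi(p)$–$\pi(n)$ itself if it is counted). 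Plugging in $d_\Gamma(\pi(n))\ge n+2$ gives $d_\Gamma(\pi(p))\le 2n-(n+2)+ \text{const}$, which for the right constant is $\le n+1$, contradicting the assumption. This is the same arithmetic skeleton as in Lemma \ref{lemman+1}, with the bound $n+1$ rather than $n$ because $D_n$ lacks the type 2 ``loop'' reflections of $B_n$ but has an extra family (type 4), so the size of the overlap set is one larger; I would need to recheck the constant carefully.

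The main obstacle I anticipate is precisely the bookkeeping of the intersection $B(1,\pi(n))\cap B(1,\pi(p))$: in $D_n$ the neighbours of $\pi(n)$ arise from three distinct reflection types (1, 3 and 4 of Proposition \ref{propdn}), and one must verify that the monotonicity of position-value order holds \emph{across} the combined list $L\cup U$ (not just within each piece), so that $\pi(p)$ really can be adjacent to only boundedly many of them. A secondary subtlety is the choice between a positive or negative value in position $n$ and whether $\pi(p)$ itself could coincide with $-\pi(n)$ or with an extremal element of $U$ or $L$ — edge cases that must be ruled out or absorbed into the constant. Once the monotonicity-and-overlap lemma is pinned down, the degree count closes immediately, so I would front-load the effort into stating and proving that structural claim cleanly, then finish with the two-line arithmetic as above.
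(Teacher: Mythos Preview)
Your proposal is correct and follows essentially the same route as the paper: assume every vertex has degree $\ge n+2$, look at $\pi(n)$, split $B(1,\pi(n))$ into an increasing set $U$ and a decreasing set $L$, take $\pi(p)$ with $p$ minimal, and bound $|B(1,\pi(p))|$ via $|B(1,\pi(p))\cap B(1,\pi(n))|$. The one point you left open is the value of the constant: it is $3$, but not because of the edge $\pi(p)$--$\pi(n)$ (that edge does not lie in the intersection, since $\pi(n)\notin B(1,\pi(n))$); rather, in $D_n$ the type~3/4 reflections allow $\pi(p)$ to be adjacent to \emph{both} $\pi(p_-)$ and $-\pi(p_-)$, so the intersection consists of at most $\pi(p_+)\in U$, $\pi(p_-)\in L$, and possibly $-\pi(p_-)$, giving $|B(1,\pi(p))|\le (2n-(n+2))+3=n+1$.
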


\begin{proof}
We consider $\pi(n)\in V(\Gamma(\pi))$ and assume that $\pi(n)>0$; we suppose by contradiction that there are at least $n+2$ edges stemming from every vertex of $\Gamma(\pi)$.
We denote by $U$ ( resp. $L$) the set of elements adjacent to $\pi(n)$ in $\Gamma(\pi)$ and bigger (resp. smaller) than it.
We fix $\pi(p)$ as the vertex adjacent to $\pi(n)$ with $p$ minimal.
We want to study the number of vertices that are adjacent to both $\pi(p)$ and $\pi(n)$. We denote by $\pi(p_+)$ the smallest element in $U\setminus \{\pi(p)\}$, and by $\pi(p_-)$ the biggest element in $L\setminus \{\pi(p)\}$. The following holds:
\begin{itemize}
    \item $B(1,\pi(p))\cap U$ is empty if $U\setminus {\pi(p)}=\emptyset$ and is the singleton $\{\pi(p_+)\}$ otherwise;
    \item $\pi(p)$ is adjacent to $\pi(p_-)$ if $L\setminus \{\pi(p)\}\neq \emptyset$, it than may be adjacent to $-\pi(p_-)$. We conclude that $|B(1,\pi(p))\cap U|\leq 2$.
\end{itemize}
It follows that there are at most three vertices adjacent to both $\pi(p)$ and $\pi(n)$. As a consequence we have this inequality:
$$
|B(1,\pi(p))|\leq 2n-(n+2)+3=n+1.
$$
This concludes the proof for the case $\pi(n)>0$. Because we haven't used the fact the set $\{i\in\{1,\ldots,n\}|\pi(i)<0\}$ has even cardinality the same proof stands for case $\pi(n)<0$.
\end{proof}

Again, if $n$ is odd, more can be said:

\begin{prop}
If $n>3$ and odd, then there is a vertex in $\Gamma(n)$ of degree at most $n$.
\end{prop}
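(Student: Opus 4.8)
The plan is to mirror the structure of the previous theorem (the odd-$B_n$ case), but exploit the fact that in $D_n$ the graph $\Gamma(\pi)$ has no loops and no "sign-type-2" reflections, so the combinatorics is slightly cleaner. As before I would argue by contradiction: assume every vertex of $\Gamma(\pi)$ has degree at least $n+1$. Fix the vertex $\pi(n)$, assumed positive without loss of generality (using that $D_n$ is stable under the usual sign symmetry, since the parity condition is not used). Partition the neighbours of $\pi(n)$ into $U$ (those $\pi(i)$ with $\pi(i)>\pi(n)$, forced by Proposition \ref{propdn} types 1 and 4 to occur in increasing order of index) and $L$ (those with $\pi(i)<\pi(n)$, in decreasing order of index, possibly negative). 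As in the $B_n$ proof, record the basic counting estimate: if $\pi(i)$ and $\pi(n)$ both have degree $\geq n+1$ in $\tilde\Gamma$, then $|B(1,\pi(n))\cap B(1,\pi(i))|\geq 2$.

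Next I would pick $p$ minimal with $\pi(p)\in B(1,\pi(n))$; since $\pi^{-1}(B(1,\pi(n)))\subseteq\{\pm1,\dots,\pm(n-1)\}$ and $|B(1,\pi(n))|\geq n+1$, necessarily $p<0$. The key local observation is that $\pi(p)$ is adjacent to at most one element of $U$ and at most two elements of $L$ (namely $\pi(p_-)$ and possibly $-\pi(p_-)$, using the monotonicity of $L$ together with the type-3/type-4 rules), hence to at most three common neighbours with $\pi(n)$. This gives $|B(1,\pi(p))|\leq 2n-(n+1)+3=n+2$, which only recovers the previous proposition. To improve the bound by one I would split into the parity count exactly as in the odd-$B_n$ argument: writing the decreasing part $L\cup\{\pi(p)\}$ and the increasing part $U$ in terms of the refined sets $A,B,C,D,E,F$ (now with no loop/center contribution, so the identity reads something like $n+1\leq 2|D|+|C|$ or $2|D|+|E|+|C|$ depending on the configuration), the hypothesis that $n$ is odd forces a specific residue, which I would then contradict by producing a vertex — typically $\pi(p_-)$, $-\pi(p)$, or the largest element of $B$ — that fails to be adjacent to enough of $B(1,\pi(n))$. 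Because $D_n\subset B_n$ and the covering relations of $D_n$ are a subset of the relevant ones in $B_n$, many of the claims 1.1)–1.9) from the previous proof transfer almost verbatim; I would re-derive only the ones affected by the absence of type-2 reflections.

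I expect the main obstacle to be the same as in the $B_n$ proof: the case analysis near the vertex $\pi(p)$, where one must show that the three potential common neighbours with $\pi(n)$ cannot all be realized simultaneously once $n$ is odd, and in particular handling the sub-case where $\pi(n)$ is adjacent to a "close" element (the analogue of $\pi(n)=1$, which in $D_n$ cannot arise via a type-2 reflection but can still arise via a type-3 or type-4 crossing, forcing $\pi(n)$ small). The condition $n>3$ enters precisely to guarantee that the sets $A$, $B$ above are large enough that the monotonicity constraints bite; for $n=3$ the count is too coarse and the statement can genuinely fail, which is why the bound is stated for $n>3$. Finally I would note that the resulting vertex of degree $\leq n$ (for $n$ odd) feeds into the induction in the proof of Theorem \ref{degdn} exactly as Lemma \ref{lemman+1} and its odd refinement feed into the proof of Theorem \ref{teobn}.
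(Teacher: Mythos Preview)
Your overall skeleton (contradiction, the $|B(1,\pi(n))\cap B(1,\pi(i))|\geq 2$ estimate, choosing $p$ minimal, parity count) matches the paper. However there is a genuine conceptual gap in the transfer from $B_n$ to $D_n$.

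You write that ``the covering relations of $D_n$ are a subset of the relevant ones in $B_n$'' and that the combinatorics is ``slightly cleaner''. This is false, and it is precisely where the $D_n$ proof diverges from the $B_n$ one. The Bruhat order on $D_n$ is the order induced from $B_n$, but the \emph{covering} relations are not: Proposition~\ref{propdn} has, in addition to the analogues of $B_n$ types 1 and 3, two further types (3 and 4 in the paper's numbering) coming from reflections $(\pi(i),-\pi(j))(-\pi(i),\pi(j))$ with $\pi(i),\pi(j)$ of the \emph{same} sign. In $B_n$ such a reflection never gives a cover because the type-2 element $(\pi(i),-\pi(i))\pi$ lies strictly in between; in $D_n$ that intermediate element is absent, so these become genuine covers. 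Concretely, $\pi(n)$ can be adjacent in $\Gamma(\pi)$ to some $\pi(k)$ with $k<0$ and $\pi(k)<0$, a configuration that simply cannot occur in the $B_n$ analysis.

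The paper therefore does \emph{not} reduce to the $B_n$ argument but instead runs a three-way case split on whether such neighbours exist: Case~1 (none, and then the $A,\dots,F$ decomposition you describe, itself split into subcases 1.a/1.b/1.c according to whether $|B(1,\pi(p))\cap B(1,\pi(n))|$ is $2$ or $3$ and whether $B\cup C=\emptyset$), Case~2 (such neighbours exist but no $0<\pi(j)<\pi(n)$ with $j>0$; this forces $\pi(n)=1$ and is handled by a short monotonicity argument), and Case~3 (both kinds present, introducing a seventh set $G$ and three further subcases 3.a/3.b/3.c). Your plan covers only Case~1 and gestures at Case~2; Case~3 is entirely missing from your outline and is not a minor variant of the others. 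Without it the argument is incomplete.
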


\begin{proof}
We assume that $\pi(n)>0$, also we assume that $|B(1,\pi(i))|\geq n+1$ in $\Gamma(\pi)$ for any $i\in \{\pm 1,\ldots,\pm n\}$. 

First we notice the following fact that is of great utility to tackle the proof.
Let $\pi(i)\neq \pi(n)$ be an element in $\{\pm1,\ldots,\pm\}$ adjacent to exactly $k$ elements in $B(1,\pi(n))$. Because $\Gamma(\pi)$ has $2n$ vertices we have that:
\[
(B(1,\pi(n))\cup B(1,\pi(i)))^c\leq 2n-2(n+1)+k=k-2.
\]  
In particular we  have that $B(1,\pi(n))\cap B(1,\pi(i))$ has at least two elements, and that there are at most $k-2$ vertices in $\Gamma(\pi)$ that are not adjacent to both $\pi(i)$ and $\pi(n)$.
We go on with the rest of the proof which is divided in three cases:
\begin{itemize}
\item[1] there are no elements $\pi(k)$ adjacent to $\pi(n)$ such that $\pi(k)<0$ and $k<0$ (see Picture \ref{picturedn1});
\item[2] there are no elements $\pi(j)$ adjacent $\pi(n)$ such that $0<\pi(j)<\pi(n)$ and $j>0$ and there is at least one element $\pi(k)$ such that $\pi(k)<0$ and $j<0$ (see Picture \ref{picturedn2}); 
\item[3] among the elements adjacent to $\pi(n)$ there is at least one $\pi(k)<0$ with $k<0$ and at least one $0<\pi(j)<\pi(n)$ with $j>0$  (see Picture \ref{picturedn3}).
\end{itemize}

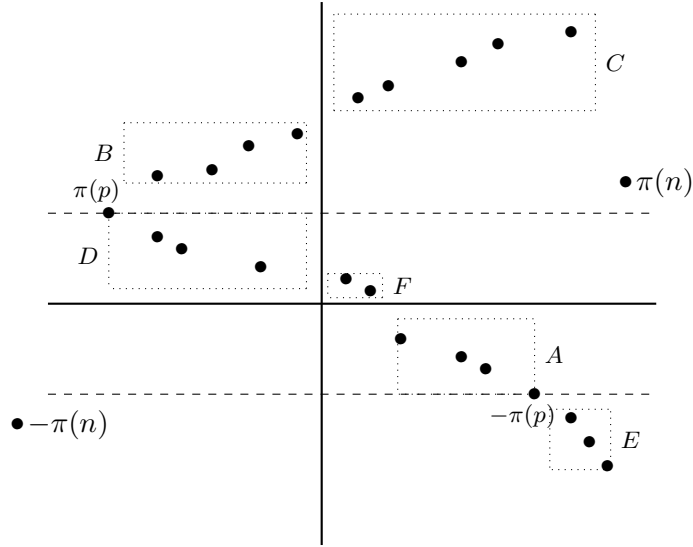
\begin{figure}
  \centering
   \begin{tikzpicture}[scale=.8]
      \draw (4.75,-2.25) node[anchor=west]  {\footnotesize $E$};
      \draw (3.5,-0.85) node[anchor=west]  {\footnotesize $A$};
      \draw (1,0.3) node[anchor=west]  {\footnotesize $F$};
      \draw (4.5,4) node[anchor=west]  {\footnotesize $C$}; 
      \draw (-3.25,2.5) node[anchor=east]  {\footnotesize $B$}; 
      \draw (-3.5,.8) node[anchor=east]  {\footnotesize $D$}; 
      \draw (-5,-2) node[anchor=west]  {$-\pi(n)$};  %
      \draw (5,2) node[anchor=west]  {$\pi(n)$};  %
      \draw (-3.7,1.5) node[anchor=south]  {\footnotesize $\pi(p)$};
      \draw (3.3,-1.5) node[anchor=north]  {\footnotesize $-\pi(p)$};
\foreach \Point in {(5,2),
                       (-5,-2),
                              (3.5,-1.5), 
                              (-3.5,1.5), 
               (-2.7,1.1),(-2.3,0.9),(-1,0.6), 
               (0.4,0.4),(0.8,0.2),
               (2.7,-1.1),(2.3,-0.9),(1.3,-0.6),
               (4.1,-1.9),(4.4,-2.3),(4.7,-2.7),               
               (-.4,2.8),(-1.2,2.6),(-1.8,2.2),(-2.7,2.1),
               (0.6,3.4),(1.1,3.6),(2.3,4),(2.9,4.3),(4.1,4.5)
}{
     \node at \Point {\textbullet};
}
     \draw[thick] (-4.5,0) -- +(10,0); 
     \draw[thick] (0,-4) -- +(0,9); 
     \draw[dashed] (-4.5,1.5) -- +(10,0); 
     \draw[dashed] (-4.5,-1.5) -- +(10,0); 
     \draw[dotted] (-3.5,1.5) -- +(3.25,0);
     \draw[dotted] (-.25,.25) -- +(-3.25,0);
     \draw[dotted] (-.25,.25) -- +(0,1.25);
     \draw[dotted] (-3.5,1.5) -- +(0,-1.25);
     \draw[dotted] (-3.25,3) -- +(3,0);
     \draw[dotted] (-.25,2) -- +(-3,0);
     \draw[dotted] (-.25,2) -- +(0,1);
     \draw[dotted] (-3.25,3) -- +(0,-1);
     \draw[dotted] (3.5,-1.5) -- +(-2.25,0);
     \draw[dotted] (1.25,-.25) -- +(2.25,0);
     \draw[dotted] (1.25,-.25) -- +(0,-1.25);
     \draw[dotted] (3.5,-1.5) -- +(0,1.25);
     \draw[dotted] (3.75,-1.75) -- +(1,0);
     \draw[dotted] (4.75,-2.75) -- +(-1,0);
     \draw[dotted] (4.75,-2.75) -- +(0,1);
     \draw[dotted] (3.75,-1.75) -- +(0,-1);
     \draw[dotted] (0.1,.5) -- +(.9,0);
     \draw[dotted] (1,0.1) -- +(-.9,0);
     \draw[dotted] (1,0.1) -- +(0,.4);
     \draw[dotted] (0.1,.5) -- +(0,-.4);
     \draw[dotted] (0.2,3.2) -- +(4.3,0);
     \draw[dotted] (4.5,4.8) -- +(-4.3,0);
     \draw[dotted] (4.5,4.8) -- +(0,-1.6);
     \draw[dotted] (0.2,3.2) -- +(0,1.6);
   \end{tikzpicture}
\caption{1st case}
\label{picturedn1}
\end{figure}

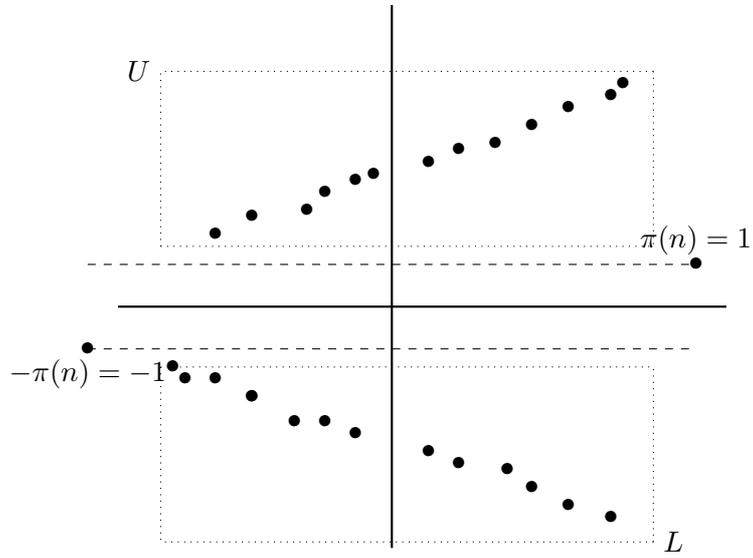
\begin{figure}
  \centering
   \begin{tikzpicture}[scale=.8]
            \draw (4.3,-3.9) node[anchor=west]  {$L$};  %
            \draw (-3.8,3.9) node[anchor=east]  {$U$};  %
            \draw (-5,-.7) node[anchor=north]  {$-\pi(n)=-1$};  %
      \draw (5,.7) node[anchor=south]  {$\pi(n)=1$};  %
\foreach \Point in {(5,.7),
                    (-5,-.7),
               (-0.6,2.1),(-1.1,1.9),(-2.3,1.5),             			   (-2.9,1.2),
               (0.6,2.4),(1.1,2.6),(1.7,2.7),     			   (2.3,3),			                       			   (2.9,3.3),(3.6,3.5),(3.8,3.7),
               (-.3,2.2),(-1.4,1.6),
               (-3.4,-1.2),(-2.3,-1.5),(-1.6,-1.9),
               (-0.6,-2.1),(-1.1,-1.9),(-2.3,-1.5),             			   (-2.9,-1.2),(-3.6,-1),
               (0.6,-2.4),(1.1,-2.6),					           (1.9,-2.7),(2.3,-3),			                       			   (2.9,-3.3),(3.6,-3.5)
}{
     \node at \Point {\textbullet};
}
     \draw[thick] (-4.5,0) -- +(10,0); 
     \draw[thick] (0,-4) -- +(0,9); 
     \draw[dashed] (-5,.7) -- +(10,0); 
     \draw[dashed] (-5,-.7) -- +(10,0); 
     \draw[dotted] (4.3,3.9) -- +(-8.1,0);
     \draw[dotted] (-3.8,1) -- +(+8.1,0);
     \draw[dotted] (4.3,3.9) -- +(0,-2.9);
     \draw[dotted] (-3.8,1) -- +(0,+2.9);
     \draw[dotted] (4.3,-3.9) -- +(-8.1,0);
     \draw[dotted] (-3.8,-1) -- +(+8.1,0);
     \draw[dotted] (4.3,-3.9) -- +(0,+2.9);
     \draw[dotted] (-3.8,-1) -- +(0,-2.9);
   \end{tikzpicture}
\caption{2nd case}
\label{picturedn2}
\end{figure}

\begin{figure}
  \centering
   \begin{tikzpicture}[scale=.8]
      \draw (4.75,-2.25) node[anchor=west]  {\footnotesize $E$};
      \draw (3.5,-0.85) node[anchor=west]  {\footnotesize $A$};
      \draw (1,0.65) node[anchor=west]  {\footnotesize $F$};
      \draw (-1,-0.65) node[anchor=east]  {\footnotesize $G$};
      \draw (4.5,4) node[anchor=west]  {\footnotesize $C$}; 
      \draw (-3.25,2.5) node[anchor=east]  {\footnotesize $B$}; 
      \draw (-3.5,.8) node[anchor=east]  {\footnotesize $D$}; 
      \draw (-5,-2) node[anchor=west]  {$-\pi(n)$};  %
      \draw (5,2) node[anchor=west]  {$\pi(n)$};  %
      \draw (-3.7,1.5) node[anchor=south]  {\footnotesize $\pi(p)$};
      \draw (3.3,-1.5) node[anchor=north]  {\footnotesize $-\pi(p)$};
\foreach \Point in {(5,2),
                       (-5,-2),
                              (3.5,-1.5), 
                              (-3.5,1.5), 
               (-2.7,1.2),(-2.3,1), 
               (-0.65,-0.65),
               (0.65,0.65),
               (2.7,-1.2),(2.3,-1),
               (4.1,-1.9),(4.4,-2.3),(4.7,-2.7),               
               (-.4,2.8),(-1.2,2.6),(-1.8,2.2),(-2.7,2.1),
               (0.6,3.4),(1.1,3.6),(2.3,4),(2.9,4.3),(4.1,4.5)
}{
     \node at \Point {\textbullet};
}
     \draw[thick] (-4.5,0) -- +(10,0); 
     \draw[thick] (0,-4) -- +(0,9); 
     \draw[dashed] (-4.5,1.5) -- +(10,0); 
     \draw[dashed] (-4.5,-1.5) -- +(10,0); 
     \draw[dotted] (-3.5,1.5) -- +(2.5,0);
     \draw[dotted] (-1,.7) -- +(-2.5,0);
     \draw[dotted] (-1,.7) -- +(0,.8);
     \draw[dotted] (-3.5,1.5) -- +(0,-.8);
     \draw[dotted] (-3.25,3) -- +(3,0);
     \draw[dotted] (-.25,2) -- +(-3,0);
     \draw[dotted] (-.25,2) -- +(0,1);
     \draw[dotted] (-3.25,3) -- +(0,-1);
     \draw[dotted] (3.5,-1.5) -- +(-2,0);
     \draw[dotted] (1.5,-.7) -- +(2,0);
     \draw[dotted] (1.5,-.7) -- +(0,-.8);
     \draw[dotted] (3.5,-1.5) -- +(0,.8);
     \draw[dotted] (3.75,-1.75) -- +(1,0);
     \draw[dotted] (4.75,-2.75) -- +(-1,0);
     \draw[dotted] (4.75,-2.75) -- +(0,1);
     \draw[dotted] (3.75,-1.75) -- +(0,-1);
     \draw[dotted] (0.3,.3) -- +(0,0.7);
     \draw[dotted] (1,1) -- +(-.0,-0.7);
     \draw[dotted] (0.3,.3) -- +(.7,0);
     \draw[dotted] (1,1) -- +(-.7,0);
     \draw[dotted] (-0.3,-.3) -- +(0,-0.7);
     \draw[dotted] (-1,-1) -- +(-.0,0.7);
     \draw[dotted] (-0.3,-.3) -- +(-.7,0);
     \draw[dotted] (-1,-1) -- +(.7,0);
     \draw[dotted] (0.2,3.2) -- +(4.3,0);
     \draw[dotted] (4.5,4.8) -- +(-4.3,0);
     \draw[dotted] (4.5,4.8) -- +(0,-1.6);
     \draw[dotted] (0.2,3.2) -- +(0,1.6);
   \end{tikzpicture}
\caption{3rd case}
\label{picturedn3}
\end{figure}
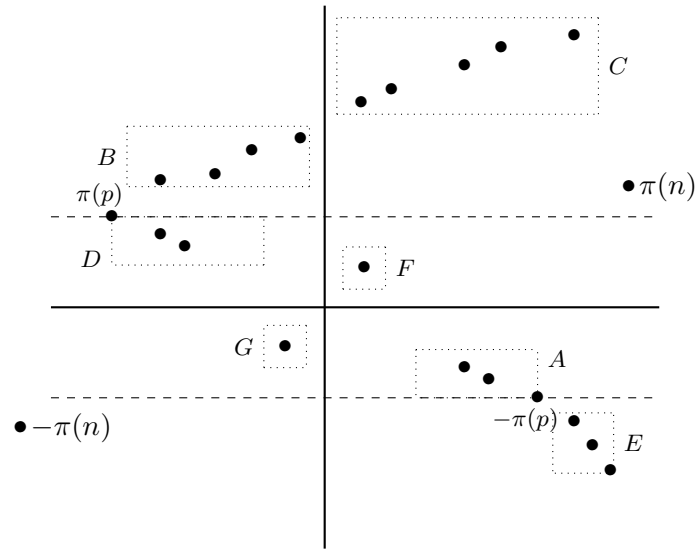

\textbf{1st case}.
Take $\pi(p)$ as the element in $B(1,\pi(n))$ with the smallest $p$, because $|B(1,\pi(n))|>n$ and we are in the 1st case, we see that $p<0$ and $\pi(p)>0$.
We divide the elements adjacent to $\pi(n)$ in seven sets (see Figure \ref{picturedn1}):
\begin{itemize}
\item $A:=\{\pi(i)\in \{-n,\ldots,\hat{0},\ldots,n\}|i>0,\, -\pi(p)\leq \pi(i)<0\}$, notice that a priori $-\pi(p)$ may not be adjacent to $\pi(n)$;
\item $B:=\{\pi(i)\in \{-n,\ldots,\hat{0},\ldots,n\}|\pi(i)>\pi(p),\,i<0\}$;
\item $C:=\{\pi(i)\in \{-n,\ldots,\hat{0},\ldots,n\}|\pi(i)>\pi(p),\, i>0\}$;
\item $D:=\{\pi(i)\in \{-n,\ldots,\hat{0},\ldots,n\}|\pi(i)\leq \pi(p),\,i<0\}$ , in particular we notice that $\pi(p)\in D$;
\item $E:=\{\pi(i)\in \{-n,\ldots,\hat{0},\ldots,n\}|\pi(i)<0,\,\pi(i)<-\pi(p)\}$;
\item $F:=\{\pi(i)\in \{-n,\ldots,\hat{0},\ldots,n\}|0\leq \pi(i)\leq \pi(n),\,i> 0\}$; 
\item $G:=\{\pi(i)\in \{-n,\ldots,\hat{0},\ldots,n\}|\pi(i)<0,\,i<0\}$.
\end{itemize}
For this first case $G=\emptyset$.
Given a set $X\subset \{\pm 1,\ldots,\pm n\}$ we define $-X:=\{-x|x\in X\}$.
We notice that $|C\cup F\cup A\cup E|\leq n-1$, from this we obtain that $|D\cup B|\geq 2$. We study the number of elements in $B(1,\pi(p))\cap B(1,\pi(n))$ in $\Gamma(\pi)$:
\begin{itemize}
\item $\pi(p)$ is adjacent to the smallest element in $B\cup C$ if this is not empty, we denote this element $\pi(p_+)$;
\item $\pi(p)$ is adjacent to the biggest element in $(D\cup F \cup A \cup E)\setminus \{\pi(p)\}$ if this is non-empty, we refer to it as to $\pi(p_-)$;
\item If $-\pi(p_-)\in B(1,\pi(n))$ then $\pi(p)$ may be adjacent to it.
\end{itemize} 

Through this remark we study the following three subcases:
\begin{itemize}
\item[1.a)] This is the case when $B(1,\pi(p))\cap B(1,\pi(n))$ have two elements: $\pi(p_-)$ and $\pi(p)_+$;  
\item[1.b)] this is the case when $B(1,\pi(p))\cap B(1,\pi(n))$ have three elements: $\pi(p_-)$, $\pi(p)_+$ and $-\pi(p_-)$;  
\item[1.c)] this is the case when $B(1,\pi(p))\cap B(1,\pi(n))$ have two elements: $\pi(p_-)$ and $-\pi(p_-)$. Here we have $B\cup C=\emptyset$.
\end{itemize}
Note that we are not considering the case of $(D\cup F \cup A \cup E)\setminus \{\pi(p)\}=\emptyset$, because there must be at least two elements in $B(1,\pi(p))\cap B(1,\pi(n))$.
\textbf{Subcase 1.a}.
We assume that $|B(\pi(p))\cap B(\pi(n))|=2$. In this case we need that any element not adjacent to $\pi(p)$ is in $B(\pi(n))\setminus\{\pi(p_-),\pi(p_+)\}$ and viceversa. We claim:
\begin{itemize}
    \item[1a.1]$B= \emptyset$; 
    \item[1a.2]$D=-A$;
    \item[1a.3]$F=\emptyset$;
    \item[1a.4]$|E|\leq 1$;
    \item[1a.5]$E=\emptyset$.
\end{itemize}

\underline{Proof of 1a.1}.
We know that $-\pi(p)\notin B(1,\pi(p))$. This implies that $-\pi(p)\in A$, therefore $-\pi(p)\in D\cup A\cup F$. We consider $\pi(b)\in -B$, we see that $b\geq p$, this implies that $-b\leq -p$, therefore $-\pi(b)\notin E, B(1,\pi(n))$. This implies that $B=\emptyset$.

\underline{Proof of 1a.2}. We know that $-\pi(p)$ and $\pi(p)$ are not adjacent, so $-\pi(p)$ must be adjacent to $\pi(n)$ and therefore be an element of $A$. 
From $B=\emptyset$ and $|B\cup D|\geq 2$ we obtain that $|D|\geq 2$, in particular $\pi(p_-)\in D$ and $\pi(p)$ is not adjacent to any element in $-D$ implying $-D\subset A$. We look now at $-\pi(p)$, this is adjacent to only one element in $A$, this element is $-\pi(p_-)$. Assume that there is a $\pi(a)\in A\setminus -D$, we have that $\pi(a)$ is not adjacent to $-\pi(p)$, then $-\pi(a)$ is not adjacent to $\pi(p)$ nor to $\pi(n)$, obtaining a contradiction.
 
 \underline{Proof of 1a.3}. 
From the fact that $\pi(p_-)$ is in $D$ and that the elements in $D\cup F\cup A \cup E$ are in ascending order it follows that $\pi(p)$ is not adjacent to any element in $-F$. Because we are assuming $G=\emptyset$ we obtain that $\pi(n)$ is not adjacent to any element in $-F$, it follows that $F=\emptyset$. 
 
\underline{Proof of 1a.4}. By the monotonicity of the elements in $E$ follows that $\pi(p)$ is adjacent to at most one elements in $-E$, this implies that $|E|\leq 1$.

\underline{Proof of 1a.5}.We study the elements in $C$: we know that $\pi(p_+)\in C\neq \emptyset$ moreover we want all the elements in $-C$ to be adjacent with $\pi(p)$. This implies that for any $\pi(c)\in C$, $0<-p_-<c<-p$ except for the biggest $c$ that may be bigger than $-p$. Let now $\pi(e)\in E$, we see that the elements in $B(1,\pi(e))\cup B(1,\pi(n))$ are at most two: $-\pi(p)$ and $-\pi(c)\in C$ with $c$ maximal. 
We already noticed that $|B(1,\pi(e))\cap B(1,\pi(n))|=2$ implies that every element is adjacent to either $\pi(e)$ either $\pi(n).$
We remark that $-\pi(e)$ is not adjacent to both $\pi(e)$ and $\pi(n)$ and conclude that $E=\emptyset$.

\vspace{3pt}

We are in the situation where the only non-empty subsets of $B(1,\pi(n))$ are $A,D$ and $C$. We are studying case $n$ odd and we assumed $|B(1,c)|=n+1$, because $A=-D$ we must have that $|C|\geq 2$. Let $\pi(c)\in C$ be the one with the maximal $c$, we study $B(1,\pi(c))\cap B(1,\pi(n))$ and see that there are exactly two elements in it: $-\pi(p)$ and $\pi(c')$ which is the one with the maximal $c'$ in $C\setminus \{\pi(c)\}$. We also know that $-\pi(c)$
is not adjacent to $\pi(c)$ nor to $\pi(n)$. 
This implies that $|C|\leq 1$ which is absurd.

\textbf{Subcase 1.b)}. Now we assume that $|B(1,\pi(p))\cap B(1,\pi(n))|=3$. This can happen only if there are exactly two elements in $D$ (namely $\pi(p)$ and $\pi(p_-)$), no elements in $F$ and at least one element in $A$ ($-\pi(p_-)$). Notice that in this subcase we are allowing at most one element not to be adjacent to both $\pi(n)$ and $\pi(p)$. We claim:
\begin{itemize}
    \item[1b.1] $|B|\leq 1$;
    \item[1b.2] $|E|\leq 2$;
    \item[1b.3] $-\pi(p)\in A \rightarrow A=-D$;
    \item[1b.4] $A\neq -D\Rightarrow |E|=\emptyset$;
    \item[1b.5] $A=-D\Rightarrow |E|\leq 1$;
    \item[1b.6] $|A|=2$;
    \item[1b.7] $|C|\leq 1$;
    \item[1b.8] $E=\emptyset$.
\end{itemize}

\underline{Proof of 1b.1}. We see that the elements in $-B$ are not adjacent to $\pi(n)$. It is sufficient to notice that for any $\pi(b)\in B$, $-b<-p<n$ and $-\pi(b)<-\pi(p)<\pi(n)$. There can be at most one element not adjacent to $\pi(1)$ and $\pi(n)$, therefore $|-B|\leq 1$.

\underline{Proof of 1b.2}.We consider the reasoning did in the previous subcase for $1a.4$  to study $|E|\leq 1$. It hold in the present subcase as well except we now allow the existence of one element not adjacent to $\pi(p)$ and $\pi(n)$.

\underline{Proof of 1b.3}.
Assume that $-\pi(p)\in A$, we have that this is its rightmost element. Conversely, $-\pi(p_-)$ is the leftmost element in $A$. We have that if there was an element in $A\setminus \{-\pi(p_-),\pi(p)\}$, then $-\pi(p_-)$ and $-\pi(p)$ wouldn't be adjacent, by the monotonicity of the elements in $A$. We conclude that $A=\{-\pi(p),-\pi(p_-)\}=-D$ because $\pi(p)$ and $\pi(p_-)$ are adjacent in $\Gamma(\pi)$ therefore also $-\pi(p)$ and $-\pi(p_-)$ must be. 

\underline{Proof of 1b.4}. Suppose that $-\pi(p)\notin A$. This implies that there are some elements $\pi(a_1)<\ldots<\pi(a_k)$ in $A$ such that $\pi(p)<\pi(a_j)<0$ for $j=1,\ldots,k$ and such that some of these $a_j$ satisfy $p<a_j$. This last condition makes $\pi(n)$ and $-\pi(p)$ not adjacent in $\Gamma(\pi)$. We have that $-\pi(p)$ is not adjacent to $\pi(p)$ nor to $\pi(n)$, meaning that all the other elements must be adjacent to either $\pi(p)$ either $\pi(n)$. We consider the elements in $-C$, we want them to be adjacent to $\pi(p)$, in particular this means that given $C=\{\pi(c_1)<\pi(c_2)<\ldots<\pi(c_t))\}$ then $\pi(c_t)<\pi(a_k)$. We consider now $\pi(e)$ the rightmost element in $E$, we see that $B(1,\pi(e))\cap B(1,\pi(n))$ has at most one element which is the rightmost in $A\cup E\setminus \{\pi(e)\}$. We conclude that $\pi(e)$ is adjacent to at most $n$ vertices in $\Gamma(\pi)$ and so $E=\emptyset$.

\underline{Proof of 1b.5}. Assume that there are exactly two elements in $E$: $\pi(e_1)<\pi(e_2)$. To prove this claim we consider the elements in $-C$ and notice that these must be all adjacent to $\pi(p)$ except at most one. Therefore all the elements $\{\pi(c_1)<\ldots<\pi(c_t)\}$ in $C$ must satisfy $\pi(c_1)<\ldots<\pi(c_{t-1})<\pi(e_1)$, except $\pi(c_t)$ that may be bigger that $\pi(e_1)$. We consider the elements adjacent to $\pi(e_2)$ and $\pi(n)$ and see that these are at most two. This means that there can't be an element in $(B(1,\pi(e_2))\cup B(1,\pi(n))^c$. We conclude noticing that $-\pi(e_2)$ isn't adjacent to $\pi(e_2)$ and $\pi(n)$.

\underline{Proof of 1b.6}. The claim is trivial if $A=-D$. Assuming $A\neq -D$ we still have that the  leftmost element in $A$ is $-\pi(p_-)$ and all the other elements in $A$ must be at the right of $-\pi(p)$. Studying $A\neq -D \Rightarrow E=\emptyset$
 we have described the positions of the elements in $A$ and the elements in $C$. Let $\pi(a_j)$ be the rightmost element in $A$, we have that $B(1,\pi(a_j))\cap B(1,\pi(n))$ has at most two elements: one in $C$, one in $A$. We notice now that $-\pi(a_j)$ is not adjacent to $\pi(n)$. Because $-\pi(a_j)\in (B(1,\pi(a_j))\cup B(1,\pi(n)))^c$ we conclude.
 
\underline{Proof of 1b.7}. Assume that $|C|\geq 2$ and let $\pi(c)$ be its rightmost element. We study $B(1,\pi(c))\cap B(1,\pi(n))$, we see that there are at most two elements from $A\cup E$ and at most one element from $C\setminus\{\pi(c)\}$. We conclude noticing that $-\pi(c)$ is not adjacent to $\pi(c)$ and $\pi(n)$.

\underline{Proof of 1b.8}. This can be proved following the reasoning of \underline{$E=\emptyset$} in subcase $1.a$.

We are studying the case of $n\geq 5$, this implies $|B(1,\pi(n))|\geq 6$. So far we have proved that in this subcase
\[
|A|+|D|+|B|+|C|\leq 6.
\]
We assume therefore $|B|=|C|=1$ and $n=5$. We notice that $\pi(c),-\pi(c)$ $,-\pi(b)$ $,-\pi(n)$
and $\pi(i)$ cannot be adjacent to $\pi(c)$, this implies that $|B(1,\pi(c))|\leq n$ which is a contradiction that proves Subcase $1.b$ . 

\textbf{Subcase 1.c)}. In this case we are assuming that $B(1,\pi(p))\cap B(1,\pi(n))=\{\pi(p_-),\pi(p_+)\}$, in particular, this means that there is no $\pi(p_+)$ and so $B\cup C=\emptyset$. Also, we have that $F=\emptyset$, therefore $B(1,\pi(n))=D\cup A\cup E$. Let $\pi(e)$ the rightmost element in $E$, then $B(1,\pi(e))$ and $B(1,\pi(n))$ have at most one element in common, this is one element in $A\cup E\setminus \{\pi(e)\}$. Because $|B(1,\pi(n))|>n$, we obtain that $|B(1,\pi(e))|<n$. This briefly concludes the current subcase.

\textbf{2nd Case}
We are assuming that there is at least one $\pi(k)\in B(1,\pi(n))$ such that $\pi(k)<0$ and $k<0$ and no elements $\pi(j)\in B(1,\pi(n))$ such that $0<\pi(j)<\pi(n)$ and $j>0$. 

\underline{Claim: $\pi(k)<0$ and $k<0\Rightarrow  \pi(k)<-\pi(n)$}. If we have a $\pi(k)\in B(1,\pi(n))$ such that $-\pi(n)<\pi(k)<0$ and $k<0$ then $-\pi(k)$ would satisfy  $0<\pi(k)<\pi(n)$ and such that $k>0$. Thus we don't want $-\pi(k)$ to be adjacent to $\pi(n)$, we notice that this is possible only if there is a $\pi(j)$ in $B(1,\pi(n))$ such that $0<\pi(j)<\pi(n)$ and $j>0$, but this goes against our assumptions for this second Case. 

Therefore we have that there are no elements such that $-\pi(n)<\pi(k)<0$, we notice that this is possible if and only if $\pi(n)=1$.

We divide the set of elements adjacent to $\pi(n)$ in two sets:
\begin{itemize}
\item we let $U$ be the set of elements $\pi(u)$ adjacent to $\pi(n)$ such that $\pi(u)>\pi(n)$;
\item we let $L$ be the set of elements $\pi(l)$ adjacent to $\pi(n)$ such that $\pi(l)<-\pi(n)$;
\end{itemize}

The elements in $U$ must be in ascending order while the elements in $L$ must be in descending order (see Picture \ref{picturedn2}). In particular the set $L$ and the set $-U$ have at most one element in common, meaning that $|B(1,\pi(n))\cap B(1,-\pi(n))|\leq 2$. Now notice that $\pi(n),-\pi(n)\notin B(1,\pi(n))$, this implies that  the at least $n+1$ elements in 
$B(1,\pi(n))$ range in the set $\{\pm\pi(1),\ldots,\pm\pi(n-1)\}$. In particular 
there are at least two elements $\pi(k)>0$ in $B(1,\pi(n))$ such that $-\pi(k)\in B(1,\pi(n))$. This would imply that $|B(1,\pi(n))\cap B(1,-\pi(n))|\geq 4$, we reached therefore a contradiction.

\textbf{3rd Case}.
In this case we assume that there are some elements $\{\pi(f_i)\}_I\subset B(1,\pi(n))$ such that
$0<\pi(f_i)<\pi(n)$ and $f_i>0$ and at some elements $\{\pi(g_j)\}_J\subset B(1,\pi(n))$ that satisfy $\pi(g_j)<0$ and $g_j<0$ with $I$ and $J$ non-empty.
 Because $\pi(g_1)\in B(1,\pi(n))$ we have that there are no elements $\pi(h)\in B(1,\pi(n))$ such that $g_1<h<n$, $\pi(g_1)<\pi(h)<\pi(n)$ and $\pi(h)\neq -\pi(g_1)$. Because both $g_1$ and $\pi(g_1)$ are negative we conclude that $|I|=1$ and that $\pi(f_1)=-\pi(g_1)$. On the other hand assume that $|J|>1$ and take $\pi(g_j)$ and $j\neq 1$, then this is not adjacent to $\pi(n)$ because $\pi(g_j)<-\pi(g_1)<\pi(n)$ and $k'<-g<n$. We conclude that $|I|=|J|=1$ and we redefine $\pi(f_1)=:\pi(f)$
and $\pi(g_1)=:\pi(g)$.
There are at least $n+1$ elements in $B(1,\pi(n))$, in particular there are at least two $\pi(i)$ with $i<0$. One of these is $\pi(g)$, then there is at least one $\pi(i)\in B(1,\pi(n))$, $i<0$ and $\pi(i)>0$, we call $\pi(p)$  the left most of the elements with this property. We consider now the definitions of the sets $A$, $B$, $C$, $D$, $E$, $F$ and $G$ from the 1st case and adapt them to this 3rd case substituting the role of $\pi(p)$ (see Picture \ref{picturedn3}).

if $|B(1,\pi(p))\cap B(1,\pi(n))|<2$ we would have $|B(1,\pi(p))|\leq n$ proving the statement, we assume therefore $|B(1,\pi(p))\cap B(1,\pi(n))|\geq2$. We divide the study of this case in $3$ subcases:
\begin{itemize}
\item[3.a] In this subcase we consider that $\pi(p)$ is adjacent to exactly two elements in $B(1,\pi(n))$, these are $\pi(p_-)$, which is the biggest element in $D\setminus{\pi(p)}$ and the second is $\pi(p_+)\in B\cup C$; 
\item[3.b] in this subcase we consider that $\pi(p)$ is adjacent to exactly 3 elements in $B(1,\pi(n))$, these are $\pi(f)$, $\pi(g)$ and $\pi(p_+)\in B\cup C$;
\item[3.c] in this subcase we consider that $\pi(p)$ is adjacent to exactly two elements in $B(1,\pi(n))$: $\pi(f)$ and $\pi(g)$.
\end{itemize}

\textbf{Subcase 3.a}. We take $\pi(p)$ and assume that $\pi(f),\pi(g)\notin B(1,\pi(p))$.
We claim:
\begin{itemize}
    \item[3a.1] $A=-D$;
    \item[3a.2] $|E|\leq 1$;
    \item[3a.3] $B=\emptyset$;
    \item[3a.4] $|C|=1$; 

\end{itemize}
This subcase is similar to subcase $1.a$, and following the same reasoning done in that case we see that every element must be adjacent either with $\pi(i)$ or with $\pi(n)$. We obtain $3a.1$, $3a.2$ and $3a.3$ following the proof of $1a.2$, $1a.4$ and $1a.1$ respectively.

\underline{Proof of 3a.4} From the fact that $B=\emptyset$ follows that $|C|\geq 1$ because $\pi(p_+)\in C$. We want the elements in $-C$ to be adjacent to $\pi(p)$. This implies that $-\pi(p_-)<\pi(c)<-\pi(p)$ for all the $\pi(c)\in C$ except from the rightmost which may be bigger than $-\pi(p)$. We take $\pi(c)$ as the rightmost element in $C$ and study $B(1,\pi(c))\cap B(1,\pi(n))$. We see that it has cardinality at most $4$: namely $\pi(c)$ is adjacent to at most two elements in $A\cup E$, at most one element in $C\setminus {\pi(c)}$ and may be adjacent to $\pi(f)$. Notice that if $|C|>1$,  $\pi(c)$ is not adjacent to $\pi(g)$.
 Therefore there must be at most two elements not adjacent to both $\pi(c)$ and $\pi(n)$. The  fact that $\pi(c)$ is adjacent to another element in $C$ implies that is is not adjacent to any element $\pi(k)$ with $k<0$. In particular neither $\pi(n)$ nor $\pi(c)$ are adjacent to any element in $-C$ nor to $-\pi(n)$ . This implies that $|-C|\leq 1$ in particular $C=\{\pi(p_+)\}$. 
 
  The number of elements in $D\cup F\cup G\cup A$ is even and $|B(1,\pi(n))|$ is even as well,  
 it follows that $|E|=|C|=1$. We study $B(1,\pi(e))$, $\{\pi(e)\}= E$, we have that its intersection with $B(1,\pi(n))$ has cardinality $2$: it contains $-\pi(p)$ and $\pi(c)\in C$. We also notice that $-\pi(e)$ is not adjacent to $E$ and conclude that $B(1,\pi(e))\leq n$.

\textbf{Subcase 3.b}. In this subcase we consider  $B(1,\pi(i))\cap B(1,\pi(n))\supset F\cup G\cup \{\pi(p_+)\}$. In particular we have that $|D|=1$
. There must be at most one vertex in $\Gamma(\pi)$ which is not adjacent to $\pi(n)$ and $\pi(p)$. This situation is similar to the one presented in case $1.b$ and following the same reasoning we see that ${|B|\leq 1}$ (see 1b.1) and ${|E|\leq 2}$ (see 1b.2). Furthermore we claim the following:
\begin{itemize}
    \item[3b.1] $-\pi(p)\in A\Rightarrow A=-D$; 
    \item[3b.2] $|E|\leq 1, |A|= 1$;
    \item[3b.3] $|C|\leq 2$;
    \item[3b.4] $E=\emptyset$;
    \item[3b.5] $|C|\leq 1$. 
\end{itemize}

\underline{Proof of 3b.1}.
Assume $-\pi(p)\in A$, we want to see that there are no elements $\pi(k)$ such that $0<k<-p$ and $0<\pi(k)<-\pi(p)$. We notice that this is a consequence of assuming that $-\pi(p)$ is adjacent to $\pi(n)$, $\pi(g)$ is adjacent to $\pi(n)$ and that $-\pi(p)$ is adjacent to $\pi(g)$. 

\underline{Proof of 3b.2}. We consider again subcase $1.b$. We see that the proofs of $A=-D$ $\Rightarrow |E|$ $=0$ (see 1b.4) and $A\neq D \Rightarrow |E|\leq 1$ (see 1b.5)
 hold in the present subcase as well. The proof of $|A|=2$ (see 1b.6) can then be adapted to prove that $|A|=1$.
 
\underline{Proof of 3b.3}. We study the position of the elements in $C$. These must all be adjacent to $-\pi(p)$ except at most one. We consider the set $S=\pi^{-1}(F\cup A \cup E\setminus \{-\pi(p)\})$ and consider $s$ as the biggest element in $S$ smaller than $-p$ and $s'$ as the smallest element in $S$ such that $s'>-p$. In case $s$ or $s'$ don't exist we can take $s=0$ and $s'=n$. All the elements $\pi(c)\in C$ except one satisfy $s<c<s'$. Consider now the rightmost element in $C$, say $\pi(c)$ we want to study $B(1,\pi(c))\cap B(1,\pi(n))$. $\pi(c)$ is adjacent to at most one element in $C\setminus \{\pi(c)\}$, two elements in  $A\cup E$ and may be adjacent to $\pi(f)$. This means that there are at most two vertices of $\Gamma(\pi)$ not adjacent to $\pi(c)$ and $\pi(n)$. We notice that $-C$ is not adjacent to these two points and conclude that $|-C|\leq 2$.    

\underline{Proof of 3b.4} It is sufficient to notice that $-\pi(e)$ is not adjacent to $\pi(e)$ nor to $\pi(n)$ and $|B(1,\pi(e))\cap B(1,\pi(n))|=2$. 

\underline{Proof of 3b.5}. Assume that $C=\{\pi(c_1),\pi(c_2)\}$ with $c_1<c_2$. We study $B(1,\pi(c_2))\cap B(1,\pi(n))$ and see that this intersection contains at most three elements: the one in $A$, $\pi(f)$ and $\pi(c_1)$. Notice now that the elements in $-C$ are in $(B(1,\pi(c_2))\cap B(1,\pi(n)))^c$, the latter having cardinality at most $2$.

Now we compute the number of elements that may be in $B(1,\pi(n))$:
\[
|B(1,\pi(n))|\leq \underbrace{2}_{A,D}+\underbrace{2}_{G,F}+\underbrace{1}_{C}+\underbrace{1}_{B}=6.
\]
We know that we are studying the case of $n$ odd number and $|B(1,\pi(n))|>n$, so we only have to study the case of $n=5$. We notice that $\pi(c)\in C$ is not adjacent to the following elements:$\pi(c)$,$-\pi(c)$, $-\pi(b)$, $\pi(p)$
and $-\pi(n)$. We conclude that the hypothesis holds also for this case.

\textbf{Subcase 3.c}. In this subcase we are assuming $B\cup C= \emptyset$.
Notice that in this case $|D|=|F|=|G|=1$, it follows that there are at least $3$ elements in $A\cup E$.
We consider the rightmost element in $A\cup E$, say $\pi(e)$ and  notice that it is adjacent to at most one element in $B(1,\pi(n))$, namely the rightmost in $A\cup E\setminus \{\pi(e)\}$, this implies that $|B(1,\pi(e))|\leq n$ in contradiction wit the hypothesis.

\vspace{1pt}

The three cases above prove the statement for $\pi(n)>0$, yet we can notice that during the proof we didn't use the fact that the $i$s in $\{1,\ldots,n\}$ such that $\pi(i)<0$ are even. Therefore the whole reasoning applies for the case $\pi(n)<0$. 

\end{proof}

Now we are ready for the main result:
\begin{teo}\label{degdn}
The following identity holds:
\[
d_{\max}(H(D_n))=\lfloor \frac{n^2}{2}\rfloor+n-1.
\]
\end{teo}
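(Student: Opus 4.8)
The plan is to follow the strategy used for $H(B_n)$ in Theorem~\ref{teobn}: prove the upper bound $d_{\max}(H(D_n))\le\lfloor n^2/2\rfloor+n-1$ by induction on $n$, using the two propositions just established on the existence of a low-degree vertex of $\Gamma(\pi)$, and then exhibit an explicit element of $D_n$ realising this value. For the base of the induction one treats the small cases $n\le 4$ directly: since $D_3\cong A_3$ one has $d_{\max}(H(D_3))=6$ by \cite{adinroich}, and $d_{\max}(H(D_4))=11$ can be confirmed with SageMath (\cite{sage}) exactly as in the base step of Theorem~\ref{teobn}.

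For the inductive step, fix $\pi\in D_n$ with $n\ge5$ and assume the formula for $D_{n-1}$. By the preceding propositions there is a vertex $v$ of $\Gamma(\pi)$ with $d_{\Gamma(\pi)}(v)\le n+1$, and with $d_{\Gamma(\pi)}(v)\le n$ when $n$ is odd. Since $\Gamma(\pi)$ is symmetric under $i\mapsto -i$ and carries no edge joining $i$ and $-i$, the vertex $-v$ has the same degree, and deleting $v,-v$ together with their incident edges produces a graph $\hat\Gamma(\pi)$ with
\[
e(\Gamma(\pi))=e(\hat\Gamma(\pi))+2\,d_{\Gamma(\pi)}(v).
\]
Let $\pi'\in D_{n-1}$ be the element obtained from $\pi$ by deleting the value $|v|$ (and $-|v|$) and relabelling the surviving values in the order- and sign-preserving way. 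As in type $B$, deleting a value can only weaken the betweenness conditions in Proposition~\ref{propdn}, so no covering relation among the surviving values is destroyed and every edge of $\hat\Gamma(\pi)$ is an edge of $\Gamma(\pi')$; hence $e(\hat\Gamma(\pi))\le e(\Gamma(\pi'))$. In type $D$ the degree of an element in $H(D_n)$ is half the number of edges of its $\Gamma$, so
\[
d_{H(D_n)}(\pi)=\tfrac12 e(\Gamma(\pi))=\tfrac12 e(\hat\Gamma(\pi))+d_{\Gamma(\pi)}(v)\le d_{\max}(H(D_{n-1}))+d_{\Gamma(\pi)}(v).
\]
Inserting $d_{\max}(H(D_{n-1}))=\lfloor(n-1)^2/2\rfloor+n-2$ together with $d_{\Gamma(\pi)}(v)\le n+1$ for $n$ even and $d_{\Gamma(\pi)}(v)\le n$ for $n$ odd, the right-hand side equals $\lfloor n^2/2\rfloor+n-1$ in both parities, by the same elementary computation as in the proof of Theorem~\ref{teobn}. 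For the matching lower bound one exhibits an explicit extremal element: as in type $B$, a natural candidate is the signed permutation with two increasing blocks $[1,2,\dots,m,-n,-(n-1),\dots,-(m+1)]$, with $m$ chosen near $n/2$ and with $n-m$ even so that the element lies in $D_n$; counting its covers and covered elements through Proposition~\ref{propdn} yields degree exactly $\lfloor n^2/2\rfloor+n-1$ (and this can be double-checked with SageMath for small $n$).

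I expect the main obstacle to lie in the inductive step, and there in two points. First, one must make sure the reduced element really belongs to $D_{n-1}$: deleting a value changes the number of negative window-entries, so the vertex $v$ to be deleted has to be chosen so as to keep the parity of that number; when the vertex handed over by the propositions has the wrong parity, one replaces $\pi$ by its image under a suitable graph-automorphism of $H(D_n)$ (the diagram automorphism, or the complement map $\pi\mapsto w_0\pi$), which induces an isomorphism of $\Gamma(\pi)$ carrying $v$ to a vertex whose deletion preserves parity. Second, one must check carefully that deleting a value creates no new covering relations, i.e.\ that $\hat\Gamma(\pi)\subseteq\Gamma(\pi')$; this is the type-$D$ analogue of \cite[Lemma 3.2]{adinroich}, and although routine it requires running through each of the four reflection types of Proposition~\ref{propdn}. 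Once these are settled, the arithmetic and the construction of the extremal element go through as in type $B$.
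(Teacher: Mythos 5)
Your proposal follows essentially the same route as the paper's proof: induction on $n$ using the two propositions that give a vertex of $\Gamma(\pi)$ of degree at most $n+1$ (at most $n$ for $n$ odd), deletion of that vertex, the comparison $e(\hat\Gamma(\pi))\le e(\Gamma(\pi'))$, the same parity-split arithmetic, and the same two-block element for the lower bound. If anything you are more careful than the paper on two bookkeeping points it glosses over --- the factor of two between edges of $\Gamma(\pi)$ and the degree in $H(D_n)$, and the parity condition ensuring that the reduced element lies in $D_{n-1}$ (and the extremal element in $D_n$) --- so your proposal is sound and matches the paper's argument.
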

\begin{proof}
We prove the $'\leq'$ part of the statement by induction. We first take the case $n=3$, we know that $B_3$ with the strong Bruhat order is isomorphic to $A_3=S_4$ with the strong Bruhat order. By \cite{adinroich} we know that the maximal degree for $H(S_4)$ is $\lfloor (\frac{4}{2})^2\rfloor+4-2=6=\lfloor \frac{3^2}{2}\rfloor+3-1$; this concludes the base step of the induction.

Assume now that the hypothesis holds for $n-1=2m$, an even number, and let's prove it for $n$. We consider $\pi\in D_{n}$ and take the graph $\Gamma(\pi)$. Let $a$ a vertex in $\Gamma(\pi)$ with degree at most $n$. We consider $\bar{\pi}\in D_{n-1}$ as the element obtained by erasing $a$ from the window notation of $\pi$ and decreasing by
one the values greater than one. The total number of edges in $\Gamma(\bar{\pi})$ (resp. $\Gamma(\pi)$) is equal to the degree of $\bar{\pi}$ in $H(D_{n-1})$, the following bound on the number $d(\pi)$ holds:
\begin{align*}
d(\pi)&=|\{\text{number of edges in } \Gamma(\pi)\}|\\
&\leq |\{\text{number of edges in } \Gamma(\bar{\pi})\}+n\\
&\leq \lfloor \frac{2m^2}{2}\rfloor+2m-1+n.
\end{align*}
We obtain:
\begin{align*}
d(\pi)&\leq \lfloor \frac{(n-1)^2}{2}\rfloor+(n-1)-1+n \\
      & \leq \lfloor \frac{n^2-2n+1}{2}\rfloor+(n-1)-1+n\\
    & \leq \lfloor \frac{n^2+1}{2}\rfloor +n-2
      \underbrace{=}_{n \text{ is odd}}  \lfloor \frac{n^2}{2}\rfloor+n-1.
\end{align*}
This concludes the proof in the case of $n$ odd. Now we assume that $n-1$ is odd and prove the statement for $n$. We take $\pi\in D_{n}$ and let $a$ be an element in $\Gamma(\pi)$ with degree smaller than or equal to $n+1$. We define $\bar{\pi}\in D_{n-1}$ as we did in the previous case and we bound the edges of $\Gamma(\pi)$ as follows:
\[
d(\pi)\leq |\{\text{number of edges in }\Gamma(\bar{\pi})\}|+(n+1)\leq \lfloor \frac{(n-1)^2}{2}\rfloor+(n-1)-1+n+1.
\]
We obtain
\begin{align*}
d(\pi)&\leq \lfloor \frac{(n-1)^2}{2}\rfloor+(n-1)-1+n+1 \\
      & \leq \lfloor \frac{n^2-2n+1}{2}\rfloor+(n-1)-1+n+1\\
    & \leq \lfloor \frac{n^2+1}{2}\rfloor +n-1
      \underbrace{=}_{n \text{ is even}}  \lfloor \frac{n^2}{2}\rfloor+n-1.
\end{align*}
We have obtained that $d(\pi)\leq \lfloor \frac{n^2}{2}\rfloor+n-1 $ for any $\pi\in D_n$, to prove that this value is reached by an element we consider the following:
\[
[1,\ldots,m,-n,\ldots,-(m+1)]\quad \text{with }m=\lfloor \frac{n}{2}\rfloor.
\]
An easy computation shows that this element belongs to exactly $ \lfloor \frac{n^2}{2}\rfloor+n-1$ edges in $H(D_n)$.
\end{proof}

\section*{Acknowledgement}
The author would like to thank Francesco Brenti for suggesting her this problem and for many useful discussions, furthermore she acknowledges the MIUR Excellence Department Project awarded to the Department of Mathematics, University of Rome Tor Vergata, CUP E83C18000100006.

\printbibliography
\end{document}